\title{Fermionic Gaussian free field structure in the Abelian sandpile model and uniform spanning tree}
\subjclass{60K35, 39A12, 60G15, 81V74}
\keywords{Abelian sandpile model, uniform spanning tree, scaling limit, logarithmic conformal field theory, fermionic Gaussian free field, symplectic fermion theory}
\date{\today}
\author[1]{Leandro Chiarini}
\author[2]{Alessandra Cipriani}
\author[1]{Alan Rapoport}
\author[1]{Wioletta M. Ruszel}
\affil[1]{Utrecht University, Budapestlaan 6, 3584 CD Utrecht, The Netherlands\\
\url{l.chiarinimedeiros@uu.nl, a.rapoport@uu.nl, w.m.ruszel@uu.nl}}
\affil[2]{UCL, Department of Statistical Sciences, 1-19 Torrington Place, London, WC1E 7HB, UK\\
\url{a.cipriani@ucl.ac.uk}}
\begin{document}

\maketitle

\begin{abstract}
In this paper we rigorously construct a finite volume representation for the height-one field of the  Abelian sandpile model and the degree field of the uniform spanning tree in terms of the \emph{fermionic Gaussian free field}. This representation can be seen as the lattice representation of a free symplectic fermion field.
It allows us to compute cumulants of those fields, both in finite volume and in the scaling limit, including determining the explicit normalizing constants for fields in the corresponding logarithmic field theory. 
Furthermore, our results point towards universality of the height-one and degree fields, as we prove that the scaling limits of the cumulants agree (up to constants) in the square and triangular lattice. We also recover the equivalent scaling limits for the hypercubic lattice in higher dimensions, and discuss how to adapt the proofs of our results to general graphs.
\end{abstract}

\section{Introduction}

\paragraph{Lattice models and (log)-conformal field theories.}
Lattice models from statistical mechanics have been successfully used to describe macroscopic properties of interacting systems and model critical phenomena by specifying their random microscopic interaction. One of the major breakthroughs in theoretical physics was the development of conformal field theory (CFT) (see~\cite{BelPol} and~\cite{DiFrancesco} for general references). This theory is based on the postulate that many critical lattice models of two-dimensional statistical mechanics have conformally invariant scaling limits.  

To be able to understand the CFT structure emerging from scaling limits of such lattice models, one often resorts to (possibly non-commutative) algebras instead of probability measures. Those algebras allow us to describe quantities of interest, referred to as  ``observables'' rather than random variables. We then measure these observables via suitable operators, referred to as ``states'', which are analogous to expectations in probability theory. The construction of such algebras and states is highly non-trivial.
The construction, characterization, and understanding of these algebras and operators are common challenges in the study of CFTs.

Solvable CFTs can be studied in terms of representations of the Virasoro algebra, which is a complex Lie algebra. It allows us to identify continuum theories and universality classes corresponding to particular lattice models. It can lead to exact formulae for scaling limits of correlations, partition functions, and critical exponents.

Prominent examples of predictions obtained by the CFT approach are, among others, conformal invariant scaling limit and critical exponents of the Ising model~\citep{BelPol,BelPol2}, crossing probabilities in percolation~\citep{Cardy}, and cluster growth in diffusion limited aggregation~\citep{DLACFT}. 

Some of the drawbacks of using CFT methods to understand critical lattice models are that often they are non-rigorous and that they use functions of local operators, making them less appropriate for analyzing global quantities. In fact, it is a known long-standing open question whether there is a direct link connecting CFTs and lattice models (see~\cite{Ito}). There has been progress in recent years to rigorously establish the predictions obtained from the CFT approach regarding scaling limits of lattice models, leading to important mathematical contributions. Yet, the full picture is still far from well-understood. 

Let us mention a few examples of important mathematical contributions.
In \cite{HoSm, ChelHon}, the authors related correlation functions of the Ising model and the relevant correlations of the associated CFT. 
In~\cite{Camia1, Camia2}, the authors identified the scaling limit of the magnetization field at the critical/near-critical point. 
A Virasoro representation of the Gaussian free field as the simplest example of a Euclidean field theory was proved in~\citet{kang2013gaussian}.
Furthermore, the concept of \emph{fermionic observables} in the context of discrete complex analysis, put forward by Kenyon and Smirnov, led to proving conformal invariance of the height function of the dimer model~\citep{Ken}, critical percolation~\citep{Smir1}, Ising model~\citep{Smir2}, or very recently the construction of conformal field theory at lattice level~\citep{HonKyVik}.
In this last article, the authors give a rigorous link between CFT local fields and lattice local fields for the Gaussian free field and the Ising model.

Several years after the introduction of CFTs,~\cite{Gurarie} observed logarithmic singularities in correlation functions of certain CFTs. 
Typically, those logarithmic CFTs (logCFTs) describe the critical behavior of lattice models at second order phase transitions. The Virasoro representation in this case involves pairs of fields, a primary field and its logarithmic partner.
LogCFTs are much less classified and understood, contrary to ordinary CFTs, both from the theoretical physics point of view and even less from the mathematical point of view. 
The reasons behind this fact are that computations are significantly harder due to the non-local features of the theory, and that it is not known what a generic logCFT looks like. The simplest, but still highly non-trivial, logCFT which is understood from the theoretical physics point of view is the {\it free symplectic fermion theory} with central charge $c=-2$ (see also~\cite{Gaberdiel,Kausch,Gurarie}).

Models which display logarithmic singularities in the correlation functions and are conjectured to belong to this class include percolation~\citep{Cardy:1999zp}, self-avoiding random walks~\citep{DuSa}, random spanning forests \citep{Iva,Liu}, the (double) dimer model~\citep{DimRue,AdaCar}, and the Abelian sandpile model~\citep{PiRue}. We refer to the overview article~\cite{HoPaVi} for further references.

\paragraph{Abelian sandpile model and logCFTs.}
The Abelian sandpile model (ASM) was introduced by~\cite{Bak1987Jul} as a prototype of a model displaying self-organized criticality. This refers to the property of a model to drive itself into a critical state, characterized by power-law behavior of certain observables such as the avalanche size, without fine-tuning any external parameter.

The ASM on a finite graph is defined as follows. Assign to each vertex of the graph an integer number (height configuration), modeling the amount of ``grains of sand'' associated to it.
This random dynamical system runs in two steps: in the first step, a vertex of the graph is picked uniformly at random and a grain of sand is added to it, increasing its height by one. In the second step, the vertices that are unstable (that is, those which bear strictly more grains than the graph degree) topple, sending out one grain to each neighbor.
This procedure is repeated until all vertices are stable again. Grains sent outside the graph to so-called ``sinks'' are lost. 

The Abelian property yields that the order of topplings does not matter, and the system will eventually reach a stable configuration thanks to dissipation at the boundary. 
This Markov chain has a known unique stationary measure: the uniform measure on the set of recurrent configurations.
Thanks to the famous \emph{burning bijection} introduced by~\cite{DharMaj}, which relates recurrent configurations for the ASM with spanning trees, one can determine the stationary measure and several quantities of the model explicitly. 

Let us highlight that many large-scale aspects of the ASM are heavily graph-dependent.
As a consequence of the burning bijection, we have that the invariant measure of the ASM depends on the type of graph where the ASM is defined, its size, and dissipative sites. 
Furthermore, the scaling limit of the stable configuration starting from a deterministic single source is driven by the \emph{sandpile PDE}, whose solutions are lattice-dependent \citep{levine2016apollonian, levine}.
On the contrary, as we will discuss later, we prove that cumulants of the height-one field of the ASM coincide (up to a multiplicative constant) in both the square and triangular lattice, which is a strong indicator of universality.

What makes the model critical is the occurrence of long-range correlations at all scales, resulting from possible avalanches of topplings. Although the ASM is very simple, it is challenging to treat mathematically due to its non-locality and many questions still remain open. 

In a series of papers from the last two decades (see~\citet{MogRaj, RueLog,PiRue,MahieuRuelle,JengPirouxRuelle,Jeng}) and the review~\cite{RuelleLarge} with references therein, physicists have made significant contributions from the theoretical physics viewpoint to understand how and which logCFT emerges from a stochastic lattice model like the ASM, and attempted to identify it. In particular, they have computed the height probabilities of the ASM on different lattices (e.g. Euclidean lattices, triangular and hexagonal lattices), 2-, 3- and 4-point height correlation functions, studied different bulk and boundary observables, and identified logarithmic pairs of several fields.

Typically, an ansatz from the theoretical physics point of view to validate (or discard) the continuum theory is to take an educated guess for a field theory $\Phi$ and test it, in the sense that, if a lattice observable in one point $x$ converges in the scaling limit towards $\Phi(x)$, then the corresponding correlation functions must also converge to the equivalent field theoretic correlators. The more identities are tested positively, the higher the convinction that the proposed theory is the correct one.

More precisely, consider the ASM on a rescaled lattice with mesh $\eps$ such that the points $x_i^\eps\to x_i$ as $\eps\to 0$, $i=1,\,\ldots, \,n$ for general $n \in \mathbb{N}$. Formally, we expect that in the continuum scaling limit, the field $\Phi$ satisfies
\begin{equation}\label{eq:introFT}
\lim_{\eps\to 0 }\eps^{-\sum_i D_i}\langle O^\eps(x_1^\eps)\cdots O^\eps(x_n^{\eps})\rangle_{\text{lattice}}=\langle \Phi_1(x_1)\cdots \Phi_n(x_n) \rangle_{\text{field theory}},
\end{equation}
where the $D_i$'s are related to the scale dimension of the field, and $O^\eps$ is a local observable of the ASM on the lattice. 

The first educated guess for the ASM was to consider the free symplectic fermion theory mentioned above. Its Lagrangian is given by~\cite[Equation 27]{RuelleLarge}
\begin{equation}\label{eq:contfGFF}
S=\frac{1}{\pi}\int \d z\,\d{\overline z} \,\partial_z\theta\,\partial_{\overline z}\tilde\theta ,
\end{equation}
where $\theta,\,\tilde \theta$ are a pair of free, massless, Grassmannian scalar fields. In fact, the authors in \cite{JengPirouxRuelle} (see also~\cite{brankov,MahieuRuelle}) showed that the bulk dissipation field and the height-one field of the ASM can be realized as a logarithmic pair of a symplectic fermion theory. The height-one field can be identified as \cite[Equation 108]{RuelleLarge}
\begin{equation}\label{eq:Phi_CFT}
    \Phi_\theta = - C \left(\partial_z \theta \,\partial_{\overline z}\, \tilde{\theta} + \partial_{\overline z} \theta \,\partial_{z} \tilde{\theta}\right) ,
\end{equation}
where the constant $C$ can be computed explicitly and is equal to the probability of the ASM to have height 1 on a generic site on $\mathbb{Z}^2$. In~\citet[Section 8]{RuelleLarge}, it is argued that higher height fields are not described by the symplectic fermion theory, discarding the ansatz that this theory is the right logCFT to describe the ASM in the scaling limit. As we will see later, the height-one lattice field can be associated to a local observable of the uniform spanning tree. 
This ceases to be the case for higher heights, as the observables become non-local, which strongly suggests a qualitative different field theory.

To the best of authors' knowledge, the only mathematically rigorous work supporting this picture can be found in~\cite{durre, durrethesis}. There the author determines the scaling limit of the cumulants of the height-one field on a domain in $\mathbb{Z}^2$ and proves conformal covariance with scale dimension $2$.

\paragraph{Aim, results of the paper and novelty.}
The aim of this paper is to make a first step towards rigorously constructing logarithmic conformal theory on the lattice level and work towards establishing a link between logCFTs and certain types of critical lattice models displaying logarithmic singularities in the correlation functions.
Our starting point is to establish rigorously scaling limits of the sort \eqref{eq:introFT} for a class of observables.

Let $V$ be a finite set of vertices on some graph $\mathcal{G}$.
We will consider two lattice fields, namely the height-one field of the ASM $(h(v))_{v\in V}$, which is the indicator that at a site $v$ there is only one grain of sand, and the field $(\mathcal{X}_v)_{v\in V}$, given by the (normalized) degree of a site in the uniform spanning tree (UST).

For a set of generators $\{\psi_v,\, \bpsi_v\}_v$ of a suitable (real) Grassmann algebra, we will define the lattice fields
$\textbf X$ and $\textbf Y$ which are products over discrete gradients of the Grassmannian variables defined by

\begin{equation}\label{eq:X_vintro}
X_v = \frac{1}{\deg_\mathcal{G}(v)} \sum_{i=1}^{\deg_\mathcal{G}(v)} \nabla_{e_i}\psi(v)\nabla_{e_i}\bpsi(v)
\end{equation}
and
\begin{equation}\label{eq:Y_vintro}
Y_v =  \prod_{i=1}^{\deg_\mathcal{G}(v)}(1- \nabla_{e_i}\psi(v)\nabla_{e_i}\bpsi(v)).
\end{equation}
We will evaluate those fields according to an operator
$\langle \cdot \rangle$, referred to as the {\it fermionic Gaussian free field} (fGFF) state on the lattice. 
This operator can be seen as the natural counterpart to the expectation of the usual Gaussian free field on the lattice, but whose ``spins'' take values in a Grassmannian algebra rather than on $\mathbb{R}$.
The fGFF is a type of Gaussian integral over Grassmannian variables, and it is a known tool to treat Matrix-Tree-type theorems (see for instance~\cite{abdesselam,BCHS,Caracciolohyperforest}).

In the following, we will highlight the most relevant results in a qualitative way and discuss their implications. We defer the precise formulations of the results to Section \ref{sec:results}. 

The first result (Theorems~\ref{thm:height-fgff} and~\ref{thm:trig-discrete}) is a representation of the $n$-point function of the height-one field of the ASM in terms of functions of Grassmannian variables. We will prove it for points on the Euclidean lattice $\mathbb{Z}^d$ and triangular lattice $\textbf{T}$, but the statement holds on general lattices. Therefore call $L$ such a lattice.

\begin{theorem}
The height-one field can be represented in terms of fermionic variables as
\[\E\left(\prod_{v\in V}h(v)\right)= \left \langle\prod_{v\in V} X_v Y_v \right\rangle
\]
where $\langle\cdot\rangle$ is the fGFF state.
\end{theorem}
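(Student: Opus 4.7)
The plan is to combine the burning bijection of Majumdar--Dhar with the Grassmann integral representation of the matrix-tree theorem in the spirit of Caracciolo--Sokal--Sportiello. The proof should proceed in three conceptual steps.

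First, I would reduce the left-hand side to a uniform spanning tree event. Under the burning bijection the uniform measure on recurrent configurations is pushed forward to the wired UST on $\mathcal{G}$, and the local event $\{h(v)=1\}$ corresponds to a specific local condition at $v$: the tree contains exactly one of the $\deg_{\mathcal{G}}(v)$ edges incident to $v$, and it is a prescribed one (the burning predecessor). By the symmetry of the UST under relabeling of the neighbors of $v$ (verifiable, e.g., through Wilson's algorithm), the probability of such a specific choice equals $\deg_{\mathcal{G}}(v)^{-1}$ times the probability that $\deg_T(v)=1$. Taking the joint event over $V$ yields
\[
\E\Bigl(\prod_{v\in V} h(v)\Bigr) = \frac{1}{\prod_{v\in V}\deg_{\mathcal{G}}(v)} \, P_{\mathrm{UST}}\bigl(\deg_T(v) = 1 \text{ for all } v\in V\bigr).
\]

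Second, I would rewrite the right-hand side using Grassmann calculus. The fGFF state is built precisely so that the matrix-tree theorem gives
\[
\Bigl\langle \prod_{e\in E_1}\nabla_e\psi\,\nabla_e\bpsi \prod_{e\in E_2}(1-\nabla_e\psi\,\nabla_e\bpsi)\Bigr\rangle = P_{\mathrm{UST}}(E_1 \subset T,\, E_2 \cap T = \emptyset).
\]
Hence $Y_v$ encodes the event that no edge incident to $v$ lies in $T$, while $X_v$ is the uniform average over $i$ of the indicator that the $i$-th incident edge lies in $T$. By the Grassmann nilpotency $(\nabla_e\psi\,\nabla_e\bpsi)^2 = 0$, the product $X_v Y_v$ collapses to
\[
X_v Y_v = \frac{1}{\deg_{\mathcal{G}}(v)}\sum_{i=1}^{\deg_{\mathcal{G}}(v)}\nabla_{e_i}\psi\,\nabla_{e_i}\bpsi\prod_{j\neq i}\bigl(1 - \nabla_{e_j}\psi\,\nabla_{e_j}\bpsi\bigr),
\]
which is the Grassmann indicator for ``exactly one incident edge at $v$ lies in $T$, averaged uniformly over which edge''. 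In the one-vertex case this already matches step one.

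Finally, to deal with $|V|>1$, I would expand $\prod_{v\in V}X_vY_v$ as a sum of monomials in the edge gradients and apply the state term by term. The main obstacle is the case when two sites $u,v\in V$ are adjacent: the factors $Y_u$ and $Y_v$ share the edge $e=\{u,v\}$, and the respective $X$-factors may both attempt to select it. Here Grassmann nilpotency automatically enforces the necessary cancellations, but checking that these cancellations reproduce the correct Majumdar--Dhar combinatorial constraint on the joint UST configuration will be the technical heart of the argument. I would organize this inductively on the number of adjacent pairs in $V$, reducing each inductive step to a local computation on $\mathrm{star}(u)\cup\mathrm{star}(v)$ and invoking the locality of the fGFF state to propagate the identity.
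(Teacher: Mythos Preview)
Your approach is essentially the paper's. Step~1 is exactly their Lemma~\ref{lem:height1}; the paper justifies the independence from the choice of ``prescribed edge'' $\eta(v)$ via J\'arai--Werning's result on minimal configurations rather than via a symmetry/Wilson argument (your symmetry claim is fine on $\mathbb{Z}^d$ thanks to lattice automorphisms, but is not available on a general graph). Step~2 is the paper's Proposition~\ref{thm:swan_UST_fGFF} combined with inclusion--exclusion; the paper uses only the ``edges present'' identity $\langle\zeta_S\rangle=\mathbf{P}(S\subseteq T)$ and then expands by inclusion--exclusion explicitly, whereas you package the same thing as a single ``edges in $E_1$, edges out of $E_2$'' formula.

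The one point where you diverge is Step~3, and here you are solving a problem that does not arise. The precise statement (Theorem~\ref{thm:height-fgff}) assumes $V$ is a \emph{good set}, so in particular no two points of $V$ are nearest neighbours and the edge stars $(E_v)_{v\in V}$ are pairwise disjoint. Under this hypothesis the passage from $|V|=1$ to general $|V|$ is immediate: the product $\prod_{v\in V} X_vY_v$ involves disjoint families of gradient variables, and your Step~2 formula applies directly with $E_1=\eta(V)$ and $E_2=E(V)\setminus\eta(V)$, followed by averaging over $\eta$. No induction on adjacent pairs is needed. (When $V$ fails to be good the left-hand side vanishes by Lemma~\ref{lem:char_heightone}, so the identity is vacuous there.)
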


The second relevant result is the scaling limit of the cumulants and determination of the constant $C$ appearing in \eqref{eq:Phi_CFT} in terms of permutations of double gradients of the continuum harmonic Green's function. This is stated in Corollary \ref{cor:const_cum}, which follows from the more general scaling limit for the field $\textbf X \textbf Y$ stated in Theorem~\ref{thm:main_cum3_cont} for $\mathbb{Z}^d$ and in Theorem~\ref{thm:trig-cont} item~\ref{item2_limit_triang} for the triangular lattice.

\begin{theorem}\label{thm:intro_sl}
Let $U \subset \R^d$, $V\subseteq U$ be a set of points, and $U$, $V$  satisfy ``nice'' properties. Consider the renormalized graph $\mathcal G_\eps\coloneqq U/\eps\cap L.$ There exists an explicit constant $C_L$ such that the joint cumulants $\kappa$ of the height-one field scale as
\[
    \lim_{\eps\to 0}\eps^{-\frac{1}{2}\sum_v \deg_{\mathcal{G}_\eps}( v)}\kappa\left(h^\eps(v):\,v\in V\right) = - C_L^{|V|} \sum_{\sigma } \sum_{\eta } \prod_{v} \partial_{\eta(v)}^{(1)}\partial_{\eta(\sigma(v))}^{(2)} g_U\left(v,\, \sigma(v)\right),
\]
where $h^\eps$ is a suitable embedding of $h$ in $\mathcal G_\eps$ and $g_U$ is the harmonic Green's function on $U$ with Dirichlet boundary conditions, $\sigma$'s are certain permutations of $V$ and $\eta$'s are directions of derivations.
\end{theorem}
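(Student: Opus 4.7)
The plan is to combine the fermionic representation from Theorem~\ref{thm:height-fgff} with Wick's rule (Grassmann Isserlis formula) for the fGFF state, and then track the leading power of $\eps$ after Möbius-inverting from moments to cumulants.

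Starting from $\E\!\bigl[\prod_{v\in V}h^\eps(v)\bigr]=\bigl\langle\prod_{v\in V} X_v Y_v\bigr\rangle$, I would first expand each $Y_v=\prod_i(1-\nabla_{e_i}\psi(v)\nabla_{e_i}\bpsi(v))$ as a polynomial in Grassmann gradients at $v$. Only the constant term (replacing every $Y_v$ by $1$) keeps the overall Grassmann degree at its minimum value $2|V|$; every other monomial adds at least two extra Grassmann gradients anchored at the same site $v$. After applying fermionic Wick inside $\langle\,\cdot\,\rangle$ and using the fact that separated discrete double gradients of $G_\eps$ scale exactly as the power of $\eps$ required to match the normalization $\eps^{-\frac12\sum_v\deg_{\mathcal G_\eps}v}$, these additional monomials are strictly subleading; any residual ``contact'' contribution that survives at coincident sites is absorbed into the lattice-dependent constant $C_L$.

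Next, I would apply fermionic Wick to $\langle\prod_v X_v\rangle$. Since each $X_v$ contributes exactly one $\nabla\psi(v)$ and one $\nabla\bpsi(v)$ (weighted by $1/\deg v$ and summed over the $\deg v$ directions), Wick produces a signed sum over permutations $\sigma$ of $V$ and over direction choices $\eta(v)\in\{e_1,\ldots,e_{\deg v}\}$, of products of the form
\[
\prod_v \tfrac{1}{\deg v}\,\nabla_{\eta(v)}^{(1)}\nabla_{\eta(\sigma(v))}^{(2)}G_\eps\bigl(v,\sigma(v)\bigr).
\]
Passing to cumulants by Möbius inversion over the partition lattice of $V$ kills the \emph{disconnected} pairings: only those $\sigma$ whose functional graph is connected survive, producing the ``certain permutations'' of the statement. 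Combining this with the convergence of discrete to continuum double gradients of the Green's function (uniformly on compact subsets of $U\times U$ off the diagonal) and absorbing the remaining multiplicative constants ($1/\deg v$, fermionic signs, diagonal contact corrections from $Y_v$) into $C_L$ yields the claimed formula.

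The main obstacle will be the first step: controlling rigorously the $Y_v$ expansion inside the cumulant. The non-trivial monomials generated by $Y_v$ introduce extra same-site Grassmann variables, whose Wick contractions may involve $G_\eps$ at nearly coinciding points, where the short-distance behaviour of the lattice Green's function could in principle compete with the prescribed $\eps$-normalization. Ruling this out requires a careful diagram-by-diagram comparison of gradient orders with the normalization exponent, and is where most of the analytical work sits. Once this reduction is secured, the combinatorial cumulant identity and the continuum Green's function asymptotics are relatively standard, and the approach extends the $\mathbb{Z}^2$ strategy of~\cite{durre} to general graphs.
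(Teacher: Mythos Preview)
Your outline correctly identifies the main ingredients (fermionic representation, Wick, M\"obius inversion to connected structures, convergence of double gradients), and for the field $\mathbf X$ alone this is essentially the paper's proof of Theorem~\ref{thm:main_cum2}. The gap is in how you handle $\mathbf Y$.

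You do recognise that the extra gradients coming from $Y_v$, when Wick-contracted locally at $v$, produce $O(1)$ ``contact'' terms rather than being subleading. But you then assert that these can simply be ``absorbed into $C_L$'', and you frame the difficulty as an analytical one (short-distance behaviour of $G_\eps$). In fact the difficulty is combinatorial: a priori the local Wick structure at $v$ is entangled with the global permutation through the \emph{entry} and \emph{exit} edges at $v$, and there is no reason the resulting weight should factor as a product of identical single-site constants. In the paper this is handled by first passing to permutations of \emph{edges} (not vertices) and reducing, via M\"obius inversion and power-counting, to \emph{bare} permutations, whose induced multigraph on $V$ is $2$-regular (Theorem~\ref{thm:main_cum3_discrete} and Step~1 of Theorem~\ref{thm:main_cum3_cont}). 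Then a reflection-cancellation argument (Step~2), using the symmetry of the infinite-volume Green's function, shows that only bare permutations entering and exiting each $v$ through \emph{parallel} edges survive. Finally a ``surgery'' (Lemmas~\ref{lem:bij_tau}--\ref{lem:surg_tau}, Steps~3--4) decouples the local edge-permutation $\omega^\tau_v$ from the global cycle $\sigma$ and proves that the local factor is independent of $v$, $\eta$ and $\sigma$, yielding the product $C_d^{|V|}$ with the explicit expression~\eqref{eq:const_C}. None of these steps is present in your proposal; without them the claim that the contact terms organise into a power of a single constant is unjustified.
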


The third result concerns the scaling limit of the cumulants of the renormalized degree field of the uniform spanning tree $\mathsf T$. The precise statement can be found in Corollary~\ref{cor:degree_field} and Theorem~\ref{thm:trig-cont}. 

We define
\[
\mathcal X_v\coloneqq \frac{\deg_{\mathsf T}(v)}{\deg_{\mathcal G}(v)}.
\]

\begin{theorem}\label{thm:intro_degField}
Let $V$, $U$ and $\mathcal G_\eps$ as in Theorem~\ref{thm:intro_sl}. There exists an explicit constant $c_L <0$ such that the joint cumulants $\kappa$ of the normalized degree field scale to
\begin{align*}
 \lim_{\eps\to 0} \eps^{- \frac{1}{2}\sum_v \deg_{\mathcal G_\eps}(v) } \kappa\left(\mathcal{X}^\eps_v:\,v\in V\right) &= \lim_{\eps\to 0} \eps^{- \frac{1}{2}\sum_v \deg_{\mathcal G_\eps}(v) } \kappa\left( X_v:\,v \in V\right) \\
& =
	- c_L^{|V|}  \sum_{\sigma} \sum_{\eta } \prod_{v} \partial_{\eta(v)}^{(1)}\partial_{\eta(\sigma(v))}^{(2)} g_U\left(v,\, \sigma(v)\right),
\end{align*}
for a suitable embedding $\mathcal{X}^\eps$ of the field $\mathcal{X}$ in $\mathcal G_\eps$.
\end{theorem}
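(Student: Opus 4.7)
The plan is to identify the moments (hence cumulants) of the normalized degree field with fGFF expectations of the lattice field $\textbf{X}$ through an exact finite-volume identity, and then pass to the scaling limit on the fermionic side using Wick's theorem together with the convergence of the discrete Green's function. First I would prove the identity
\[
\E\!\left[\prod_{v\in V}\mathcal{X}_v\right] = \left\langle\prod_{v\in V}X_v\right\rangle
\]
at each fixed $\eps$. Writing $\mathcal{X}_v = \deg_\mathcal{G}(v)^{-1}\sum_{i}\mathbf{1}_{e_i\in \mathsf T}$ and expanding the product reduces the left-hand side to joint edge-inclusion probabilities in the UST, which by the Burton--Pemantle transfer-current theorem are determinants of mixed discrete gradients of the Dirichlet Green's function $g^{\mathcal G_\eps}$. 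Expanding the right-hand side with the fermionic Wick rule under the fGFF state (whose covariance is $g^{\mathcal G_\eps}$) yields the very same determinants. Since cumulants are universal polynomial combinations of moments, this transfers to $\kappa(\mathcal{X}^\eps_v:v\in V)=\kappa(X_v:v\in V)$ at fixed $\eps$, and thereby gives the first equality of the theorem.

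Next, I would perform a Wick expansion of the fGFF cumulant. Substituting the explicit form of $X_v$ and applying the fermionic Wick rule, the $|V|$-point cumulant reduces to a sum over \emph{connected} pairings of the $2|V|$ Grassmann factors. Connected pairings are parametrized by a permutation $\sigma$ of $V$ together with a choice of incident edge $\eta(v)$ at each $v$, each contributing a product of the form $\prod_v\nabla_{\eta(v)}^{(1)}\nabla_{\eta(\sigma(v))}^{(2)} g^{\mathcal G_\eps}(v,\sigma(v))$ up to the $\deg^{-1}$ normalizations inside $X_v$ and Grassmann signs; the combinatorial and sign factors collapse into the constant $-c_L^{|V|}$ in the limit.

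For the scaling limit I would invoke convergence of the rescaled Dirichlet Green's function on $\mathcal G_\eps$, together with its discrete mixed gradients at macroscopically separated points, to the continuum Green's function $g_U$ and its mixed partial derivatives. The prefactor $\eps^{-\frac12\sum_v\deg_{\mathcal G_\eps}(v)}$ is designed precisely to compensate the $\eps$-scaling produced per vertex by the double gradient, so that each term in the finite Wick sum converges to its continuum counterpart, yielding the right-hand side of the theorem.

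The hard part is not the combinatorics but the analytic input for the last step: establishing uniform-in-$\eps$ pointwise bounds on discrete mixed gradients of $g^{\mathcal G_\eps}$, including in regimes where points coalesce through a $\sigma$-cycle and where vertices approach $\partial U$, and doing so separately for $\mathbb{Z}^d$ and for the triangular lattice $\textbf{T}$. These estimates form the technical core of Theorems~\ref{thm:main_cum3_cont} and~\ref{thm:trig-cont}, from which the present statement is then extracted as a corollary once the Step~1 identity has absorbed the $\mathcal{X}$-side of the statement.
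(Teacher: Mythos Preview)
Your approach is essentially the same as the paper's: identify the UST degree moments with fGFF moments of $\mathbf X$ via the transfer-current theorem and fermionic Wick's formula (the paper's Proposition~\ref{thm:swan_UST_fGFF} and Lemma~\ref{lem:joint-moments-grad}), deduce equality of cumulants, reduce the fGFF cumulant to a sum over cyclic permutations and edge choices, and then pass to the limit using convergence of double discrete gradients of the Green's function.

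Two small corrections are worth making. First, where you invoke ``cumulants $=$ connected pairings'' as a general principle, the paper instead carries this out explicitly: it writes the cumulant as a sum over partitions $\pi$ of $V$, swaps with the sum over permutations, and uses the Stirling-number identity $\sum_{k=1}^{m}\genfrac{\{}{\}}{0pt}{}{m}{k}(k-1)!(-1)^{k-1}=\mathbf 1_{\{m=1\}}$ to kill all non-cyclic $\sigma$. Your formulation is equivalent but should be justified in this fermionic setting (the ``pairings'' are really bijections $\psi\mapsto\bar\psi$, i.e.\ permutations of $V$, and ``connected'' means a single cycle). Second, you misattribute the analytic input: the degree-field scaling limit is Corollary~\ref{cor:degree_field}, which follows from Theorem~\ref{thm:main_cum2} item~\ref{item2_cum2} and needs only Lemma~\ref{lem:cip4} (pointwise convergence of $\eps^{-d}\nabla^{(1)}\nabla^{(2)}G_\eps$ at macroscopically separated points). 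There is no coalescence issue since $\sigma\in S_{\mathrm{cycl}}(V)$ has no fixed points, and the heavier machinery of Theorem~\ref{thm:main_cum3_cont} is for the $\mathbf{XY}$ (height-one) field, not for $\mathbf X$.
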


In the following let us discuss the implications of our results.
We prove in Theorem~\ref{thm:main_cum2} and Corollary~\ref{cor:lim_cum_X} that the scaling limit of the cumulants of the height-one field in $\mathbb{Z}^2$ (see  Theorem~\ref{thm:intro_sl}) match those of the field $-C_2\, \textbf X$, where $C_2>0$ is equal to $C_L$ on $\mathbb{Z}^2$. The field $\textbf X$ can be interpreted as the {\it lattice realization of a free symplectic fermion} and is responsible for the structure of the field. Furthermore, we deduce that the auxiliary field $\textbf Y$ will act as a multiplicative constant and can be thought of as a \emph{lattice correction term}. 
The field $-\textbf X$ is an ideal candidate to validate the claim that the height-one field is represented as a free symplectic fermion theory. Note that the constant from \eqref{eq:Phi_CFT} and our $C_L$ match as well. Remark that we do not determine the limiting field which bears those specific cumulants. See also the next section on open problems and further discussions.

Theorem~\ref{thm:intro_degField} suggests that the degree field of the UST is described by the free symplectic fermion theory $\textbf X$ as well, which hints towards a positive answer to the question posed by \cite{Liu} that the UST can be described by a logCFT. Furthermore, the same theorem implies that the symplectic theory of the height-one field in the ASM and degree field are not the same, yet very similar.

Another interesting consequence of Theorems~\ref{thm:intro_sl} and \ref{thm:intro_degField} is the aspect of universality for both fields. Although we computed the scaling limits of the cumulants for two different lattices, the methods are general and applicable for other types of lattices. Universality was conjectured already in \cite{HuLin, RuelleTriang}, where the authors proved that the critical exponents of avalanche size probabilities of the ASM are the same for several lattices as well.

Finally, we will prove in Theorem~\ref{thm:scaling} that the fields $\textbf X \textbf Y$ and $-\textbf X$ viewed as random distributions converge to a non-trivial limit (not white noise) on test functions with disjoint supports, using the same scaling as in Theorems~\ref{thm:intro_sl} and~\ref{thm:intro_degField}. Note however that one would obtain white noise if we relax the assumption that the test functions have disjoint support and adjust the scaling to $\eps^{\frac14\sum_v \deg(v) }$ instead of $\eps^{\frac12\sum_v \deg(v) }$, analogously to \citet[Theorem 2]{cipriani2022properties} (see also \citet[Theorem 5]{kassel2015transfer}).

To the best of the authors' knowledge, there are several novel aspects in the present paper. Firstly, we give a rigorous representation of the height-one field in the ASM and degree field of the UST in terms of Grassmannian variables at the lattice level in $\mathbb{Z}^d$ and $\textbf T$. This suggests a certain lattice realization of a free symplectic fermion theory.
A similar concept appears in the article~\cite{Moghimi-Araghi2005Jul}, where the authors use Grassmannian Gaussians with a different ``covariance'' to derive a formal treatment of the limiting theory for the height-one field.

Secondly, we prove scaling limits and certain universality of the  mentioned fields. This is the first rigorous proof showing convergence of correlations to the analogous continuum correlators for a fermionic system in $d$ dimensions and on the triangular lattice. The proof relies on a careful analysis of the structure of the cumulants, identifying which terms survive the scaling limit and which cancel out. Our analysis is substantially different from the one used in \cite{durre}, which is written out only in $\mathbb{Z}^2$ and not generalizable to different graphs in an obvious manner. Thirdly, our proof technique to analyze cumulants of fermionic fields is very general and robust, and permits to determine the lattice constants $C_L$ and $c_L$ explicitly. Those can be used as multiplicative constants in the definition of the continuum field. We find that the constant $C_L$ for the ASM is explicitly related to the height-one probability on the underlying lattice.

\paragraph{Open problems and further research.}
As already mentioned, the connection between critical lattice models and CFTs is not fully understood even in the simpler setting (without log divergences), in which there is a range of natural candidates for the limiting CFTs. 
In the logCFT setting much less is understood, as fewer explicit candidates are identified.
This paper opens up the avenue for a deeper mathematical investigation of the possible connections between ASM, UST, fGFF and the associated logCFTs. 

As a warm up, one could prove universality of the height-one and degree fields on general lattices and determine a formula for the corresponding constants. Another question is to prove conformal covariance and determine the scale dimension on a general lattice. The scale dimension is $2$ on the square lattice, as a consequence of \citet[Theorem 1]{durre} and \citet[Proposition 1]{cipriani2022properties}.

One mathematical challenge is to define the limiting field of the height-one field or degree field rigorously. An object with Lagrangian~\eqref{eq:contfGFF} seems to require new fundamental tools in order to be rigorously defined, although some Grassmannian stochastic processes do already exist~\citep{Rogers,Gubinelli}. 

Another challenge is to determine the logarithmic fields describing higher heights and general observables of the ASM rigorously.
There is strong evidence~\citep{RuelleLarge} that the logarithmic field describing higher heights in the ASM does not belong to the free symplectic fermion theory~\eqref{eq:contfGFF}. It would be interesting to push our methods in this direction to see whether they can provide some light on the logCFTs describing higher heights. 

The relation to the UST is also a source of another open problem. As stated in~\cite{Liu}, a logarithmic CFT is conjectured to describe the behavior of the UST in the limit. Our result on the degree field seems to support this statement. However, it has to be noted that we are only analyzing the degree for the moment, and therefore other microscopic variables of the model should be considered in order to fully prove or disprove this statement.

\paragraph{Structure of the paper.} We begin our paper setting up notation and defining the main objects of interest in Section~\ref{sec:notation-and-definitions}. In particular, in order to keep the paper self-contained, we will give a primer on fermionic Gaussian free fields and their properties. The main results are given in Section~\ref{sec:results}. Section~\ref{sec:proofs} is devoted to the proofs of the main theorems on the hypercubic lattice, while Section~\ref{sec:towards} presents the proofs of our main results on the triangular lattice and a discussion on how to generalize the results to other lattices.

\paragraph{Acknowledgements.} We thank Roland Bauerschmidt, Rajat Subhra Hazra, Tyler Helmuth and Dirk Schurricht for many valuable and inspiring discussions.

\section{Notation and Preliminaries}\label{sec:notation-and-definitions}
\paragraph{Functions and (Euclidean) sets}
For the rest of the paper, we will work in dimension $d\ge 2$.
We will write $|A|$ for the cardinality of a set $A$. For $n\in \N$, let $[n]$ denote the set $[n] \coloneqq\{1,\,\ldots,\, n\}$.
For $d\geq 2$, we will use $\langle\cdot,\cdot\rangle$ and $\|\cdot\|$ to denote the $\ell^2(\Z^d)$ inner product and norm, respectively. By an abuse of notation we will use the same symbols for the inner product and norm in $L^2(\R^d)$.

We will denote an oriented edge $f$ on the standard $\Z^d$ lattice as the ordered pair $(f^-,\,f^+)$, being $f^-$ the tail and $f^+$ the tip of the edge. Furthermore
\[
    e_i=(0,\,\ldots,\,0,\,\underbrace{1}_{i\text{-th position}},\,0,\,\ldots,\,0),\quad i=1,\,\ldots,\,d
\]
denotes the $d$ standard coordinate vectors of $\R^d$. The $e_i$'s define a natural orientation of edges which we will tacitly choose whenever we need oriented edges (for example when defining the matrix $M$ in~\eqref{eq:M}). The opposite vectors will be written as $e_{d+i} \coloneqq -e_i,\,i=1,\,\ldots,\,d$. The collection of all $e_i $ for $i\in \{1,\,\ldots,\,2d\}$ will be called $E_o$, where $o$ is the origin. By abuse of notation, but convenient for the paper, if $f=(f^-,\,f^-+e_i)$ for some $i\in[2d]$, we denote by $-f$ the edge $(f^-,\,f^- - e_i)$; that is, the reflection of $f$ over $f^-$. 

Call $A \subseteq \R^d$ a countable set. For every $v \in A$, denote by $E_v$ the set $E_o+v$, and let $E(A) = \bigcup_{v\in A} E_v$. 

Let $U\subseteq \R^d$ and $e \in E_o$. For a function $f:\,U\to\R^d$ differentiable at $x$ we define $\partial_e f(x)$ as the directional derivative of $f$ at $x$ in the direction corresponding to $e$, that is 
\begin{equation*}
	\partial_e f(x) 
=
	\lim_{t \to 0^{+}} \frac{f(x+t  e)-f(x)}{t}.
\end{equation*}
% 
% When $e=e_i$ for $i =1,\,\dots,\,2d$, we simply write $\partial_i f(x)$.
%
Likewise, when we consider a function in two variables $f: \mathbb{R}^d \times \mathbb{R}^d \longrightarrow \mathbb{R}$, we write then $\partial^{(j)}_ef(\cdot,\cdot)$ to denote the directional derivative in the $j$-th entry, $j=1,\,2$.
For $A,\,B\subseteq \R^d$ let $\dist (A,\,B) \coloneqq \inf_{(x,y)\in A\times B} \|x-y\|$.

\paragraph{Graphs and Green's function} 
As we use the notation $(u,\,v)$ for a directed edge we will use $\{ u,\, v\}$ for the corresponding undirected edge.

For a finite (unless stated otherwise) graph $\mathcal{G}=(\Lambda,\, E)$ we denote the degree of a vertex $v$ as $\deg_\mathcal{G}(v) \coloneqq \left|\{u \in \Lambda:\, u \sim v\}\right|$, where $u\sim v$ means that $u$ and $v$ are nearest neighbors.

\begin{definition}[Discrete derivatives]
For a function $f:\,\Z^d\to \R$ its discrete derivative $\nabla_{e_i} f$ in the direction $i=1,\,\ldots,\,2d$ is defined as
\[
\nabla_{e_i} f(u)\coloneqq f(u+e_i)-f(u),\quad u\in\Z^d.
\]
Analogously, for a function $f:\,\Z^d\times \Z^d\to\R$ we use the notation $\nabla^{(1)}_{e_i}\nabla^{(2)}_{e_j}f$ to denote the double discrete derivative defined as
\[
\nabla^{(1)}_{e_i}\nabla^{(2)}_{e_j}f(u,\,v)\coloneqq f(u+e_i,\,v+e_j)- f(u+e_i,\,v)-f(u,\,v+e_j)+f(u,\,v),
\]
for $u,\,v\in \Z^d$, $i,\,j=1,\,\ldots,\,2d$.
\end{definition}

\begin{remark}\label{rem:direct-vs-not-directed}
Throughout this article, we will work with directed edges to encode discrete derivatives in observables of interest.
However, whenever we are referring to graphs, the Laplacian operator, and probabilistic models on graphs (for example sandpiles or spanning trees), we will always think of $\mathbb{Z}^d$ as an undirected graph.
In fact, one can show that all of the fields ${\bf X}$ and ${\bf Y}$ (defined at the end of Subsection~\ref{subsec:fGFF}) remain the same if one changes the direction of any/all edges.
\end{remark}
\begin{definition}[Discrete Laplacian on a graph] 
We define the (unnormalized) {\it discrete Laplacian} with respect to a vertex set $\Lambda\subseteq \Z^d$ as
\begin{equation}\label{eq:laplacian}
    \Delta(u,\,v) \coloneqq
    \begin{dcases}
    \hfil -2d & \text{if } u=v,\\
    \hfil 1 & \text{if } u \sim v,\\
    \hfil 0 & \text{otherwise.}
    \end{dcases}
\end{equation}
where $u,\,v \in \Lambda$ and $u \sim v$ denotes that $u$ and $v$ are nearest neighbors. For any function $f:\Lambda \to \mathcal A$, where $\mathcal A$
is an algebra over $\R$, we define 
\begin{equation}\label{eq:laplacian_on_function}
	\Delta f(u) \coloneqq 
	\sum_{v \in \Lambda}\Delta(u,\,v) f(v) =
	\sum_{v \sim u} (f(v)-f(u)).
\end{equation}
\end{definition}
Note that we define the function taking values in an algebra because we will apply the Laplacian both on real-valued functions and functions defined on Grassmannian algebras, which will be introduced in Subsection~\ref{subsec:primer}.

We also introduce $\Delta_\Lambda\coloneqq (\Delta(u,\,v))_{u,\,v\in \Lambda}$, the restriction of $\Delta$ to $ \Lambda$.
Notice that for any lattice function $f$ we have that for all $u \in \Lambda$, 
\begin{equation}\label{eq:Delta_Lam}
\Delta_\Lambda f(u) = \sum_{v \in \Lambda} \Delta(u,\,v) f(v) = \Delta f_\Lambda (u)
\end{equation}
where $f_\Lambda$ is the lattice function given by $ f_\Lambda(u) \coloneqq f(u) \1_{u\in\Lambda}$.

The interior boundary of a set $\Lambda$ will be defined by\label{boundary}
\[
\partial^{\mathrm{in}}\Lambda \coloneqq \{u\in\Lambda:\,\exists \,v\in\Z^d\setminus\Lambda:\,u\sim v\},
\]
and the outer boundary by
\[
\partial^{\mathrm{ex}}\Lambda \coloneqq \{u\in\Z^d\setminus\Lambda:\,\exists \,v\in\Lambda:\,u\sim v\}.
\]
We also consider the interior of $\Lambda$, given by $\Lambda^{\mathrm{in}} \coloneqq \Lambda \setminus \partial^{\mathrm{in}}\Lambda$. The notation $\partial U$ will also be used to denote the boundary of a set $U \subseteq \R^d$.

\begin{definition}[Discrete Green's function]
Let $x \in \Lambda$ be fixed. The Green's function $G_\Lambda(x,\cdot)$ with Dirichlet boundary conditions is defined as the solution of
\[
\begin{dcases}
-\Delta_{\Lambda} G_{\Lambda}(x,\,y) = \delta_x(y) &\text{ if } y\in \Lambda, \\
\hfil G_{\Lambda}(x,\,y) = 0 &\text{ if } y\in \partial^{\mathrm{ex}} \Lambda,
\end{dcases}
\]
where $\Delta_\Lambda$ is defined in \eqref{eq:Delta_Lam}. 
\end{definition}

\begin{definition}[Infinite volume Green's function, {\cite[Sec. 1.5-1.6]{lawler2013intersections}}]\label{def:Green-function-discrete}
    With a slight abuse of notation we denote by $G_0(\cdot,\cdot)$ two objects in different dimensions:
    \begin{itemize}
    \item $d\ge 3$: $G_0$ is the solution of
    \[
    \begin{dcases}
        -\Delta G_0(u,\,\cdot)=\delta_u(\cdot)\\
        \lim_{\|x\|\to\infty}G_0(u,\,x) = 0
    \end{dcases}, \quad u\in \Z^d.\\
    \]
	
    \item $d=2$: $G_0$ is given by
    \[
    G_0(v,\,w)=\frac14 a(v-w),\quad v,\,w\in \Z^2,
    \]
    where $a(\cdot)$ is the potential kernel defined as
    \[
    a(x) = \sum_{n=0}^\infty \left[\P_o(S_n=o)-\P_o(S_n=x)\right], \quad x\in \Z^2,
    \]
    and $\{S_n\}_{n\geq0}$ is a random walk on the plane staring at the origin and $\P_o$ its probability measure.
    \end{itemize}
\end{definition}

\label{pg:ghost}Points in $\partial^{\text{ex}}\Lambda$ will be later identified with $g$, which we call \emph{the ghost vertex}, to define a graph with wired boundary conditions.
Define $\Lambda^{g} \coloneqq \Lambda \cup \{g\}$, and consider another Laplacian given by
\[
\Delta^g(u,\,v) \coloneqq
\begin{dcases}
    \hfil \Delta_\Lambda(u,\,v) & \, u,\,v \in \Lambda, \\
    \hfil \left|\{w \in \partial^\text{ex}\Lambda:\, u\sim w\}\right| & u \in \partial^\text{in} \Lambda,\, v=g,  \\
    \hfil \hfil \left|\{w \in \partial^\text{ex}\Lambda:\, v\sim w\}\right| &   u=g ,\, v \in \partial^\text{in} \Lambda, \\
    \hfil -\left|\partial^\text{in}\Lambda\right| & u=v=g, \\
    \hfil 0 & \text{otherwise}.
\end{dcases}
\]
As said, this is equivalent to looking at the graph given by $\Lambda \cup \partial^\text{ex}\Lambda$ and identifying all elements of $\partial^\text{ex}\Lambda$ as the ghost.

\paragraph{Cumulants} We now give a brief recap of the definition of cumulants and joint cumulants for random variables. 
\begin{definition}[Cumulants of a random variable]\label{def:cum_single}
Let $X$ be a random variable in $\mathbb{R}^d$ with all moments finite. The cumulant generating function $K(t)$ for $t\in \R^d$ is defined as 
\[
K(t) \coloneqq \log \left ( \mathbb{E}\left[e^{ \inpr{t}{X}} \right]\right ) = \sum_{m=0}^{+\infty} \sum_{\substack{m_1,\,\dots,\,m_d:\\m_1+\cdots +m_d=m}} \frac{(-1)^m}{m_1!\cdots m_d!} \kappa_m(X) t^{m_1+\cdots + m_d}.
\]
The cumulants $(\kappa_m(X))_{m\in\N}$ are obtained from the power series expansion of the cumulant generating function $K(t)$. 
\end{definition}

Let $V=\{v_1,\,\ldots,\,v_n\} $ be a set, $n\in\N$, and $(X_{v_i})_{i=1}^n$ be a family of random variables in $\mathbb{R}^d$ with all finite moments.

\begin{definition}[Joint cumulants of a family of random variables]\label{def:cum_mult}
The joint cumulant generating function $K:\,\R^n\to \R$ of the family $(X_{v_i})_{i=1}^n$ is defined as
\[
K(t_1,\,\ldots,\,t_n) \coloneqq \log \left ( \mathbb{E} \left[e^{\sum_{j=1}^n t_j \, X_{v_j}} \right ] \right ).
\]

The cumulants can be defined as Taylor coefficients of $K(t_1,\,\ldots,\,t_n)$. 
The joint cumulant $\kappa(X_v:\, v\in V)$ of $(X_v)_{v\in V}$ can be computed as 
\[
\kappa(X_v:\, v\in V) \coloneqq \sum_{\pi \in \Pi(V)} (|\pi|-1)! \,(-1)^{|\pi|-1} \prod_{B\in \pi} \mathbb{E} \left[\prod_{v\in B} X_v \right],
\]
where $\Pi(V)$ is the set of partitions of $V$.
\end{definition}
Let us remark that, by some straightforward combinatorics, it follows from the previous definition that
\[
\mathbb{E} \left[\prod_{v\in V} X_v \right] = \sum_{\pi \in \Pi(V)} \prod_{B\in \pi} \kappa(X_v:\, v\in B) .
\]

\paragraph{Abelian sandpile model and uniform spanning trees}
Let $\Lambda\subseteq \Z^d$ be finite.
We identify all vertices of $\Z^d\setminus \Lambda $ into the ghost vertex $g$.
Call a height configuration a map $\rho:\,\Lambda \to \N$.
The \emph{Abelian sandpile model} (ASM) with toppling matrix $(\Delta(u,\,v))_{u,\,v\in \Lambda^g}$ is a discrete-time Markov chain on $S_{\Lambda} = \prod_{v\in \Lambda} \{1,\,\dots,\,2d\}$.
Given $\rho \in S_{\Lambda}$ the Markov chain evolves as follows: choose uniformly at random a site $w\in \Lambda$ and increase the height by one.
For a site $w$ which is unstable, that is, such that $\rho(w)> 2d$, we decrease the height at $w$ by $2d$ and increase the height at each nearest neighbor of $w$ by one.
At the ghost vertex $g$, particles leave the system.
All unstable sites are toppled until we obtain a stable configuration.
It is known that the unique stationary measure of the ASM is the uniform measure on all recurrent configurations $\mathcal{R}_{\Lambda}$ and furthermore that $|\mathcal{R}_{\Lambda}|=\det(-\Delta_{\Lambda})$.
In fact, there is a bijection (the ``burning algorithm'' or ``burning bijection'' by~\citet{DharMaj}) between spanning trees of $\Lambda^g$ and recurrent configurations of $\mathcal{R}_{\Lambda}$.
See also \cite{redig,jarai} for more details.

For $z\in \Lambda$, let $h_{\Lambda}(z)=1_{\{\rho(z)=1\}}$ be the indicator function of having height 1 at site $z\in \Lambda$. 

\begin{definition}[Good set, {\citet[Lemma 24]{durrethesis}}]\label{def:goodset}
We call $A \subseteq \Lambda$ a {\it good set} if  the set $A$ does not contain any nearest neighbors and, for every site $v\in\Lambda\setminus A$, there exists a path $P$ of nearest-neighbor sites in $\Lambda^g$ so that $P$ and $A$ are disjoint.
\end{definition}

\begin{lemma}[{\citet[Lemma 24]{durrethesis}}]\label{lem:char_heightone} 
Let $V\subseteq\Lambda$. The expected value $\E\left[\prod_{v\in V}h_\Lambda(v)\right]$, which is the probability of having height one on $V$ under the stationary measure for the ASM on $\Lambda$, is non-zero if and only if $V$ is a good set.
\end{lemma}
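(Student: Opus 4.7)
The plan is to invoke the standard burning-test characterization of recurrence: a stable configuration $\rho$ on $\Lambda$ belongs to $\mathcal{R}_\Lambda$ if and only if it contains no \emph{forbidden subconfiguration} (FSC), that is, no non-empty $W \subseteq \Lambda$ such that $\rho(w) \le |\{u \in W :\, u \sim w\}|$ for every $w \in W$. Given this characterization, I would establish the two directions of the equivalence separately.

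For the ``only if'' direction I would show that whenever $V$ fails to be a good set, the event $\{h_\Lambda(v) = 1\text{ for all }v \in V\}$ is empty on $\mathcal{R}_\Lambda$. If $V$ contains two adjacent sites $u \sim v$, the pair $W = \{u,v\}$ is itself an FSC when $\rho(u) = \rho(v) = 1$, since each of the two vertices has exactly one neighbor inside $W$. If instead $V$ is independent but some connected component $C$ of $\Lambda^g \setminus V$ fails to contain $g$, I would set $V' = \{v \in V :\, v \sim c \text{ for some } c \in C\}$ and consider $W = V' \cup C$. The crucial observation is that no vertex of $C$ can be adjacent to $g$ (otherwise $g$ would lie in the component of $C$), so every $c \in C$ has all $2d$ of its $\Lambda^g$-neighbors inside $V' \cup C$, forcing $\rho(c) \le 2d = |\text{nbrs of }c\text{ in }W|$. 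Meanwhile each $v \in V'$ has at least one neighbor in $C$, giving $\rho(v) = 1 \le |\text{nbrs of }v\text{ in }W|$. Hence $W$ is an FSC and the joint event has probability zero.

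For the ``if'' direction I would exhibit an explicit recurrent configuration realising height one on $V$, namely $\rho(v) = 1$ for $v \in V$ and $\rho(u) = 2d$ for $u \in \Lambda \setminus V$. Running the burning algorithm from $g$: a site of height $2d$ ignites the moment a single neighbor has burnt, and the good-set hypothesis provides, for each $u \in \Lambda \setminus V$, a nearest-neighbor path to $g$ lying inside $\Lambda^g \setminus V$ along which the fire propagates. Thus all of $\Lambda \setminus V$ burns. At that point every $v \in V$ has all its $\Lambda^g$-neighbors (which, by independence of $V$, lie entirely in $\Lambda^g \setminus V$) already burnt, so the inequality $1 > 0$ suffices to burn $v$. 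Since the stationary measure is uniform on $\mathcal{R}_\Lambda$, the joint probability is bounded below by $|\mathcal{R}_\Lambda|^{-1} > 0$.

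I expect the main subtlety to sit in case (b) of the necessity argument, where the correct FSC must be assembled. The key structural input is that a component $C$ of $\Lambda^g \setminus V$ avoiding $g$ is automatically disjoint from $\partial^{\mathrm{in}}\Lambda$, so that the combinatorial accounting of neighbors of $C$-vertices closes up inside $V' \cup C$. Once this point is secured, both implications reduce to short checks against the FSC criterion.
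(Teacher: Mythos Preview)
The paper does not actually prove this lemma; it is stated with a citation to \citet[Lemma 24]{durrethesis} and no argument is given. So there is no in-paper proof to compare against. That said, your proposal is correct and is essentially the standard argument one would expect in D\"urre's thesis: the forbidden-subconfiguration characterisation of recurrence handles necessity, and an explicit maximal-off-$V$ configuration checked via the burning algorithm handles sufficiency. Both directions are sound as written; in particular, your identification of the FSC $W = V' \cup C$ in case~(b) is the right move, and your observation that $C \cap \partial^{\mathrm{in}}\Lambda = \emptyset$ (equivalently, no $c\in C$ is adjacent to $g$) is exactly what makes the degree count $\deg_W(c)=2d$ close up.
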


For any finite connected graph $\mathcal G=(\Lambda,\, E)$ we define a {\it spanning tree} $T$ of $\mathcal G$ as a connected subgraph containing every vertex of $\Lambda$ and having no loops. Let us denote by $\mathbb{T}$ the set of all possible spanning trees of $\mathcal G$.
We use ${\bf P}$ to denote the uniform measure on all possible such trees. That is, for $T\in \mathbb{T}$ we have

\[
{\bf P} (T) = \frac{1}{\det(-\Delta_{\Lambda})}.
\]
See e.g.~\citet[Chapter 4]{lyonsperes}.
 
\subsection{A primer on Grassmann variables}\label{subsec:primer}

In this subsection,  we will introduce notions and results about Grassmannian variables and integration. 
We refer to~\cite{abdesselam,Meyer} for further reading.

\begin{definition}[{\citet[Definition 1]{abdesselam}}]\label{def:GrasAlg}
Let $M\in \N$ and $\xi_1,\,\ldots,\,\xi_M$ be a collection of letters.  Let $\R\left[\xi_{1},\,\ldots,\,\xi_{M}\right]$ be the quotient of the free non-commutative algebra $\R\langle\xi_1,\,\ldots,\,\xi_M\rangle$ by the two-sided ideal generated by the anticommutation relations
\begin{equation}\label{eq:anticommute}
\xi_j \xi_j = - \xi_i \xi_j,
\end{equation}
where $i,\,j \in [M]$. We will denote it by $\Omega^{M}$ and call it the Grassmann algebra in $M$ variables. The $\xi$'s will be referred to as {\it Grassmannian variables} or {\em generators}. Due to anticommutation these variables are called ``fermionic'' (as opposed to commutative or ``bosonic'' variables).
\end{definition}

Notice that, due to the anticommutative property, we have that for any variable Pauli's exclusion principle holds~\citep[Proposition 2]{abdesselam}:
\begin{equation}\label{eq:squares-are-zero}
\xi_i^2 = 0, \quad i\in[M].
\end{equation}
An important property for elements of $\Omega^{M}$ is the following (see e.g.~\citet[Proposition A.6]{sportiello}).
\begin{proposition}\label{prop:GrassMon}
    The Grassmann algebra $\Omega^{M}$ is a free $\R$-module with basis given by the $2^{M}$ monomials $\xi_I=\xi_{i_1}\cdots\xi_{i_p}$ where $I=\{i_1,\,\ldots,\,i_p\} \subseteq [M]$ with $i_1<\dots<i_p$. Each element $F \in \Omega^{M}$ can be written uniquely in the form
    \[
    F= \sum_{I \subseteq [M]} a_I \xi_I, \quad a_I \in \R .
    \]
\end{proposition}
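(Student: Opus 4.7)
The plan is to establish the spanning property first, then linear independence via a concrete faithful representation.

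For spanning, every element of $\Omega^{M}$ is a finite $\R$-linear combination of words $\xi_{j_1}\cdots \xi_{j_k}$ with $j_1,\ldots,j_k \in [M]$. Applying the anticommutation relation \eqref{eq:anticommute} repeatedly, adjacent letters can be transposed at the cost of a sign; after at most $\binom{k}{2}$ such swaps the indices are rearranged in weakly increasing order, producing $\pm \xi_{j_{\sigma(1)}}\cdots \xi_{j_{\sigma(k)}}$ for some permutation $\sigma$. If any index repeats, Pauli's exclusion \eqref{eq:squares-are-zero} kills the word; otherwise the result is $\pm \xi_I$ for the ordered subset $I=\{j_{\sigma(1)} < \cdots < j_{\sigma(k)}\}$. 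Hence $\{\xi_I : I \subseteq [M]\}$ spans $\Omega^{M}$ as an $\R$-module.

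The main obstacle is linear independence: one must rule out any nontrivial relation $\sum_I a_I \xi_I = 0$ in $\Omega^{M}$, i.e.\ show that no such combination lies in the two-sided ideal defining the quotient. The plan is to exhibit a faithful module for $\Omega^{M}$. Set $W \coloneqq \R^{2^{M}}$ with basis $\{e_I\}_{I \subseteq [M]}$ indexed by subsets, and define linear operators $\rho_i : W \to W$, for $i \in [M]$, by
\[
\rho_i(e_I) \coloneqq
\begin{cases}
(-1)^{|\{j \in I \,:\, j < i\}|} \, e_{I \cup \{i\}}, & i \notin I, \\
0, & i \in I.
\end{cases}
\]
A direct check, splitting on whether $i,j \in I$ and comparing the signs accumulated in the two orders of application, verifies $\rho_i \rho_j + \rho_j \rho_i = 0$ for all $i,j \in [M]$ (including $\rho_i^2 = 0$). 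By the universal property of the quotient defining $\Omega^{M}$, there is then a unique algebra homomorphism $\Phi : \Omega^{M} \to \mathrm{End}(W)$ with $\Phi(\xi_i) = \rho_i$.

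To conclude, for each ordered $I = \{i_1 < \cdots < i_p\}$ one has
\[
\Phi(\xi_I)(e_\emptyset) = \rho_{i_1}\circ\cdots\circ\rho_{i_p}(e_\emptyset) = \pm\, e_I ,
\]
so the operators $\Phi(\xi_I)$ send $e_\emptyset$ to distinct basis vectors of $W$ and are therefore linearly independent in $\mathrm{End}(W)$. Consequently the $\xi_I$'s themselves are linearly independent in $\Omega^{M}$, and together with the spanning step they form an $\R$-basis of $\Omega^{M}$; uniqueness of the expansion $F = \sum_I a_I \xi_I$ is then immediate from the basis property.
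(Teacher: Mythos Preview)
Your proof is correct. The spanning argument via anticommutation and nilpotency is standard, and your linear-independence argument via the ``Fock space'' representation on $W=\R^{2^M}$ is clean: the sign computation for $\rho_i\rho_j+\rho_j\rho_i=0$ checks out (for $i<j$ and $i,j\notin I$, the two orders of application produce signs $(-1)^{a+b}$ and $(-1)^{a+b+1}$ respectively, where $a=|\{k\in I:k<j\}|$ and $b=|\{k\in I:k<i\}|$), and evaluation at $e_\emptyset$ indeed separates the $\Phi(\xi_I)$. In fact one gets $\Phi(\xi_I)(e_\emptyset)=+e_I$ on the nose, not just $\pm e_I$, since at each step the newly inserted index is smaller than all indices already present.

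The paper does not give its own proof of this proposition; it is stated as a standard fact with a reference to \cite[Proposition~A.6]{sportiello}. So there is nothing to compare against---your argument simply supplies what the paper omits.
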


Next we will define Grassmannian derivation and integration. 
\begin{definition}[Grassmannian derivation, {\citet[Equation 8]{abdesselam}}]
Let $j \in [M]$. The derivative $\partial_{\xi_j}:\, \Omega^{M} \to \Omega^{M}$ is a map defined by the following action on the monomials $\xi_{i_1} \cdots \xi_{i_p}$. For $I=\{i_1,\,\ldots,\,i_p\}$ one has
\[
    \partial_{\xi_j} \xi_I = \begin{cases}
    (-1)^{\alpha-1} \xi_{i_1} \cdots \xi_{i_{\alpha-1}} \xi_{i_{\alpha+1}} \cdots \xi_{i_p} & \text{if there is }1\le \alpha\le p \text{ such that } i_{\alpha} = j, \\
    \hfil 0 & \text{if } j\notin I.
    \end{cases}
\]
\end{definition}
The following characterization of Grassmanian integration can be found in e.g.~\citet[Equation 2.2.7]{swan}. Grassmann--Berezin integrals on fermionic spaces are completely determined by their values on Grassmann monomials as these form a basis for the space.
\begin{definition}[Grassmannian--Berezin integration]
The Grassmann--Berezin integral on fermionic spaces is defined as
\[
\int F d \xi \coloneqq \partial_{\xi_{M}}\partial_{\xi_{M-1}} \cdots \partial_{\xi_2}\partial_{\xi_1}F, \quad F\in \Omega^{M}.
\]
\end{definition}

On the grounds of this definition, for the rest of the paper Grassmannian--Berezin integrals will be denoted by $\left(\prod_{i=1}^M\partial_{\xi_i}\right) F$.
\begin{definition}\label{def:func-of-grasmmann}
	Let $f:\, \R \to \R$ be an analytic function given by $f(x)= \sum_{k=0}^\infty a_k x^k $, and let $F \in \Omega^{M}$ be an element of the Grassmann algebra.
	We define the composition of an analytic function with an element of the Grassmann algebra, $f(F)$, by
	\begin{equation*}
		f(F)\coloneqq \sum_{k=0}^{\infty}
		a_k F^{k}.
	\end{equation*}
\end{definition}
Notice that the series $f(F)$ is, in fact, a finite sum since the $\xi$'s are nilpotent. An important special case arises when $M$ is even, $M=2m$, and the generators $\xi_1,\,\ldots,\,\xi_M$ are divided into two groups $\psi_1,\,\ldots,\,\psi_m$ and $\bpsi_1,\,\ldots,\,\bpsi_m$, where we think of
each $\psi_i$ as paired with its corresponding $\bpsi_i$. Since we have an even number of generators of the Grassmann algebra, we have that
\[
\prod_{i=1}^m \partial_{\bpsi_i} \partial_{\psi_i} = (-1)^{m(m-1)/2} \left ( \prod_{i=1}^m \partial_{\bpsi_i} \right ) \left ( \prod_{i=1}^m \partial_{\psi_i} \right ),
\]
and this can be identified as being a collection of "complex" fermionic variables in the language of
\citet[Equation (A.60)]{sportiello}. We stress that the notation $\overline\psi$ is only suggestive of complex conjugation and does not have anything to do with complex numbers. We will use bold to denote the collection of Grassmannian variables, for instance $\bm \psi=(\psi_i)_{i=1}^m$.
In particular in the following $\bm \psi $ and $\bm\bpsi$ will be treated as $m\times 1$ vectors.

The next result~\citep[Proposition A.14]{sportiello} computes the integral of so-called Grassmannian Gaussians.
\begin{proposition}[Gaussian integral for “complex” fermions]\label{prop:det}
Let $A$ be an $m\times m$ matrix with coefficients in $\R$.
Then
\[
\left(\prod_{i=1}^m\partial_{\bpsi_i}\partial_{\psi_i}\right)\exp\left((\bm{\psi},\,A\bm{\bpsi})\right)=\det(A).
\]
\end{proposition}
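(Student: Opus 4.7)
The plan is to expand the exponential, exploit the Grassmannian relations to reduce the computation to a single top monomial, and then extract the determinant from its coefficient.

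First, I would use the fact that each pair $\psi_i\bpsi_j$ is an \emph{even} element of the algebra (a product of two odd generators), so these pairs mutually commute. Writing $X_i \coloneqq \sum_{j=1}^m A_{ij}\bpsi_j$, this commutativity factorizes the exponential as
\[
\exp((\bm\psi, A\bm\bpsi)) = \exp\Big(\sum_{i,j} A_{ij}\psi_i\bpsi_j\Big) = \prod_{i=1}^m \exp(\psi_i X_i).
\]
By Pauli exclusion \eqref{eq:squares-are-zero}, $(\psi_i X_i)^2 = 0$ (since $\psi_i^2=0$), so each factor collapses to $1 + \psi_i X_i$ and
\[
\exp((\bm\psi, A\bm\bpsi)) = \prod_{i=1}^m (1 + \psi_i X_i).
\]

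Second, I would argue that only one summand in the expansion of this product survives the Berezin integral. The operator $\prod_{i=1}^m\partial_{\bpsi_i}\partial_{\psi_i}$ annihilates any monomial missing some $\psi_i$, and in our product the generator $\psi_i$ appears only in the $i$-th factor. Hence the only contributing term is the ``top'' one, $\prod_{i=1}^m \psi_i X_i$. Each $X_i$ is odd, so sliding each $\psi_i$ past the preceding $X_1,\ldots,X_{i-1}$ costs a sign $(-1)^{i-1}$; collecting these signs gives
\[
\prod_{i=1}^m \psi_i X_i = (-1)^{m(m-1)/2}\,(\psi_1\cdots\psi_m)(X_1\cdots X_m).
\]

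Third, I would recognize the determinant inside the $\bpsi$-product. Expanding and using that any $\bpsi$-monomial with a repeated index vanishes,
\[
X_1\cdots X_m = \sum_{\tau\in S_m}\Big(\prod_{i=1}^m A_{i,\tau(i)}\Big)\bpsi_{\tau(1)}\cdots\bpsi_{\tau(m)} = \det(A)\,\bpsi_1\cdots\bpsi_m,
\]
since reordering $\bpsi_{\tau(1)}\cdots\bpsi_{\tau(m)}$ into canonical form produces $\mathrm{sgn}(\tau)$. Combining with the previous step, only the monomial $(-1)^{m(m-1)/2}\det(A)\,\psi_1\cdots\psi_m\bpsi_1\cdots\bpsi_m$ contributes to the integral.

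Finally, I would compute the base integral $\prod_{i=1}^m\partial_{\bpsi_i}\partial_{\psi_i}(\psi_1\cdots\psi_m\bpsi_1\cdots\bpsi_m)$ directly from the definition. Using the identity $\prod_i\partial_{\bpsi_i}\partial_{\psi_i} = (-1)^{m(m-1)/2}\big(\prod_i\partial_{\bpsi_i}\big)\big(\prod_i\partial_{\psi_i}\big)$ noted in the excerpt, and differentiating in sequence, each of the two grouped derivative-products applied to its matching ordered monomial contributes a factor $(-1)^{m(m-1)/2}$, while the prefactor adds another such sign, so the three signs multiply to $(-1)^{m(m-1)/2}$. Combining with the coefficient from the previous step, the total is $(-1)^{m(m-1)/2}\cdot(-1)^{m(m-1)/2}\det(A) = \det(A)$. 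The only real obstacle in this argument is bookkeeping of signs from the anticommutation relations, particularly when permuting odd factors past each other and when carefully checking the base integral $\int \psi_1\cdots\psi_m\bpsi_1\cdots\bpsi_m$.
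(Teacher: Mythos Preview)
The paper does not prove this proposition; it is quoted from the literature (\cite[Proposition A.14]{sportiello}) without proof. Your direct argument---factorize the exponential using that $\psi_iX_i$ is even and nilpotent, keep only the top monomial, and read off $\det(A)$ from the antisymmetrization of $X_1\cdots X_m$---is the standard one and is correct.

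One small comment: your Step~4 is the weakest link, as you yourself flag. The claim that ``each of the two grouped derivative-products applied to its matching ordered monomial contributes a factor $(-1)^{m(m-1)/2}$'' depends on the precise ordering convention hidden in the notation $\prod_i\partial_{\psi_i}$, which the paper leaves somewhat implicit. A cleaner way to close the argument is to bypass the regrouping identity entirely and note that the pairs $\partial_{\bpsi_i}\partial_{\psi_i}$ are even operators, hence mutually commute; then compute $\big(\prod_i\partial_{\bpsi_i}\partial_{\psi_i}\big)\psi_1\cdots\psi_m\bpsi_1\cdots\bpsi_m$ by induction on $m$ (or simply check it for $m=1,2$ to fix the sign and observe the pattern). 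This yields $(-1)^{m(m-1)/2}$ directly, cancelling the coefficient from Step~2 and giving $\det(A)$ without juggling three separate sign factors.
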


Another result~\citep[Theorem A.16]{sportiello} is the analog of Wick's formula for Grassmannian Gaussians and also will be important to study properties of ``transformed'' normal variables in the fermionic context.

For a given matrix $A = (A_{i,\,j})_{i \in I_0,\, j \in J_0}$, and $I \subseteq I_0,\, J\subseteq J_0$, such that $|I|=|J|$, we write $\det (A)_{IJ}$ to denote the determinant of the submatrix $(A_{i,\,j})_{i \in I,\, j \in J}$. When $I=J$, we simply write $\det(A)_I$.

\begin{theorem}[Wick’s theorem for “complex” fermions]\label{thm:sportiello}
Let $A $ be an ${m\times m}$, $B$ an $ {r\times m}$ and $C$ an $ {m\times r}$ matrix respectively with coefficients in $\R$.
For any sequences of indices $I = 
\{i_1,\,\dots,\,i_r\}$ and $J = \{j_1,\,\dots,\,j_r\}$ in $[m]$ of the same length $r$, if the matrix $A$ is invertible we have
\begin{enumerate}[label=\arabic*.,ref=\arabic*.]
    \item\label{thm_Wick_one} $\displaystyle \left(\prod_{i=1}^m\partial_{\bpsi_i}\partial_{\psi_i}\right) \prod_{\alpha=1}^r  \psi_{i_\alpha}\bpsi_{j_\alpha}\exp\left((\bm{\psi},\,A\bm{\bpsi})\right) = \det(A) \det\left(A^{-\intercal}\right)_{IJ}$,

    \item\label{thm_Wick_two} $\displaystyle \left(\prod_{i=1}^m\partial_{\bpsi_i}\partial_{\psi_i}\right) \prod_{\alpha=1}^r (\psi^T C)_\alpha (B\bpsi)_\alpha\exp\left((\bm{\psi},\,A\bm{\bpsi})\right) = \det(A) \det\left(BA^{-1}C\right)$.
\end{enumerate}
If $|I| \neq |J|$, the integral is $0$.
\end{theorem}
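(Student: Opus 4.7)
The plan is to prove item \ref{thm_Wick_one} by the method of Grassmannian sources, then deduce item \ref{thm_Wick_two} from item \ref{thm_Wick_one} by multilinearity and a Cauchy--Binet identity.

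For item \ref{thm_Wick_one}, I would introduce auxiliary Grassmann generators $\eta_1,\dots,\eta_m$ and $\bar\eta_1,\dots,\bar\eta_m$ anticommuting with each other and with all $\psi_i,\bpsi_i$, and consider the generating functional
\[
Z(\bm\eta,\bm{\bar\eta}) \coloneqq \left(\prod_{i=1}^m \partial_{\bpsi_i}\partial_{\psi_i}\right) \exp\bigl((\bm\psi,A\bm\bpsi) + (\bm\psi,\bm{\bar\eta}) + (\bm\eta,\bm\bpsi)\bigr).
\]
By the Grassmannian analogue of completing the square, i.e.\ the translation $\bm\psi \mapsto \bm\psi + A^{-\intercal}\bm\eta$ and $\bm\bpsi \mapsto \bm\bpsi + A^{-1}\bm{\bar\eta}$ (which leaves the Berezin integral invariant, as can be verified directly on the monomial basis of Proposition \ref{prop:GrassMon}), combined with Proposition \ref{prop:det}, one obtains
\[
Z(\bm\eta,\bm{\bar\eta}) = \det(A)\exp\bigl(-(\bm\eta,A^{-1}\bm{\bar\eta})\bigr).
\]
Applying $\prod_{\alpha=1}^r \partial_{\bar\eta_{i_\alpha}}\partial_{\eta_{j_\alpha}}$ and evaluating at zero sources recovers the left-hand side of item \ref{thm_Wick_one}, while on the right-hand side the Leibniz expansion of the exponential of a bilinear form produces exactly $\det(A)\det(A^{-\intercal})_{IJ}$. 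The vanishing when $|I|\neq|J|$ is immediate from Proposition \ref{prop:GrassMon}: the Berezin integral is nonzero only if the monomial basis expansion of the integrand contains the top monomial $\psi_1\bpsi_1\cdots\psi_m\bpsi_m$, which requires matching numbers of $\psi$- and $\bpsi$-generators.

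For item \ref{thm_Wick_two}, I would use multilinearity. Writing $(\bm\psi^\intercal C)_\alpha = \sum_i C_{i\alpha}\psi_i$ and $(B\bm\bpsi)_\alpha = \sum_j B_{\alpha j}\bpsi_j$, expansion produces
\[
\prod_{\alpha=1}^r (\bm\psi^\intercal C)_\alpha (B\bm\bpsi)_\alpha = \sum_{I,J \in [m]^r} \left(\prod_{\alpha=1}^r C_{i_\alpha\alpha} B_{\alpha j_\alpha}\right) \prod_{\alpha=1}^r \psi_{i_\alpha}\bpsi_{j_\alpha},
\]
where tuples with repeated entries contribute zero by \eqref{eq:squares-are-zero}. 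Applying item \ref{thm_Wick_one} term-by-term and invoking the Cauchy--Binet formula in the form $\det(BA^{-1}C) = \sum_{|I|=|J|=r} \det(B)_{[r]J}\det(A^{-1})_{JI}\det(C)_{I[r]}$, one reassembles the resulting sum into $\det(A)\det(BA^{-1}C)$.

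The main technical obstacle throughout is careful sign bookkeeping arising from the anticommutativity relations \eqref{eq:anticommute}: one must check that reordering either tuple $(i_\alpha)$ or $(j_\alpha)$ produces matching sign changes on both sides of item \ref{thm_Wick_one}, so that the passage from ordered-tuple sums to signed sums over ordered subsets is compatible with the Cauchy--Binet expansion. Once sign conventions are fixed consistently throughout, both items reduce to direct algebraic verification.
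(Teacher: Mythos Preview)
The paper does not prove this theorem; it is quoted from the literature with the citation \citep[Theorem A.16]{sportiello} and used as a black box. Your proposal is a correct and standard route to the result: the source-field/generating-functional argument for item~\ref{thm_Wick_one} is exactly how such Wick identities are usually derived, and reducing item~\ref{thm_Wick_two} to item~\ref{thm_Wick_one} via multilinear expansion and Cauchy--Binet is the natural follow-up. Your caveat about sign bookkeeping is apt but not a genuine obstruction---once one fixes the convention that reordering the $\psi$'s or $\bpsi$'s introduces the sign of the corresponding permutation on both sides, the signs track mechanically.
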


\subsection{The fermionic Gaussian Free Field}\label{subsec:fGFF}

Unless stated otherwise, let $\Lambda\subseteq \Z^d$ be finite and connected in the usual graph sense. We will also consider its wired version $\Lambda^g$ as described on page~\pageref{pg:ghost}. 

\paragraph{Grassmannian algebra on $\Lambda$.}

We construct the (real) Grassmannian algebra $\Omega^{2\Lambda}$ resp. $\Omega^{2\Lambda^g}$ with generators $\{\psi_v,\,\bpsi_v:\,v\in \Lambda\}$ resp. $\{\psi_v,\,\bpsi_v:\,v\in \Lambda^g\}$ as described in Subsection~\ref{subsec:primer}. Note that $\Omega^{2\Lambda}$ is a subset of the algebra $\Omega^{2\Lambda^g}$.\newline

In the following we will now define the fermionic Gaussian free field with pinned and Dirichlet boundary conditions. 
We use here for clarity the notation $\langle f,\,g\rangle_{\Lambda^g}\coloneqq \sum_{v\in\Lambda^g}f_v g_v$.

%\label{def:fgff-wir-pin}
\begin{definition}[Fermionic Gaussian free field]\hfill% Please do not remove this % symbol
\begin{description}[font=\normalfont\itshape\space]
\item[Pinned boundary conditions.]
The unnormalized fermionic Gaussian free field state on $\Lambda^g$ pinned at $g$ is the linear map $\efp{\cdot}_\Lambda:\, \Omega^{2\Lambda^g} \to \mathbb{R}$ defined as 
\[
	\efp{F}_{\Lambda} \coloneqq \left(\prod_{v\in\Lambda^g}\partial_{\bpsi_v}\partial_{\psi_v}\right) \psi_g \bpsi_g \exp\left(\inpr{\bm{\psi}}{-\Delta^g \bm{\bpsi}}_{\Lambda^g} +\inpr{\delta_{g}}{\bm{\psi}\bm{\bpsi}}_{\Lambda^g}\right)F,\quad F\in \Omega^{2\Lambda^g} .
 \]
\item[Dirichlet boundary conditions.]
The unnormalized fermionic Gaussian free field state with Dirichlet boundary conditions is the linear map $\efp{\cdot}_\Lambda:\, \Omega^{2\Lambda} \to \mathbb{R}$ defined as
\[
\efd{F}_{\Lambda} \coloneqq \left(\prod_{v\in\Lambda}\partial_{\bpsi_v}\partial_{\psi_v}\right)\exp\left(\inpr{\bm{\psi}}{-\Delta_\Lambda \bm{\bpsi}}\right)F,\quad F\in \Omega^{2\Lambda}.
\] 
\end{description}
\end{definition}

We are borrowing here the terminology of ``state'' from statistical mechanics (compare \citet[Definition 3.17]{friedlivelenik} since we do not associate a probability measure to the fermionic fGFFs. Note that we can define the normalized counterpart of the fGFF with pinned boundary conditions as 
\[
	\Efp{F}_{\Lambda}\coloneqq\frac{1}{\zfp_\Lambda}\efp{F}_{\Lambda},
\]
where $\zfp_\Lambda\coloneqq \efp{1}_\Lambda$. The normalization constant can be determined from Theorem~\ref{thm:sportiello}:
\[
 \zfp_\Lambda = \det(-\Delta_\Lambda).
\]
We can also define the normalized expectation $\Efd{\cdot}_\Lambda$ in a similar fashion using Proposition~\ref{thm:sportiello} (noting that the normalization constant $\zfd_\Lambda$ also equals $\det(-\Delta_\Lambda)$).
To avoid cluttering notation, in our definitions we write $\Lambda$ as subindex, even though $\efd{\cdot}_{\Lambda}$, $\efp{\cdot}_\Lambda$ and their normalized counterparts live on $\Lambda^g$.

We will also consider gradients of the generators in the following sense. 

\begin{definition}[Gradient of the generators]
The gradient of the generators in the $i$-th direction of $\R^d$ is given by
 \[
 \nabla_{e_i}\psi(v)=\psi_{v+e_i}-\psi_v,\quad\nabla_{e_i}\bpsi(v)=\bpsi_{v+e_i}-\bpsi_v,\quad v\in \Lambda,\,i=1,\,\ldots,\,2d.
 \]
 \end{definition}
Remember that $e_{d+i} \coloneqq -e_{i}$ for $1 \le i \le d$, as introduced at the beginning of Section~\ref{sec:notation-and-definitions}. 

Although the fermionic setting does not carry the notion of realization of random variables, we interpret the evaluation of the states over observables as expectations, so we can extend the notion of cumulants to the fermionic setting via the analogous expression for usual probability measures. Therefore, we define cumulants as follows.

\begin{definition}[Cumulants of Grassmannian observables]\label{def:cum_mult_Gras}
Let $V\subseteq \Lambda^{g}$. The joint cumulants $\kappa^{\bullet}_{\Lambda}(W_v:\, v\in V)$ of the Grassmannian observables $(W_v)_{v\in V}$ are defined as
\begin{equation}\label{def:cum_Gras}
	\kappa^{\bullet}_\Lambda\left(W_v:\,v\in V\right) = \sum_{\pi\in\Pi(V)} \left(|\pi|-1\right)\!!\, (-1)^{|\pi|-1} \prod_{B\in\pi} \left\langle{\prod_{v\in B} W_v}\right\rangle_\Lambda^\bullet .
\end{equation}
\end{definition}
The $\bullet$ indicates that we are considering states both under the pinned and the Dirichlet conditions. As before one has
\[
 \left\langle{\prod_{v\in V} W_v }\right\rangle_\Lambda^\bullet  = \sum_{\pi \in \Pi(V)} \prod_{B\in \pi} \kappa^\bullet_\Lambda(W_v:\, v\in B).
\]

\begin{example}
Let $v \in \Lambda$ and $F = \psi_v \bpsi_v$. By Theorem~\ref{thm:sportiello} item~\ref{thm_Wick_one} we have that
\[
\begin{split}
\Efd{\psi_v \bpsi_v}_\Lambda &= \frac{1}{\det(-\Delta_{\Lambda})} \det(-\Delta_{\Lambda}) G_{\Lambda}(v,\,v)= G_{\Lambda}(v,\,v).
\end{split}
\]
The two-point function for $v,\,w \in \Lambda$ such that $v\neq w$ is equal to
\[
\Efd{\psi_v \bpsi_v \psi_w \bpsi_w}_\Lambda =  \det \begin{pmatrix} G_{\Lambda}(v,\,v) & G_{\Lambda}(v,\,w) \\
G_{\Lambda}(w,\,v) & G_{\Lambda}(w,\,w) 
\end{pmatrix}=G_\Lambda(v,\,v)G_\Lambda(w,\,w)-G_\Lambda(v,\,w)^2.
\]
In particular, for $v\neq w$, $\kappa_\Lambda^{\mathbf 0}(\psi_v \bpsi_v,\,\psi_w \bpsi_w)=-G_\Lambda(v,\,w)^2 < 0$; that is, we have negative ``correlations", or more precisely, negative joint cumulants of second order.
\end{example}

Let us now introduce the main object of study in this paper.

\begin{definition}[Gradient squared of the generators]\label{def:XandY}
The ``gradient squared'' ${\bf X}=(X_v)_{v\in V}$ of the generators  is defined as

\begin{equation}\label{eq:defX}
    X_v\coloneqq 
    \frac{1}{2d}\sum_{i=1}^{2d}\nabla_{e_i}\psi(v)\nabla_{e_i}\bpsi(v),\quad v\in \Lambda.
\end{equation}
\end{definition}
This ``gradient squared'' will be evaluated under the fGFF state.
We will also need  auxiliary Grassmannian observables ${\bf Y}=(Y_v)_{v\in V}$ defined as
\begin{equation}\label{eq:defY}
Y_v\coloneqq \prod_{i=1}^{2d}\left(1-\nabla_{e_i}\psi(v)\nabla_{e_i}\bpsi(v)\right),\quad v\in \Lambda.
\end{equation}

The reader may have noticed that $X_v,Y_v$ may not be compatible with the state $\Efd{\cdot}_\Lambda$ for points close to the boundary.
However, our setup will allow us to work only ``well inside'' $\Lambda$ so that we can safely work with $X_v$ and $Y_v$ under the Dirichlet state.
In fact, Lemma~\ref{lem:equiv-fggs} is key in this regard, showing that the fermionic Gaussian free fields defined with pinned and Dirichlet boundary conditions respectively agree on observables that only depend on $\Lambda$.

%End of section graphs-and-permutations

\section{Results}\label{sec:results}

In this section, we will state our main results. The first theorem provides a representation of the expectation of the height-one field of the Abelian sandpile model in terms of the fields ${\bf X}$ and ${\bf Y}$. 

\begin{theorem}\label{thm:height-fgff}
For $V\subseteq \Lambda^{\mathrm{in}}$ a good set as in Definition \ref{def:goodset}, and ${\bf X}$, ${\bf Y}$ defined in Equations \eqref{eq:defX} and \eqref{eq:defY}, we have 
\begin{equation}\label{eq:th1}
\E\left(\prod_{v\in V}h_\Lambda(v)\right)=\Efp{\prod_{v\in V} X_v Y_v}_\Lambda.
\end{equation}
\end{theorem}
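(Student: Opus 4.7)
My plan is to evaluate both sides of~\eqref{eq:th1} as the same weighted spanning-tree enumeration on $\Lambda^{g}$, normalized by $\zfp_\Lambda=\det(-\Delta_\Lambda)$, via two different routes, and then to match the outcomes.

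On the probabilistic side, I would invoke the Dhar--Majumdar burning bijection: the stationary measure of the ASM on $\Lambda$ is the uniform measure on spanning trees of $\Lambda^{g}$ rooted at the ghost $g$. Combining Lemma~\ref{lem:char_heightone} with the local characterization of the height-one event (in the spirit of D\"urre's thesis and the original Majumdar--Dhar analysis), the event $\{h_\Lambda(v)=1\}$ translates into an explicit local condition on the tree edges incident to $v$: exactly one of the $2d$ incident edges is the outgoing edge from $v$ to its predecessor in the burning order, and no other incident edge may deliver an additional burning signal to $v$. Goodness of $V$ (Definition~\ref{def:goodset}) ensures that the local conditions at distinct $v\in V$ live on disjoint edge-neighborhoods, and the path-to-ghost condition guarantees global compatibility with a spanning-tree structure. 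An all-minors form of Kirchhoff's matrix-tree theorem then yields
\[
\E\left(\prod_{v\in V}h_\Lambda(v)\right)=\frac{\det(M_V)}{\det(-\Delta_\Lambda)},
\]
for a Laplacian-type matrix $M_V$ with explicit local modifications at each $v\in V$.

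On the fermionic side, I would first simplify $X_v Y_v$ using that $A_i\coloneqq\nabla_{e_i}\psi(v)\nabla_{e_i}\bpsi(v)$ is nilpotent with $A_i^2=0$ and that the $A_i$'s commute pairwise as products of two Grassmann generators. Since $A_i(1-A_i)=A_i$, one obtains
\[
X_v Y_v=\frac{1}{2d}\sum_{i=1}^{2d} A_i\prod_{j\neq i}(1-A_j).
\]
Substituting into $\efp{\cdot}_\Lambda$ and expanding by the fermionic Wick theorem (Theorem~\ref{thm:sportiello}), each monomial of Grassmannians becomes a minor of the pinned Green's function matrix with prescribed discrete gradient structure along the edges incident to $v$. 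A Cauchy--Binet / matrix-tree reorganization of the resulting signed sum, divided by $\zfp_\Lambda=\det(-\Delta_\Lambda)$, then reproduces $\det(M_V)/\det(-\Delta_\Lambda)$, matching the left-hand side.

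The main obstacle is the combinatorial matching step: showing that the inclusion--exclusion factor $\prod_{j\neq i}(1-A_j)$ in $Y_v$, together with the symmetric $1/(2d)$ averaging in $X_v$, implements exactly the local modification $M_V$ of the Laplacian encoding the height-one constraint. For a single $v$ this amounts to verifying the classical Majumdar--Dhar single-site formula; for multi-point $V$, goodness of $V$ propagates the single-site identity to the joint one by decoupling the determinantal contributions across distinct sites. Carefully tracking the signs in the Wick expansion and aligning them with those produced by the matrix-tree modifications is the most delicate bookkeeping, and I expect the goodness hypothesis to be precisely what makes this factorization hold exactly at the determinantal level.
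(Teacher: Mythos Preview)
Your proposal is plausible but takes a genuinely different route from the paper, and the route you chose makes the ``main obstacle'' you flag harder than it needs to be.

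The paper never introduces a modified Laplacian $M_V$ and never matches two global determinants. Instead it works term-by-term. On the probabilistic side, Lemma~\ref{lem:height1} shows that for any fixed choice $\eta:V\to E(V)$ with $\eta(v)\in E_v$,
\[
\E\Big(\prod_{v\in V}h_\Lambda(v)\Big)=\mathbf{P}\big(e\notin T\ \forall\,e\in E(V)\setminus\eta(V)\big),
\]
and crucially this does not depend on $\eta$. An ordinary inclusion--exclusion turns the right-hand side into a signed sum of probabilities $\mathbf{P}(S\subseteq T)$ over subsets $S\supseteq\eta(V)$. Each such term is then converted \emph{individually} into a fermionic expectation via the identity $\mathbf{P}(S\subseteq T)=\Efp{\zeta_S}_\Lambda$ (Proposition~\ref{thm:swan_UST_fGFF}, which is just the transfer-current/matrix-tree theorem combined with Wick). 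Finally, the signed sum of $\zeta_S$'s is reassembled inside the bracket using the elementary identity $\prod_i(1-a_i)=\sum_A(-1)^{|A|}\prod_{j\in A}a_j$ together with the nilpotency $A_i^2=0$ you already noted, yielding exactly $\prod_v X_vY_v$ for the given $\eta$. Averaging over the $(2d)^{|V|}$ choices of $\eta$ (all giving the same value) produces the $\tfrac{1}{2d}$ prefactor in $X_v$.

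So your algebraic simplification of $X_vY_v$ is correct and is implicitly used, but the matching you worry about---aligning the inclusion--exclusion with a local Laplacian modification---is bypassed entirely. The paper matches at the level of a single edge set $S$, where the identity is just the standard transfer-matrix formula, and the delicate bookkeeping of signs reduces to the trivial expansion of $\prod(1-A_j)$. Your determinant-matching strategy could be pushed through, but it reproves a harder combinatorial identity that the paper's argument avoids.
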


\noindent
In the next theorem, we derive a closed-form expression for the joint cumulants of the field ${\bf X}$, together with their continuum scaling limit.

Let $U$ be a smooth, connected, bounded subset of $\R^d$ and define $U_\eps \coloneqq U/\eps \cap \Z^d$.
For any $v\in U$, let $v_\eps$ be the discrete approximation of $v$ in $U_\eps$; that is, $v_\eps \coloneqq \floor{v/\eps}$.
Define also $G_\eps$ as the discrete harmonic Green's function on $U_\eps$ with $0$-boundary conditions outside $U_\eps$, and $g_U$ the continuum harmonic Green's function on $U$ with $0$-boundary conditions outside $U$ (recall Section~\ref{sec:notation-and-definitions}).
We write $X_v^\eps$, $\bf{X}^\eps$ and $Y_v^\eps$, $\bf{Y}^\eps$ to emphasize the dependence of $v$ on $\eps$ whenever $v$ belongs to $U_\eps$. Cyclic permutations without fixed points of a finite set $A$ are denoted as $S_\text{cycl}(A)$.

\begin{theorem}\label{thm:main_cum2}$ $

\begin{enumerate}[label=\arabic*., ref=\arabic*]
\item\label{item1_cum2} Let $n \geq 1$ and let the set of points $V\coloneqq \left\{v_1,\,\dots,\,v_n\right\}\subseteq \Lambda^{\mathrm{in}}$ be a good set. Let $\eta$ denote a map from $ V$ to $E(V)$ such that $\eta(v)\in E_v$ for all $v\in V$. The  joint cumulants of the field $(-X_v)_{v\in V}$ are given by 

\begin{equation}\label{eq:thm:maincum2}
	 \kappa_\Lambda^{\mathbf 0}\left(-X_v:\,v\in V\right) = - \frac{1}{(2d)^n}\sum_{\sigma\in S_{\cycl}(V)} \sum_{\eta:\, V\to\{e_1,\,\dots,\,e_{2d}\}} \prod_{v\in V} \nabla_{\eta(v)}^{(1)}\nabla_{\eta(\sigma(v))}^{(2)} G_\Lambda\left(v,\, \sigma(v)\right) .
\end{equation}

\item\label{item2_cum2} Let $n\ge 2$ and $V\coloneqq \left\{v_1,\,\dots,\,v_n\right\}\subseteq U$ be such that $\dist(V,\,\partial U)>0$. If $v_i \neq v_j$ for all $i\neq j$, then
\begin{align}\label{eq:cum_limit}
    \tilde\kappa_1(v_1,\,\dots,\,v_n)&\coloneqq
	\lim_{\eps\to 0} \eps^{-dn} \kappa_\Lambda^{\mathbf 0}\left(-X_v^\eps:\,v\in V\right) \nonumber
	\\ &=  
	- \frac{1}{d^n}  \sum_{\sigma\in S_{\cycl}(V)} \sum_{\eta:\,V\to \{e_1,\,\ldots,\,e_d\}} \prod_{v\in V} \partial_{\eta(v)}^{(1)}\partial_{\eta(\sigma(v))}^{(2)} g_U\left(v,\, \sigma(v)\right).
\end{align}
\end{enumerate}
\end{theorem}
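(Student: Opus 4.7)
For Part~\ref{item1_cum2} I would begin by expanding each $X_v$ via its definition \eqref{eq:defX} and using the multilinearity of cumulants (which follows from the linearity of the state $\langle\cdot\rangle_\Lambda^{\mathbf 0}$) to write
\[
\kappa_\Lambda^{\mathbf 0}(X_v:\,v\in V)
=
\frac{1}{(2d)^n}\sum_{\eta:\,V\to\{e_1,\,\dots,\,e_{2d}\}}\kappa_\Lambda^{\mathbf 0}\bigl(F_v^{\eta}:\,v\in V\bigr),
\qquad F_v^{\eta}\coloneqq \nabla_{\eta(v)}\psi(v)\,\nabla_{\eta(v)}\bpsi(v).
\]
For each fixed $\eta$, the product $\prod_{v\in V}F_v^{\eta}$ has exactly the form $\prod_{v}(\psi^{T}C)_{v}(B\bpsi)_{v}$ appearing in Theorem~\ref{thm:sportiello} item~\ref{thm_Wick_two}, where the matrices $B,\,C$ encode the discrete gradients in the prescribed directions and $A=-\Delta_\Lambda$ so that $A^{-1}=G_\Lambda$. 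A direct computation of $BA^{-1}C$ gives the matrix $M_\eta(u,v)\coloneqq \nabla^{(1)}_{\eta(u)}\nabla^{(2)}_{\eta(v)}G_\Lambda(u,v)$, hence
\[
\Bigl\langle\prod_{v\in B}F_v^{\eta}\Bigr\rangle_\Lambda^{\mathbf 0}=\det\bigl[M_\eta\bigr]_{B}\qquad\text{for every }B\subseteq V.
\]

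\textbf{Extracting the cumulant.} Expanding the determinant in cycle notation gives $\det[M_\eta]_B=\sum_{\sigma\in S(B)}\mathrm{sgn}(\sigma)\prod_{v\in B}M_\eta(v,\sigma(v))$ and $\mathrm{sgn}(\sigma)=\prod_{\text{cycles }c}(-1)^{|c|-1}$. Substituting this into the moments-cumulants identity $\bigl\langle\prod_{v\in V}F_v^{\eta}\bigr\rangle_\Lambda^{\mathbf 0}=\sum_{\pi\in\Pi(V)}\prod_{B\in\pi}\kappa_\Lambda^{\mathbf 0}(F_v^{\eta}:\,v\in B)$ and matching partitions with cycle decompositions of permutations, one obtains by induction (or by direct pattern-matching, since there is a canonical bijection between set partitions plus a cyclic permutation on each block and global permutations)
\[
\kappa_\Lambda^{\mathbf 0}(F_v^{\eta}:\,v\in V)
=(-1)^{n-1}\sum_{\sigma\in S_{\cycl}(V)}\prod_{v\in V}\nabla^{(1)}_{\eta(v)}\nabla^{(2)}_{\eta(\sigma(v))}G_\Lambda(v,\sigma(v)).
\]
The sign from $\kappa(-X_v)=(-1)^n\kappa(X_v)$ then combines with $(-1)^{n-1}$ to give the overall minus sign in \eqref{eq:thm:maincum2}.

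\textbf{Part 2 (scaling limit).} For Part~\ref{item2_cum2} I would apply \eqref{eq:thm:maincum2} with $\Lambda=U_\eps$, so that the task reduces to controlling each summand. The finite sums over $\eta$ and $\sigma\in S_{\cycl}(V)$ commute with the limit, so it suffices to show, uniformly for $v\neq w$ at positive distance from $\partial U$ and for $\eta,\eta'\in\{e_1,\,\dots,\,e_{2d}\}$, that
\[
\eps^{-d}\,\nabla^{(1)}_{\eta}\nabla^{(2)}_{\eta'}G_{U_\eps}(v^\eps,w^\eps)\;\xrightarrow[\eps\to 0]{}\;\partial^{(1)}_{\eta}\partial^{(2)}_{\eta'}g_U(v,w).
\]
This is a two-step classical estimate: start from the Green's function convergence $\eps^{d-2}G_{U_\eps}(v^\eps,w^\eps)\to g_U(v,w)$ (together with the $d=2$ logarithmic analogue), and upgrade it to the convergence of discrete second differences by exploiting the smoothness of $g_U$ away from the diagonal and standard pointwise estimates bounding the error between a discrete difference and the corresponding partial derivative by a constant times $\eps^2$ (so that the double gradient carries an extra factor $\eps^d$ after rescaling).

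\textbf{From $2d$ to $d$ directions and main obstacle.} A cyclic permutation $\sigma\in S_{\cycl}(V)$ has no fixed points and $\sigma(v)\neq \sigma^{-1}(v)$, so each variable $\eta(v)$ appears precisely twice in the product, once as $\partial^{(1)}_{\eta(v)}$ (in the factor indexed by $v$) and once as $\partial^{(2)}_{\eta(v)}$ (in the factor indexed by $\sigma^{-1}(v)$). The substitution $\eta(v)\mapsto -\eta(v)$ therefore produces two opposite sign flips that cancel, and the continuum product is invariant under the $\mathbb{Z}_2^{V}$ action of direction reversals. Summing over this action collapses $\sum_{\eta:\,V\to\{e_1,\,\dots,\,e_{2d}\}}$ to $2^n\sum_{\eta:\,V\to\{e_1,\,\dots,\,e_d\}}$, and combined with the prefactor $\tfrac{1}{(2d)^n}$ yields the $\tfrac{1}{d^n}$ in \eqref{eq:cum_limit}. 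The main obstacle is the second half of the convergence step: one needs a quantitative estimate for the discrete double gradient of $G_{U_\eps}$ uniform in $\eps$ and in the (bounded, but possibly close) pairs $(v,w)$, which is where the assumption $v_i\neq v_j$ enters to keep the kernel and its derivatives away from their diagonal singularity.
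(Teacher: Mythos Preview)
Your proof is correct and follows the same overall strategy as the paper. For Part~1 there is a minor organizational difference worth noting: the paper computes the joint moments $\langle\prod_{v\in B}X_v\rangle$ first, plugs them into the explicit cumulant formula~\eqref{def:cum_Gras}, and then reduces to cyclic permutations via a Stirling-number identity (equations~\eqref{eq:from_part_to_stirl}--\eqref{eq:magic-stirling}); you instead invoke multilinearity of cumulants to fix $\eta$ up front and then use the bijection (permutation $\leftrightarrow$ partition into cycles together with one cyclic permutation per block) combined with the moment--cumulant relation to read off the cumulant directly. Your route is slightly cleaner and sidesteps the Stirling computation, while the paper's version has the advantage of being reusable verbatim in the more intricate setting of Theorem~\ref{thm:main_cum3_discrete}, where the permutations act on edges rather than vertices and the simple bijection is no longer available. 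For Part~2 the two arguments coincide: the paper packages the convergence of discrete double gradients as Lemma~\ref{lem:cip4} (citing \cite{cipriani2022properties}), which is exactly the estimate you flag as the main obstacle, and the $2d\to d$ reduction is carried out identically. One small slip: your claim ``$\sigma(v)\neq\sigma^{-1}(v)$'' fails for $n=2$, but it is not actually needed---the cancellation only uses that each $\eta(v)$ appears exactly twice in the product, which holds for every $n\ge 2$.
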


\begin{corollary}\label{cor:lim_cum_X}
    Let $C_2\coloneqq 2/\pi-4/\pi^2$. Under the assumptions of Theorem~\ref{thm:main_cum2} item  \ref{item2_cum2} we have that
    \[
    \lim_{\eps\to 0}\eps^{-2n}\kappa(-C_2 \, X_v^\eps:\,v\in V)=\lim_{\eps\to 0} \eps^{-2n} \kappa\left(h_{U_\eps}(v_\eps):\,v\in V\right).
    \]
\end{corollary}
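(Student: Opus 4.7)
The strategy is to promote the moment identity of Theorem~\ref{thm:height-fgff} to an identity of cumulants, and then to identify the $Y$-factors as an asymptotically multiplicative local constant $C_2$ in the scaling limit.

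Applying Theorem~\ref{thm:height-fgff} to every subset $B\subseteq V$ of the good set $V$ gives the family of moment identities $\mathbb{E}\!\left[\prod_{v\in B} h_{U_\eps}(v_\eps)\right] = \Efp{\prod_{v\in B} X_v^\eps Y_v^\eps}_{U_\eps}$. Since both the probabilistic and the fermionic joint cumulants (Definitions~\ref{def:cum_mult} and~\ref{def:cum_mult_Gras}) are defined by the same M\"obius inversion of joint moments, these identities immediately translate into
\[
\kappa\!\left(h_{U_\eps}(v_\eps):v\in V\right)=\kappa^{\boldsymbol 0}_{U_\eps}\!\left(X^\eps_v Y^\eps_v:v\in V\right),
\]
where the pinned state is replaced by the Dirichlet state via Lemma~\ref{lem:equiv-fggs} (admissible because $V\subseteq U_\eps^{\mathrm{in}}$); in particular, both sides are now within the setting of Theorem~\ref{thm:main_cum2}.

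The heart of the proof is then the asymptotic factorization
\[
\lim_{\eps\to 0}\eps^{-2n}\kappa^{\boldsymbol 0}_{U_\eps}\!\left(X^\eps_v Y^\eps_v:v\in V\right) = (-C_2)^n\lim_{\eps\to 0}\eps^{-2n}\kappa^{\boldsymbol 0}_{U_\eps}\!\left(X^\eps_v:v\in V\right).
\]
To derive it, I would expand $Y_v=\sum_{S\subseteq\{1,\dots,2d\}}(-1)^{|S|}\prod_{i\in S}N_i(v)$ with $N_i(v) \coloneqq \nabla_{e_i}\psi(v)\nabla_{e_i}\bpsi(v)$, substitute into the cumulant, and use multilinearity together with the Wick theorem (Theorem~\ref{thm:sportiello}) to write each resulting term as a sum of Feynman-type diagrams on $V$. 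Inter-site Wick edges contribute discrete double-gradient propagators $\nabla^{(1)}\nabla^{(2)} G_{U_\eps}(v,w)=O(\eps^2)$ (exactly as in the proof of Theorem~\ref{thm:main_cum2} item~\ref{item2_cum2}), whereas same-site contractions remain $O(1)$. By the linked-cluster theorem only connected diagrams contribute to a cumulant, and parity considerations force every vertex to carry at least two external half-edges; hence connected diagrams have at least $n$ cross-site edges, with equality exactly when the cross-site subgraph is a Hamiltonian cycle through $V$. Diagrams with strictly more cross-site edges contribute $O(\eps^{2(n+1)})$ and disappear after rescaling.

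In each leading-order Hamiltonian-cycle diagram, at every site $v$ the fermions not involved in the external cycle must be paired internally; summing over the choice of $(S_v,i_v)$ and over those internal pairings produces a \emph{local factor} at each $v$. By structural comparison with the $|V|=1$ case of Theorem~\ref{thm:height-fgff}, $\Efp{X_v Y_v}_{U_\eps}=\mathbb{P}(h(v)=1)\to C_2=2/\pi-4/\pi^2$ in the bulk limit (Majumdar--Dhar), this local factor equals $-C_2$ (the sign being absorbed by the reparametrization relating $\kappa(X)$ and $\kappa(-X)$). Combining with Theorem~\ref{thm:main_cum2} item~\ref{item2_cum2} yields $\lim_{\eps\to 0}\eps^{-2n}\kappa(h_{U_\eps}(v_\eps):v\in V)=C_2^n\tilde\kappa_1(v_1,\dots,v_n)$, which coincides with $\lim_{\eps\to 0}\eps^{-2n}\kappa(-C_2 X^\eps_v:v\in V)$ by $n$-linearity of cumulants.

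The main obstacle is the combinatorial identification of the per-site local factor as $-C_2$: one must meticulously track signs arising from Grassmann anticommutations, from the $(-1)^{|S_v|}$ expansion of $Y_v$, and from the sign convention $\kappa(-X^\eps)=(-1)^n\kappa(X^\eps)$. The cleanest route I anticipate is to exploit the structural identity between the site-local sum and the observable $X_v Y_v$ whose one-point expectation is given in closed form by Theorem~\ref{thm:height-fgff}, thereby reducing the identification to the known bulk height-one probability.
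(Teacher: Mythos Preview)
The paper's proof of this corollary is a one-liner: it simply quotes D\"urre's Theorem~2 for the explicit limiting cumulants of the height-one field in $d=2$, compares that formula to~\eqref{eq:cum_limit}, and uses $n$-homogeneity of cumulants. No new computation is performed for this statement.

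Your proposal instead rederives the height-one limiting cumulants from scratch via the $X_vY_v$ representation. This is exactly the route the paper takes \emph{later}, in Theorems~\ref{thm:main_cum3_discrete}--\ref{thm:main_cum3_cont} and Corollary~\ref{cor:const_cum}, to obtain the result in all dimensions without invoking D\"urre. So your strategy is not wrong in spirit; it is simply the proof of a stronger result being used where the paper chose to cite existing literature.

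There is, however, a concrete error in your identification of the local factor. You write that $\Efp{X_vY_v}\to C_2=2/\pi-4/\pi^2$ by Majumdar--Dhar; this is false. The bulk height-one probability on $\mathbb{Z}^2$ is $\mathbb{P}(h(o)=1)=2/\pi^2-4/\pi^3$, and the paper notes explicitly that $C_2=\pi\,\mathbb{P}(h(o)=1)$. The per-site constant that emerges from the leading Hamiltonian-cycle diagrams is \emph{not} the one-point function: after the two external half-edges at $v$ are fixed, what remains is the sum in~\eqref{eq:const_C}, computed via the surgery of Lemma~\ref{lem:surg_tau}, and this differs from $\Efp{X_vY_v}$ precisely because the entry/exit edge is pinned rather than summed over freely. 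Your ``structural comparison with the $|V|=1$ case'' therefore does not close the argument; one really needs the reflection cancellations of \ref{step2} and the surgery of \ref{step4} in the proof of Theorem~\ref{thm:main_cum3_cont} to get $C_2$ rather than $C_2/\pi$.
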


\noindent
The expression for the limiting cumulants of the height-one field can be found in~\citet[Theorem 2]{durre}. The proof of the statement follows from comparing this expression to~\eqref{eq:cum_limit} and recalling that cumulants are homogeneous of degree $n$.

Another corollary of Theorem \ref{thm:main_cum2} is that the cumulants of the degree field of a uniform spanning tree are identical to the cumulants of the field ${\bf X}$ with respect to  the fermionic GFF state.

\begin{corollary}\label{cor:degree_field}
Let $V \subseteq \Lambda^{\mathrm{in}}$. Let the average degree field $(\mathcal{X}_v)_{v\in \Lambda}$ in a uniform spanning tree be defined as
\[
\mathcal{X}_v\coloneqq \frac{1}{2d}\sum_{i=1}^{2d}\1_{\{(v,\,v+e_i)\in T\}}= \frac{1}{2d} \deg_T(v),
\]
where $T$ has the law $\mathbf P$ of the uniform spanning tree on $\Lambda$ (with wired boundary conditions).
\begin{enumerate}
\item\label{item1_corDF} Under the assumptions of Theorem~\ref{thm:main_cum2} item \ref{item1_cum2}, we have that
\begin{equation}\label{eq:ferm-degree}
\kappa\left({\mathcal{X}}_v:\,v\in V\right) = \kappa_\Lambda^{\mathbf 0}\left(X_v:\,v\in V\right)
\end{equation}
and therefore
\[
\begin{split}
 \kappa\left({\mathcal{X}}_v:\,v\in V\right) 
=  - \left (\frac{-1}{2d} \right )^n\sum_{\sigma\in S_{\cycl}(V)} \sum_{\eta:\, V\to\{e_1,\,\dots,\,e_{2d}\}} \prod_{v\in V} \nabla_{\eta(v)}^{(1)}\nabla_{\eta(\sigma(v))}^{(2)} G_\Lambda\left(v,\, \sigma(v)\right).
\end{split}
\]
\item\label{item2_cor_DF} Under the assumptions of Theorem~\ref{thm:main_cum2} item~\ref{item2_cum2}, we have that
\[
\lim_{\eps\to 0} \eps^{-dn} \kappa\left(\mathcal{X}_v^\eps:\,v\in V\right) =
	- \left (\frac{-1}{d} \right )^n  \sum_{\sigma\in S_{\cycl}(V)} \sum_{\eta:\,V\to \{e_1,\,\ldots,\,e_d\}} \prod_{v\in V} \partial_{\eta(v)}^{(1)}\partial_{\eta(\sigma(v))}^{(2)} g_U\left(v,\, \sigma(v)\right).
\]
\end{enumerate}
\end{corollary}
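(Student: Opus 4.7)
My plan is to show that the joint moments of the fermionic field $(X_v)_{v\in V}$ under $\Efd{\cdot}_\Lambda$ coincide with those of the degree field $(\mathcal X_v)_{v\in V}$ under the UST measure $\mathbf P$. Because the joint cumulant is, in both cases, the same combinatorial polynomial in the joint moments (Definitions~\ref{def:cum_mult} and~\ref{def:cum_mult_Gras}), the equality \eqref{eq:ferm-degree} then follows for free. Using $n$-linearity of $\Efd{\cdot}_\Lambda$ and of $\mathbb E[\cdot]$, together with the fact that each bilinear $\nabla_{e}\psi\,\nabla_{e}\bpsi$ has even Grassmann degree (so commutes with the other factors), the expansions of $X_v$ and $\mathcal X_v$ into sums of $2d$ terms reduce the problem to the single-edge identity: for every map $\eta\colon V\to\{e_1,\dots,e_{2d}\}$,
\begin{equation*}
\Efd{\prod_{v\in V}\nabla_{\eta(v)}\psi(v)\,\nabla_{\eta(v)}\bpsi(v)}_{\!\Lambda} = \mathbf P\!\left(\{v,v+\eta(v)\}\in T\ \text{for all } v\in V\right).
\end{equation*}

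To prove this identity, first invoke Lemma~\ref{lem:equiv-fggs} to pass from $\Efp{\cdot}_\Lambda$ to $\Efd{\cdot}_\Lambda$ (legal since $V\subseteq\Lambda^{\mathrm{in}}$ implies every $v+\eta(v)$ lies in $\Lambda$, so no generator at the ghost appears). Encoding $\nabla_{\eta(v)}\psi(v)=(\psi^{T}C)_v$ and $\nabla_{\eta(v)}\bpsi(v)=(B\bpsi)_v$ with the obvious $|\Lambda|\times n$ and $n\times |\Lambda|$ matrices $C$, $B$, and applying Theorem~\ref{thm:sportiello} part~\ref{thm_Wick_two} with $A=-\Delta_\Lambda$, one obtains
\[
\text{LHS}=\det(BA^{-1}C) = \det\!\Bigl(\nabla^{(1)}_{\eta(v)}\nabla^{(2)}_{\eta(u)} G_\Lambda(v,u)\Bigr)_{\!v,u\in V}.
\]
The right-hand side is handled by the Burton--Pemantle transfer-current theorem, which identifies the UST edge set as a determinantal process whose kernel on oriented edges $e=(x,x+e_i)$, $f=(y,y+e_j)$ is precisely $K(e,f)=\nabla^{(1)}_{e_i}\nabla^{(2)}_{e_j}G_\Lambda(x,y)$; hence the right-hand side equals the same determinant, establishing the identity.

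Once \eqref{eq:ferm-degree} is in hand, item~\ref{item1_corDF} follows immediately from Theorem~\ref{thm:main_cum2} item~\ref{item1_cum2}, which computes $\kappa^{\mathbf 0}_\Lambda(-X_v:v\in V)$: the $n$-linearity of the joint cumulant gives $\kappa^{\mathbf 0}_\Lambda(X_v:v\in V)=(-1)^{n}\kappa^{\mathbf 0}_\Lambda(-X_v:v\in V)$, which reassembles into the announced prefactor $-(-1/(2d))^n$. Item~\ref{item2_cor_DF} is obtained in the same way from Theorem~\ref{thm:main_cum2} item~\ref{item2_cum2} applied to $-X_v^\eps$, taking $\eps\to 0$ and pulling out the same $(-1)^n$.

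The delicate point is the sign-and-orientation matching between Wick's determinant and the transfer-current determinant. A related pitfall one must rule out is two of the oriented edges produced by the expansion of $\prod_vX_v$ sharing the same underlying undirected edge: this would annihilate the fermionic product via $(\nabla_e\psi)^2=0$, while the corresponding UST event remains nontrivial. Precisely this is excluded by the good-set hypothesis inherited from Theorem~\ref{thm:main_cum2}: since no two vertices of $V$ are neighbours in $\Lambda$, every choice of $\eta$ yields $n$ pairwise distinct undirected edges, and both sides of the identity behave uniformly determinantally.
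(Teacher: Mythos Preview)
Your proposal is correct and follows essentially the same approach as the paper. The paper's own proof is two sentences: it observes that Proposition~\ref{thm:swan_UST_fGFF} (which gives $\mathbf P(S\subseteq T)=\Efp{\zeta_S}_\Lambda$) implies the joint moments of $(\mathcal X_v)_v$ and $(X_v)_v$ agree, so the entire proof of Theorem~\ref{thm:main_cum2} carries over verbatim to the degree field. You unpack this same argument in more detail---re-deriving the moment identity via Wick's theorem (Theorem~\ref{thm:sportiello}) on one side and the transfer-current theorem on the other, and correctly flagging that the good-set hypothesis ensures the edges $\{v,v+\eta(v)\}$ are pairwise distinct so that both determinants are over the same index set.
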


Let us now turn our attention to the cumulants of another field, namely ${\bf X \, Y}$. We define the matrix $M_\Lambda$ with entries
\begin{equation}\label{eq:M}
    M_\Lambda(f,\,g) \coloneqq 
	\nabla_{\eta^*(f)}^{(1)}\nabla_{\eta^*(g)}^{(2)}G_{\Lambda}(f^{-},\,g^-) ,\quad f,\,g \in E(\Lambda),
\end{equation}
where $\eta^*(f)\in\{e_1,\,\ldots,\,e_{2d}\}$ is the coordinate direction induced by $f\in E(\Lambda)$ on $f^-$ (in the sense that $\eta^*(f)=e_i$ if $f=(f^-,\, f^-+e_i)$). Note that $M$ is the transfer matrix~\cite[Chapter 4]{lyonsperes}. Hereafter, to simplify notation we will omit the dependence of $M_\Lambda$ on $\Lambda$ and simply write $M$.

\begin{theorem}[Cumulants of ${\bf X \, Y}$ on a graph]\label{thm:main_cum3_discrete}
For $n\geq 1$ let $V\coloneqq \left\{v_1,\,\dots,\,v_n\right\}\subseteq \Lambda^{\mathrm{in}}$ be a good set. For a set of edges $\mcE\subseteq E(V)$ and $v\in V$ denote $\mcE_v\coloneqq \{f\in\mcE:\,f^- = v\}\subseteq E_v$. The $n$-th joint cumulants of the field $(X_vY_v)_{v\in V}$ are given by
\begin{align}\label{eq:thm:maincum3}
    \kappa_\Lambda^{\mathbf 0}\left(X_v Y_v:\,v\in V\right) 
	&= 
	\left(\frac{-1}{2d}\right)^n
	\sum_{\mcE\subseteq E(V):\, |\mcE_v|\ge 1 \ \forall v} K(\mcE) 
	\sum_{\tau\in S_{\co}(\mcE)} \sign(\tau) \prod_{f\in \mcE} M\left(f,\,\tau(f)\right)
\end{align}
where $K(\mcE)\coloneqq\prod_{v\in V}K(\mcE_v)$ and $K(\mcE_v)\coloneqq (-1)^{|\mcE_v|}|\mcE_v|$.
\end{theorem}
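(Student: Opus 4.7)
The plan has four steps: (i) simplify $X_v Y_v$ algebraically using Grassmann properties; (ii) use multilinearity of cumulants to reduce to cumulants of products of single-edge ``gradient squared'' observables $a_f$; (iii) evaluate moments of these products via the fermionic Wick theorem to get determinants of submatrices of $M$; (iv) invoke a Möbius-type inversion on the partition lattice to convert these determinantal moments into the ``connected'' sum over $S_\co(\mcE)$.

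For Step~(i), write $a_f \coloneqq \nabla_{\eta^*(f)}\psi(f^-)\nabla_{\eta^*(f)}\bpsi(f^-)$ for each $f\in E_v$. Each $a_f$ is even (a product of two generators), so the $a_f$'s pairwise commute, and by Pauli's principle \eqref{eq:squares-are-zero} we have $a_f^2=0$. Hence $a_f(1-a_f)=a_f$, so
\[
X_v Y_v \;=\; \frac{1}{2d}\sum_{i=1}^{2d} a_{(v,v+e_i)}\!\!\!\prod_{j\neq i}(1-a_{(v,v+e_j)}) \;=\; -\frac{1}{2d}\sum_{\emptyset\neq \mcE_v\subseteq E_v} (-1)^{|\mcE_v|}|\mcE_v|\prod_{f\in \mcE_v}a_f,
\]
after reindexing by $T = S\cup\{i\}$ (the factor $|T|$ records the number of ways to distinguish one element of $T$ as the contribution from $X_v$). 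Taking the product over $v\in V$ and using multilinearity of joint cumulants (Step~(ii)) gives
\[
\kappa_\Lambda^{\mathbf 0}(X_vY_v : v\in V) = \left(\tfrac{-1}{2d}\right)^n \sum_{\mcE:\,|\mcE_v|\ge 1\,\forall v} K(\mcE)\, \kappa_\Lambda^{\mathbf 0}\!\left(\textstyle\prod_{f\in\mcE_v}a_f : v\in V\right).
\]

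For Step~(iii), for any $B\subseteq V$ and $\mcE_B \coloneqq \bigsqcup_{v\in B}\mcE_v$, view each $a_f$ as $(\bm\psi^T C_f)(D_f\bm\bpsi)$ where $C_f, D_f\in\R^{\Lambda}$ encode the two gradient differences. Theorem~\ref{thm:sportiello} item~\ref{thm_Wick_two}, with $(-\Delta_\Lambda)^{-1}=G_\Lambda$, then yields
\[
\left\langle\prod_{v\in B}\prod_{f\in\mcE_v}a_f\right\rangle_\Lambda^{\mathbf 0} \;=\; \det(M_{\mcE_B,\mcE_B}) \;=\; \sum_{\tau\in S_{\mcE_B}}\sign(\tau)\prod_{f\in\mcE_B}M(f,\tau(f)),
\]
with $M$ as in \eqref{eq:M}.

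Step~(iv) is the crux. Given $\tau\in S_{\mcE_B}$, declare $v\sim v'$ iff some $f\in\mcE_v$ satisfies $\tau(f)\in\mcE_{v'}$, and close transitively to obtain a partition $\mathrm{conn}(\tau)\in\Pi(B)$. Permutations with $\mathrm{conn}(\tau)=\pi'$ decompose as products of permutations of the pieces $\mcE_{B'}$, $B'\in\pi'$, with signs multiplying. Writing $C(B')\coloneqq \sum_{\tau\in S_\co(\mcE_{B'})}\sign(\tau)\prod_f M(f,\tau(f))$ for the ``connected'' part, we get the relation
\[
\det(M_{\mcE_B,\mcE_B}) \;=\; \sum_{\pi'\in \Pi(B)}\prod_{B'\in\pi'} C(B').
\]
Möbius inversion on the partition lattice (equivalently, iterating the moment-to-cumulant inversion in Definition~\ref{def:cum_mult_Gras}) now gives
\[
C(V) \;=\; \sum_{\pi\in\Pi(V)}(-1)^{|\pi|-1}(|\pi|-1)!\prod_{B\in\pi}\det(M_{\mcE_B,\mcE_B}) \;=\; \kappa_\Lambda^{\mathbf 0}\!\left(\textstyle\prod_{f\in\mcE_v}a_f : v\in V\right).
\]
Plugging this into the result of Step~(ii) produces exactly \eqref{eq:thm:maincum3}.

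The main obstacle is Step~(iv): matching the ``all permutations indexed by induced $V$-partitions'' expansion of $\det(M_{\mcE_B,\mcE_B})$ with the cumulant formula requires careful sign bookkeeping (to see that $\sign$ is multiplicative across blocks) and an appeal to the standard exponential/Möbius formula on $\Pi(V)$. Once this ``connected-determinant equals joint cumulant'' identity is established, the rest of the proof is bookkeeping of the factor $K(\mcE)$ and the $(-1/2d)^n$ prefactor coming from Step~(i).
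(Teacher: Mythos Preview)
Your proof is correct and follows essentially the same route as the paper: both reduce the cumulant to connected permutations via the moment--cumulant relation on the partition lattice, with the paper's Stirling-number identity \eqref{eq:magic-stirling} being exactly the M\"obius inversion you invoke in Step~(iv). Your Step~(i) extracts the factor $K(\mcE)$ up front from Grassmann nilpotency, whereas the paper obtains it later by counting the pairs $(\eta,A)$ that yield the same edge set $\mcE$; otherwise the arguments coincide.
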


\noindent
We refer the reader to Subsection~\ref{subsec:graphs-and-permutations} for a definition of the connected permutations $S_{\co}(A)$ of a finite set $A$.

Before we proceed to the next theorem, we remind the reader of the notation $\det (A)_I$, introduced before Theorem~\ref{thm:sportiello}, to denote the determinant of the matrix $A$ with its rows and columns restricted to the indexes in $I$.

\begin{theorem}[Scaling limit of the cumulants of ${\bf X \, Y}$]\label{thm:main_cum3_cont}
For $n\ge 2$ let $V\coloneqq \left\{v_1,\,\dots,\,v_n\right\}\subseteq U$ be such that $\dist(V,\,\partial U)>0$. If $v_i \neq v_j$, for all $i\neq j$, then
\begin{align}\label{eq:cum_limit3}
    \nonumber 
    \tilde\kappa_2(v_1,\,\dots,\,v_n) &\coloneqq \lim_{\eps\to 0} \eps^{-dn} \kappa_\Lambda^{\mathbf 0}\left(X_v^\eps \, Y_v^\eps:\,v\in V\right) 
	\\ &= - (C_d)^n \sum_{\sigma\in S_{\cycl}(V)} \sum_{\eta:\,V\to \{e_1,\,\ldots,\,e_d\}} \prod_{v\in V} \partial_{\eta(v)}^{(1)}\partial_{\eta(\sigma(v))}^{(2)} g_U\left(v,\, \sigma(v)\right),
\end{align}
where the constant $C_d$ is given by
\begin{align}\label{eq:const_C}
  C_d = \frac{1}{d} \sum_{\mcE_o \subseteq E_o:\, \mcE_o\ni e_1} 
(-1)^{|\mcE_o|}|\mcE_o| \left[ \det\left(\overline M \right)_{\mcE_o\setminus{\{e_1\}}} + \1_{\{\mcE_o\ni-e_1\}} \det\big(\overline M'\big)_{\mcE_o\setminus{\{e_1\}}}\right],
\end{align}
where for any $f,\,g\in E_o$
\[
\overline M(f,\,g) =
\nabla_{\eta^*(f)}^{(1)}\nabla_{\eta^*(g)}^{(2)}G_{0}(f^{-},\,g^-)
\]
and
\begin{equation}\label{eq:Mprime}
\overline M'(f,\,g) =
\begin{cases}
\overline{M}(e_1,\,g) & \text{ if } f=  -e_1, \\
\overline{M}(f,\,g) & \text{ if } f \neq -e_1.
\end{cases} 
\end{equation}

\end{theorem}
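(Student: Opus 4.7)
I plan to start from the closed-form discrete cumulant of Theorem~\ref{thm:main_cum3_discrete} applied to $\Lambda=U_\eps$ and determine which $(\mcE,\tau)$ summands survive rescaling by $\eps^{-dn}$ as $\eps\to 0$. The first ingredient is an entrywise asymptotic for the matrix $M$. By standard gradient estimates on the discrete Dirichlet Green's function on $U_\eps$, for $f\in\mcE_v$ and $g\in\mcE_w$ with $v\neq w$ distinct macroscopic points one has
\[
M(f,g)=\eps^d\,\partial^{(1)}_{\eta^*(f)}\partial^{(2)}_{\eta^*(g)} g_U(v,w)+o(\eps^d),
\]
while if $f$ and $g$ belong to the same block $\mcE_v$ then $M(f,g)\to \overline M(f,g)$, a finite $\Z^d$-local quantity obtained from $G_0$. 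The hypothesis $\dist(V,\partial U)>0$ makes these asymptotics uniform in the finitely many edges involved in each $\mcE_v$.

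The second step is an order-of-magnitude bound. For $\tau\in S_{\co}(\mcE)$, let $k(\tau)$ count the pairs $(f,\tau(f))$ whose endpoints lie in different blocks; the above gives $\prod_{f}M(f,\tau(f))=O(\eps^{d\,k(\tau)})$. The inter-block action of $\tau$ induces a functional digraph on $V$ with equal in- and out-degrees at every vertex, and connectivity of $\tau$ forces this digraph to be connected on $V$, whence $k(\tau)\geq n$ with equality if and only if the inter-block structure is a single $n$-cycle, i.e.\ encoded by a cyclic $\sigma\in S_{\cycl}(V)$. Only such terms survive the $\eps^{-dn}$ rescaling.

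For a surviving $\tau$ there is, at each $v\in V$, a unique outgoing edge $f_v^{\mathrm{out}}\in\mcE_v$ with $\tau(f_v^{\mathrm{out}})\in\mcE_{\sigma(v)}$ and a unique incoming edge $f_v^{\mathrm{in}}\in\mcE_v$ with $\tau^{-1}(f_v^{\mathrm{in}})\in\mcE_{\sigma^{-1}(v)}$, and $\tau$ restricts to a bijection $\mcE_v\setminus\{f_v^{\mathrm{out}}\}\to\mcE_v\setminus\{f_v^{\mathrm{in}}\}$. The weight factorises as an inter-block skeleton $\prod_v M(f_v^{\mathrm{out}},f_{\sigma(v)}^{\mathrm{in}})$---which after rescaling tends to $\eps^{dn}\prod_v \partial^{(1)}_{\eta_v^{\mathrm{out}}}\partial^{(2)}_{\eta_{\sigma(v)}^{\mathrm{in}}} g_U(v,\sigma(v))$ up to signs produced by negative directions---times an intra-block product that factors over $v$. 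Summing the intra-block factor over $\mcE_v$, the pair $(f_v^{\mathrm{out}},f_v^{\mathrm{in}})$ and the internal bijection, and using translation invariance together with the lattice reflection symmetry $e_i\leftrightarrow -e_i$, yields a universal local constant; the two summands in the bracket of~\eqref{eq:const_C} correspond respectively to the sub-cases $f_v^{\mathrm{out}}=f_v^{\mathrm{in}}$ (giving $\det(\overline M)_{\mcE_o\setminus\{e_1\}}$) and $f_v^{\mathrm{out}}=-f_v^{\mathrm{in}}$ (requiring $-e_1\in\mcE_o$ and giving $\det(\overline M')_{\mcE_o\setminus\{e_1\}}$). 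Combining with the external prefactor $(-1/(2d))^n$ and the signature $(-1)^{n-1}$ of the $n$-cycle on inter-block edges produces the claimed $-(C_d)^n$.

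The main obstacle is this final bookkeeping: one must simultaneously track the signature of $\tau$ after excising the $n$ inter-block edges, the alternating weights $K(\mcE_v)=(-1)^{|\mcE_v|}|\mcE_v|$, and the parities of negative directions in $\eta_v^{\mathrm{in}}$ and $\eta_v^{\mathrm{out}}$, and prove that (i) only configurations with $|\eta_v^{\mathrm{in}}|=|\eta_v^{\mathrm{out}}|$ survive---collapsing the outer double sum over directions to the single sum $\sum_{\eta:V\to\{e_1,\ldots,e_d\}}$ appearing in the theorem---and (ii) the remaining inner sum produces precisely the two determinants in $C_d$. A secondary, mostly technical point is ensuring uniformity of the Green's function asymptotics across the (finitely many) edge positions within each block, which follows from standard pointwise bounds on gradients of the lattice Green's function (using the potential-kernel representation of $G_0$ when $d=2$).
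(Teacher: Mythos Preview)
Your plan matches the paper's four-step proof almost exactly: reduce from connected to ``bare'' permutations by the $\eps^d$-per-jump estimate (your $k(\tau)\geq n$ argument is precisely the paper's Step~1), then split each surviving $\tau$ into a global $\sigma\in S_{\cycl}(V)$ and local data at each vertex (Steps~3--4). The asymptotics you quote for $M$ are exactly Lemma~\ref{lem:cip4} and equation~\eqref{eq:cases_M}.

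The part you flag as ``the main obstacle'' is genuinely the crux, and your sketch does not yet contain the mechanism. Point~(i)---that only configurations with $f_v^{\mathrm{in}}$ parallel to $f_v^{\mathrm{out}}$ survive---is \emph{not} a consequence of bookkeeping signs and parities alone; it requires an explicit pairwise cancellation. The paper (Step~2) constructs, for any bare $\tau$ entering $v$ through $f$ and exiting through $f'$ with $\langle f,f'\rangle=0$, a partner $(\mcE',\rho)$ by reflecting $\mcE_v$ through the axis of $f$: all intra-block $\overline M$ factors are preserved because $\nabla^{(1)}_{e_i}\nabla^{(2)}_{e_j}G_0(o,o)=\nabla^{(1)}_{e_i}\nabla^{(2)}_{-e_j}G_0(o,o)$ for $i\neq j$ on $\Z^d$, while the single inter-block exit factor flips sign in the continuum limit, so the two contributions cancel. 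Without this pairing, the local sum over $(f_v^{\mathrm{in}},f_v^{\mathrm{out}})$ has no reason to collapse to the parallel case.

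For point~(ii) and the signature, the paper does not attempt a direct sign count either; it introduces a ``surgery'' (Lemmas~\ref{lem:bij_tau}--\ref{lem:surg_tau}) that rewrites $\tau$ at each $v$ as a local permutation $\omega_v^\tau\in S(\mcE_v\setminus\{\eta(v)\})$ composed with a global $n$-cycle, proving $\sign(\tau)=\gamma(v)\sign(\tau\setminus\omega_v^\tau)\sign(\omega_v^\tau)$ and a matching factorisation of the $\overline M$-product through a modified matrix $\overline M^\gamma$. Iterating over $v$ turns the local sum into $\sum_\omega\sign(\omega)\prod\overline M^\gamma(f,\omega(f))$, which is literally a determinant---hence the two determinant terms in~\eqref{eq:const_C} for $\gamma=+1$ and $\gamma=-1$. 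Your proposal has the right shape but would need both of these concrete constructions to close.
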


\begin{remark}
    As expected in virtue of Theorem~\ref{thm:height-fgff} and Corollary~\ref{cor:lim_cum_X}, for $d=2$ one obtains $$C_2 = \frac{2}{\pi}- \frac{4}{\pi^2}=\pi\,\mathbb{P}(\rho(o)=1)$$ by using the known values of the potential kernel (see for example \citet[p. 149]{spitzer}).
\end{remark}

\begin{remark}\label{rmk:confcov}
In $d=2$ the joint moments of ${\bf X Y}$ are also conformally covariant with scale dimension $2$,
just like the height-one field in \citet{durre} and the squared of the gradient (bosonic) GFF in \citet{cipriani2022properties}.
\end{remark}
The following result establishes the limiting cumulants of the height-one field of the ASM on the hypercubic lattice in any dimension.
\begin{corollary}[Height-one field limiting cumulants in $d\ge 2$]\label{cor:const_cum}
Under the assumptions of Theorem~\ref{thm:main_cum2} item \ref{item2_cum2}, for any $d\ge 2$ we have that
\[
    \lim_{\eps\to 0}\eps^{-dn}\kappa\left(h_{U_\eps}(v):\,v\in V\right) = - (C_d)^n \sum_{\sigma\in S_{\cycl}(V)} \sum_{\eta:\,V\to \{e_1,\,\ldots,\,e_d\}} \prod_{v\in V} \partial_{\eta(v)}^{(1)}\partial_{\eta(\sigma(v))}^{(2)} g_U\left(v,\, \sigma(v)\right),
\]
with $C_d$ as in~\eqref{eq:const_C}.
\end{corollary}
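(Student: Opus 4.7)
The plan is to combine Theorem~\ref{thm:height-fgff} and Theorem~\ref{thm:main_cum3_cont} in a direct way; Corollary~\ref{cor:const_cum} is essentially a corollary obtained by passing from moments to cumulants and then applying the scaling limit.

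First, I would note that Theorem~\ref{thm:height-fgff} gives the finite-volume identity
\[
\E\left(\prod_{v\in W}h_\Lambda(v)\right) = \Efp{\prod_{v\in W} X_v Y_v}_\Lambda
\]
for every subset $W$ of a good set $V\subseteq \Lambda^{\mathrm{in}}$ (note that every subset of a good set is again good, as neither the non-adjacency nor the connectivity of the complement to $g$ can be destroyed by removing sites). The corollary, however, is phrased in terms of $\kappa_\Lambda^{\mathbf 0}$, so the second ingredient is Lemma~\ref{lem:equiv-fggs}, which asserts that the pinned state $\Efp{\cdot}_\Lambda$ and the Dirichlet state $\Efd{\cdot}_\Lambda$ agree on observables depending only on the generators indexed by $\Lambda$. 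Since $V\subseteq \Lambda^{\mathrm{in}}$, the product $\prod_{v\in W}X_vY_v$ involves only generators at sites of $V$ and their nearest neighbors, all lying in $\Lambda$, so the lemma applies and moments under the two states coincide.

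The second step is to transfer from moments to cumulants. Comparing Definition~\ref{def:cum_mult} with Definition~\ref{def:cum_mult_Gras}, both joint cumulants are defined via the same Möbius-type inclusion–exclusion over set partitions, applied to the corresponding moments. Hence, equality of moments for every subset $W\subseteq V$ forces equality of the joint cumulants:
\[
\kappa\left(h_\Lambda(v):\,v\in V\right) = \kappa_\Lambda^{\mathbf 0}\left(X_v Y_v:\,v\in V\right).
\]

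Finally, I would specialize to the rescaled graph $U_\eps = U/\eps\cap\Z^d$ with the set of sites $V_\eps\coloneqq\{v_\eps:\,v\in V\}$. For $\eps$ small enough, the hypothesis $\dist(V,\partial U)>0$ together with the assumption $v_i\neq v_j$ for $i\neq j$ guarantees that distinct $v_\eps$ are at graph distance $\gg 1$ from each other and from $\partial U_\eps$, so $V_\eps\subseteq U_\eps^{\mathrm{in}}$ is a good set in the sense of Definition~\ref{def:goodset}. The displayed identity above is therefore valid on $U_\eps$, and multiplying by $\eps^{-dn}$ and taking $\eps\to 0$ the right-hand side converges by Theorem~\ref{thm:main_cum3_cont} to $\tilde\kappa_2(v_1,\ldots,v_n)$, with the explicit constant $C_d$ given in \eqref{eq:const_C}. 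This is precisely the conclusion of the corollary.

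I do not foresee a genuine obstacle: the only nontrivial check is that $V_\eps$ is a good set in $U_\eps^{\mathrm{in}}$ for all sufficiently small $\eps$, which follows from the two geometric hypotheses above. Everything else is a direct chaining of previously proved statements.
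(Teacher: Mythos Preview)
Your proposal is correct and matches the paper's approach: the corollary is treated as an immediate consequence of Theorem~\ref{thm:height-fgff} (together with Lemma~\ref{lem:equiv-fggs} to pass from the pinned to the Dirichlet state) and Theorem~\ref{thm:main_cum3_cont}, with the moment-to-cumulant transfer following from the identical M\"obius formulas in Definitions~\ref{def:cum_mult} and~\ref{def:cum_mult_Gras}. Your explicit verification that $V_\eps$ is a good set in $U_\eps^{\mathrm{in}}$ for small $\eps$ is a sensible addition that the paper leaves implicit.
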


Finally, we can view our field as a distribution acting on smooth test functions in the following sense. For any functions $f_1,\,\dots,\,f_n \in\mathcal C_c^\infty(U)$, we define
\begin{equation}\label{eq:def:kappahat1}
    \widehat{\kappa}_1\left(f_1,\,\dots,\,f_n\right) \coloneqq \int_{U^n} \kappa_\Lambda^{\bzero}\left(-X^\eps_{x_1},\,\ldots,\,-X^{\eps}_{x_n}\right) \prod_{i\in[n]} f_i(x_i) \, \d x_i ,
\end{equation}
respectively
\begin{equation}\label{eq:def:kappahat2}
    \widehat{\kappa}_2\left(f_1,\,\dots,\,f_n\right) \coloneqq \int_{U^n} \kappa_\Lambda^{\bzero}\left(X^\eps_{x_1}Y^\eps_{x_1},\,\ldots,\,X^{\eps}_{x_n}Y^\eps_{x_n}\right) \prod_{i\in[n]} f_i(x_i) \, \d x_i .
\end{equation}

\begin{theorem}\label{thm:scaling}
For any functions $f_1,\,\dots,\,f_n \in \mathcal C^\infty_c(U)$ with pairwise disjoint supports one has
\begin{equation}\label{eq:thm:scaling}
    \lim_{\eps\to0} \eps^{-dn}\, \widehat{\kappa}_1\left(f_1,\,\dots,\,f_n\right) = \int_{U^n} \tilde\kappa_1(x_1,\,\dots,\,x_n) \prod_{i\in[n]} f_i(x_i) \, \d x_i ,
\end{equation}
with $\tilde\kappa_1$ defined as the limit in~\eqref{eq:cum_limit}. An analogous result is obtained exchanging $\widehat \kappa_1$ and $\tilde\kappa_1$ by $\widehat\kappa_2$ and $\tilde\kappa_2$ respectively in~\eqref{eq:thm:scaling}.
\end{theorem}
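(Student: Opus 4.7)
The plan is to realize $\widehat\kappa_1$ as a (weighted) Riemann sum over the rescaled lattice and to combine the pointwise limit from Theorem~\ref{thm:main_cum2}~\ref{item2_cum2} with a uniform off-diagonal bound via dominated convergence. Writing $X^\eps_x \coloneqq X_{\lfloor x/\eps\rfloor}$ as the piecewise-constant extension and $C_\eps(u) \coloneqq \eps u + [0,\eps)^d$, smoothness of $f_i$ gives $\int_{C_\eps(u)} f_i(x)\,\d x = \eps^d f_i(\eps u) + O(\eps^{d+1})$, hence
\[
    \eps^{-dn}\widehat\kappa_1(f_1,\dots,f_n) = \sum_{u_1,\dots,u_n \in U_\eps} \bigl[\eps^{-dn}\kappa_\Lambda^{\bzero}(-X_{u_1},\dots,-X_{u_n})\bigr] \prod_{i=1}^{n} f_i(\eps u_i)\, \eps^d \;+\; o(1),
\]
where the remainder collects the $O(\eps)$ errors on each cube and vanishes by compact support of the $f_i$'s in $U$. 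This expression is already formally a Riemann sum for the right-hand side of~\eqref{eq:thm:scaling}, so modulo interchanging limit and sum the result is immediate from the pointwise convergence in Theorem~\ref{thm:main_cum2}~\ref{item2_cum2}.

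The key step is a uniform dominating bound. Since the $f_i$ have pairwise disjoint supports, there exists $\delta>0$ such that $\prod_i f_i(\eps u_i)\neq 0$ forces $\|\eps u_i - \eps u_j\|\geq \delta$ for all $i\neq j$, i.e.\ $\|u_i-u_j\|\geq \delta/\eps$. By Theorem~\ref{thm:main_cum2}~\ref{item1_cum2}, the cumulant is a finite sum of products $\prod_v \nabla^{(1)}_{\eta(v)}\nabla^{(2)}_{\eta(\sigma(v))} G_{U_\eps}(v,\sigma(v))$ over cyclic permutations and direction maps, so it suffices to show the uniform off-diagonal estimate
\[
    \bigl|\nabla^{(1)}_{e_i}\nabla^{(2)}_{e_j} G_{U_\eps}(u,v)\bigr| \;\leq\; \frac{C}{\|u-v\|^{d}}, \qquad \|u-v\|\geq 1,
\]
with a constant $C$ independent of $\eps$ and of the (interior) location in $U_\eps$. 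Once this is in place, each of the $n$ factors contributes $C/(\delta/\eps)^d$, so $|\eps^{-dn}\kappa_\Lambda^{\bzero}(-X_{u_1},\dots,-X_{u_n})| \leq C_n \delta^{-dn}$ uniformly in $\eps$, and dominated convergence applied to the Riemann sum delivers~\eqref{eq:thm:scaling}.

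The identical strategy yields the analogous statement for $\widehat\kappa_2$: by Theorem~\ref{thm:main_cum3_discrete}, $\kappa_\Lambda^{\bzero}(X_v Y_v:\,v\in V)$ is a finite signed sum, indexed by edge subsets $\mathcal E\subseteq E(V)$ and connected permutations $\tau\in S_{\co}(\mathcal E)$, of products $\prod_{f\in\mathcal E} M(f,\tau(f))$ whose entries are, again, discrete double gradients of $G_{U_\eps}$. The same off-diagonal bound controls every term, while Theorem~\ref{thm:main_cum3_cont} supplies the pointwise limit $\tilde\kappa_2$, so dominated convergence closes the argument unchanged.

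The hard part is the uniform off-diagonal gradient estimate for the \emph{Dirichlet} Green's function $G_{U_\eps}$: the corresponding bound for the infinite-volume kernel $G_0$ is classical (see~\cite[Sec.~1.5--1.6]{lawler2013intersections}), but it must be transferred to $G_{U_\eps}(u,v)=G_0(u-v)-\E_u[G_0(S_{\tau_{U_\eps}}-v)]$ with a constant independent of $\eps$ and of the location of $u,v$. Smoothness of $\partial U$ and positive distance of the supports of the $f_i$'s from $\partial U$ are what make the harmonic remainder uniformly controllable; writing out this control, and verifying that the Riemann-sum error and the boundary-layer contribution are both $o(1)$ after dividing by $\eps^{dn}$, constitute the principal technical work.
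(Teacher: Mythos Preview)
Your approach is essentially the same as the paper's: bound the double discrete gradients of $G_{U_\eps}$ uniformly off-diagonal, then apply dominated convergence together with the pointwise limits from Theorems~\ref{thm:main_cum2} and~\ref{thm:main_cum3_cont}. Two simplifications are available: the Riemann-sum rewriting is unnecessary since $\widehat\kappa_1$ is already defined as an integral over $U^n$ with piecewise-constant integrand, and the ``principal technical work'' you flag---the uniform estimate $|\nabla^{(1)}_{e_i}\nabla^{(2)}_{e_j} G_{U_\eps}(u,v)|\le C\|u-v\|^{-d}$ on compacts of $U$---is already established in \cite[Lemma~5]{cipriani2022properties}, which the paper simply cites.
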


\begin{remark}\label{rmk:disjoint}
We highlight the importance of the non-overlapping supports of the test functions, which allows the joint cumulants to have a non-trivial limit when scaled by $\eps^{-d}$ for each function.
If instead we remove the assumption of disjoint supports, and we consider $\mathbf{X}^\eps$ respectively $\mathbf{X}^\eps\mathbf{Y}^\eps$ as distributions acting on test functions, we need to scale the field by $\eps^{-d/2}$, in which case we can show that
\[
\lim_{\eps\to 0}\eps^{-\frac{d}{2}}\widehat{\kappa}_1\left(f_1,\,f_2\right)=\inpr{f_1}{f_2}_{L^2(U)},
\]
and that
\[
\lim_{\eps\to 0}\eps^{-\frac{dn}{2}}\widehat{\kappa}_1\left(f_1,\,\dots,\,f_n\right) =0,\quad n\ge 3,
\]
for any $f_1,\,\ldots,\,f_n\in C^\infty_c(U)$, and similarly for $\widehat\kappa_2$.

A first result in this direction was found in \citet[Theorem 5]{kassel2015transfer}, where the authors proved convergence of the law of so-called ``pattern fields'' for the UST. These can be seen as the random variable obtained by applying the height-one field (or a similar local observable of the UST) to a single constant test function $f_1 \equiv 1$.

However, it is possible to extend their convergence to the full setting mentioned above by following the strategy of \citet[Theorem 2]{cipriani2022properties}, where the authors proved convergence  in distribution in an appropriate Besov space.
This proof relies on bounding the joint cumulants of the field, and since we know the decay of the double gradients of the Green's function (see \eqref{eq:recall_G}) the proof can be carried through by performing exactly the same steps.
\end{remark}

\section{Proofs} \label{sec:proofs} 
Before going to the proofs of the main theorems we will give some additional definitions and auxiliary results.

\subsection{Permutations, graphs and partitions}\label{subsec:graphs-and-permutations}

In this subsection, we introduce more notation stated in Theorems~\ref{thm:main_cum2} and~\ref{thm:main_cum3_discrete}. 
\paragraph{General definitions.}\label{par:gen_perm} Let $\Lambda$ be a finite and connected (in the usual graph sense) subset of $\mathbb{Z}^d$ and $V \subseteq \Lambda$ be a good set according to Definition~\ref{def:goodset}. As $V$ is good, notice that every edge in $E(V)$ is connected to exactly one vertex in $V$.

Recal that for any finite set $A$ we denote the set of permutations of $A$ by $S(A)$.
Furthermore, we write $S_{\cycl}(A)$ to denote the set of \emph{cyclic} permutations of $A$ (without fixed points). Finally recall the set $\Pi(A)$ of partitions of $A$.

We will use the natural partial order between partitions \citep[Chapter 2]{taqqu}; that is, given two partitions $\pi$, $\tilde{\pi}$, we say that $\pi \le \tilde{\pi}$ if for every $B \in \pi$ there exists a $\tilde{B} \in \tilde{\pi}$ such that $B \subseteq \tilde{B}$.
If $\pi \le \tilde{\pi}$, we say that $\pi$ is finer than $\tilde{\pi}$ (or that $\tilde{\pi}$ is coarser than $\pi$). If $\sigma \in S(V)$, we denote as $\pi_\sigma \in \Pi(V)$ the partition given by the disjoint cycles of $\sigma$. This is the finest partition such that $\sigma(B) = B$ for all $B \in \pi_\sigma$.
\\

Fix $A\subseteq E(V)$ such that $E_v \cap A \neq \emptyset$ for all $v \in V$, i.e. we have a set of edges with at least one edge per vertex of $V$.
Let $\tau \in S(A)$ be a permutation of edges in $A$.

\paragraph{Permutations: connected and bare.}\label{par:edge_ver}

We define the multigraph $V_\tau= \left(V,\, E_\tau(V)\right)$ induced by $\tau$ in the following way. For each pair of vertices $v \neq w$ in $V$,  we add one edge between $v$ and $w$ for each $f \in E_v, f' \in E_w$ such that either $\tau (f)= f'$ or $\tau (f')= f$. If  $v=w$, we add no edge, so $\deg_{V_\tau}(v) \le |E_v|$. 

\begin{definition}[Connected and bare permutations]\label{def:connected-permutations}
	Let $\Lambda \subseteq \mathbb{Z}^d$ finite, $V$ good as in Definition~\ref{def:goodset}, $A\subseteq E(V)$ and 
	$\tau \in S(A)$ be given.
	\begin{itemize}
		\item We say that $\tau$ is \emph{connected} if the multigraph $V_\tau$ is a connected multigraph.
  
		\item We say that $\tau$ is \emph{bare} if it is connected and $\deg_{V_\tau}(v)=2$ for all $v\in V$ (it is immediate to see that the latter condition can be replaced by $|E_\tau(V)|=|V|$). 
	\end{itemize}
	We will denote by $S_{\co}(A)$ the set of connected permutations in $S(A)$, and by $S_{\bare}(A)$ the set of bare permutations. See Figures~\ref{fig:connected} and~\ref{fig:bare} for some examples, where the mapping $\tau(f)=f'$ is represented via an arrow $f\to f'.$
\end{definition}

\begin{figure}[ht!]
\begin{minipage}{0.45\textwidth}
    \centering
    \includegraphics[scale=.6]{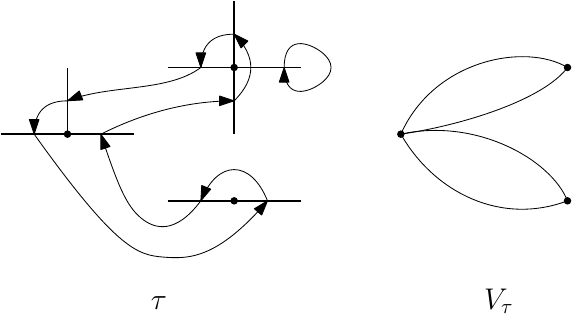}
    \caption{A connected permutation $\tau$ on edges and the multigraph $V_\tau$ associated to it, in $d=2$.
    Notice that this permutation is \textbf{not} bare.}
    \label{fig:connected}
\end{minipage}\hfill
\begin{minipage}{0.45\textwidth}
    \centering
    \includegraphics[scale=0.6]{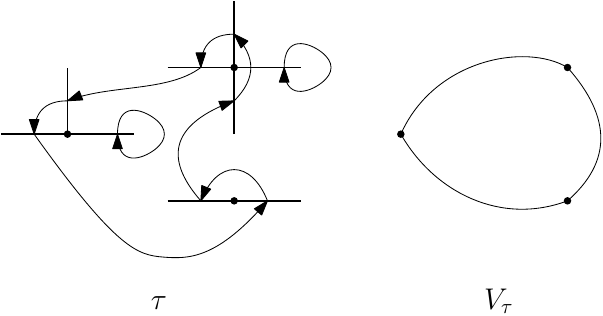}
    \caption{A bare permutation $\tau$ on edges and the multigraph $V_\tau$ associated to it, in $d=2$.}
    \label{fig:bare}
\end{minipage}
\end{figure}

For $\tau$ bare we have, by definition, that for each $v$ there are exactly two edges $f,\,f^\prime \in A$ (possibly the same) such that $\tau (f') \not \in E_v$ and $\tau^{-1} (f) \not \in E_v$. We will refer to this as $\tau$ enters $v$ through $f$ and exits $v$ through $f^\prime$. Therefore, for any bare permutation $\tau \in S_{\bare}(A)$, we can define an induced permutation on vertices $\sigma = \sigma_\tau \in S_{\cycl}(V)$ given by $\sigma(v)=w$ if there there exists (a unique) $f \in E_v$ and $f' \in E_w$ such that $\tau(f)=f'$. Figure~\ref{fig:bare_to_sigma} shows an example in $d=2$.

\begin{figure}[ht!]
    \centering
    \includegraphics[scale=.6]{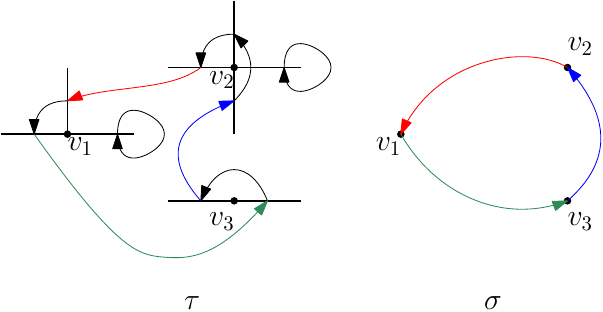}
    \caption{A bare permutation $\tau$ on edges and the induced permutation $\sigma$ on points, in $d=2$.}
    \label{fig:bare_to_sigma}
\end{figure}

Any permutation of $\tau \in S(A)$ induces a partition $\pi_\tau$ on $A$ given by the disjoint cycles in $\tau$. Likewise, the partition $\pi^V_\tau \in \Pi(V)$  given by the connected components of $V_\tau$ gives the finest partition on $V$ such that $\tau \left(\cup_{v \in B}  A_v\right) = \cup_{v \in B} A_v$ for all $B \in \pi^V_\tau$. If $\tau$ is connected, then $\pi^V_\tau = V$.

\subsection{Auxiliary results}
We start this section with a result that states $\efd{\cdot}_{\Lambda}$ and $\efp{\cdot}_{\Lambda}$ coincide over observables with support inside of $\Lambda$. 

\begin{lemma}\label{lem:equiv-fggs}
	 For all $F \in \Omega^{2\Lambda}$ we have that
	\begin{equation}\label{eq:equiv-fggs}
		\efd{F}_\Lambda 
		=
		\efp{F}_{\Lambda}.
	\end{equation}
\end{lemma}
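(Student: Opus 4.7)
The plan is to exploit the nilpotency of the ghost generators $\psi_g$ and $\bpsi_g$. The key observation is that left-multiplication by the prefactor $\psi_g\bpsi_g$ in the definition of $\efp{F}_\Lambda$ annihilates every monomial in $\Omega^{2\Lambda^g}$ that contains either $\psi_g$ or $\bpsi_g$: indeed $(\psi_g\bpsi_g)\,\psi_g = -\psi_g^2\bpsi_g = 0$ and $(\psi_g\bpsi_g)\,\bpsi_g = -\psi_g\bpsi_g^2=0$ by~\eqref{eq:squares-are-zero} and~\eqref{eq:anticommute}. Hence, when evaluating $\psi_g\bpsi_g\,\cdot\,(\text{anything})$ we may freely replace the ``anything'' by its projection onto monomials free of $\psi_g$ and $\bpsi_g$.

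With this in hand, I would decompose the exponent appearing in $\efp{F}_\Lambda$. By the definition of $\Delta^g$ one has $\Delta^g(u,v)=\Delta_\Lambda(u,v)$ whenever $u,v\in\Lambda$, so the bilinear form splits as
\begin{equation*}
    \inpr{\bm{\psi}}{-\Delta^g\bm{\bpsi}}_{\Lambda^g}
    = \inpr{\bm{\psi}}{-\Delta_\Lambda\bm{\bpsi}} + R,
\end{equation*}
where the remainder $R$ collects only terms containing at least one of $\psi_g,\bpsi_g$ (the ghost-to-boundary couplings together with the diagonal $\psi_g\bpsi_g$ term from $-\Delta^g(g,g)$). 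The additional contribution $\inpr{\delta_g}{\bm\psi\bm\bpsi}_{\Lambda^g} = \psi_g\bpsi_g$ also lives entirely in such terms. Expanding the exponential as the finite sum given by Definition~\ref{def:func-of-grasmmann} and using that $F\in\Omega^{2\Lambda}$ is free of $\psi_g,\bpsi_g$, the observation above implies
\begin{equation*}
    \psi_g\bpsi_g\,\exp\!\left(\inpr{\bm{\psi}}{-\Delta^g\bm{\bpsi}}_{\Lambda^g}+\inpr{\delta_g}{\bm\psi\bm\bpsi}_{\Lambda^g}\right)F
    = \psi_g\bpsi_g\,\exp\!\left(\inpr{\bm{\psi}}{-\Delta_\Lambda\bm{\bpsi}}\right)F.
\end{equation*}

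Finally, I would factor the product of Grassmannian derivatives. Since $\partial_{\psi_g}$ and $\partial_{\bpsi_g}$ each anticommute with every $\partial_{\psi_v}$ and $\partial_{\bpsi_v}$ for $v\in\Lambda$, the pair $\partial_{\bpsi_g}\partial_{\psi_g}$ traverses $\prod_{v\in\Lambda}\partial_{\bpsi_v}\partial_{\psi_v}$ with an even number of sign flips; thus
\begin{equation*}
    \prod_{v\in\Lambda^g}\partial_{\bpsi_v}\partial_{\psi_v}
    = \left(\prod_{v\in\Lambda}\partial_{\bpsi_v}\partial_{\psi_v}\right)\partial_{\bpsi_g}\partial_{\psi_g}.
\end{equation*}
Applying $\partial_{\bpsi_g}\partial_{\psi_g}$ to $\psi_g\bpsi_g\cdot H$ for any $H\in\Omega^{2\Lambda}$ returns $H$ from the very definition of Grassmannian differentiation, and putting everything together yields exactly $\efd{F}_\Lambda$, which is~\eqref{eq:equiv-fggs}.

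I do not expect a serious obstacle: the argument is essentially a nilpotency-plus-factorization computation. The only point requiring some care is the sign bookkeeping when commuting Grassmannian derivatives, but because all manipulations involve the paired objects $\psi_g\bpsi_g$ and $\partial_{\bpsi_g}\partial_{\psi_g}$, every move produces an even number of anticommutations and no stray signs appear.
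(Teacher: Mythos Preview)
Your proposal is correct and follows essentially the same approach as the paper: split the exponent into the $\Lambda$-part plus ghost-dependent remainder, use nilpotency of $\psi_g,\bpsi_g$ so that the prefactor $\psi_g\bpsi_g$ kills all ghost-dependent contributions from the exponential, and then integrate out the ghost variables. If anything, your write-up is slightly more explicit about the sign bookkeeping when commuting $\partial_{\bpsi_g}\partial_{\psi_g}$ past the $\Lambda$-derivatives, a step the paper performs in one line.
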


\begin{proof}
For clarity we will distinguish between the inner product $\inpr{\cdot}{\cdot}_\Lambda$ and $\inpr{\cdot}{\cdot}_{\Lambda^g}$ with a subscript. We start by rewriting the exponent $\inpr{ \psi}{ -\Delta^g \bpsi }_{\Lambda^g}$ and isolating the terms with either $\psi_g$ or $\bpsi_g$. 
\begin{align*}
\inpr{ \psi}{ -\Delta^g \bpsi }_{\Lambda^g}
=
	\inpr{\psi}{ -\Delta_\Lambda \bpsi}_{\Lambda}
-
	\sum_{u \in \Lambda} \Delta^g(u,\,g) \left(\psi_g \bpsi_u + \psi_u \bpsi_g\right)
+   \underbrace{\left(1+\sum_{u \in \Lambda} \Delta^g(u,\,g)\right)}_{\eqqcolon c_g} \psi_g \bpsi_g.
\end{align*}
Notice that each of the elements of the sum above commutes with all other elements.
We can then compute $\exp\left(\inpr{\psi}{ -\Delta^g \bpsi}_{\Lambda^g}\right) $ with Definition~\ref{def:func-of-grasmmann} as %

\begin{multline*}
\e^{ \inpr{ \psi}{ -\Delta^g \bpsi }_{\Lambda^g}}
 = 
\sum_{k =0}^\infty
	\frac{1}{k!}
	\left(	
	\inpr{\psi}{ -\Delta_\Lambda \bpsi}_{\Lambda}
	 -
	 	\sum_{u \in \Lambda} \Delta^g(u,\,g) \left(\psi_g \bpsi_u + \psi_u \bpsi_g\right)
	+ c_g \psi_g \bpsi_g\right)^k
\\  = 
	\sum_{k =0}^\infty
	\frac{1}{k!} \sum_{\substack{k_1,\,k_2,\,k_3 \ge 0 \\ k_1 + k_2 + k_3 = k} } \frac{k!}{k_1!k_2!k_3!} \Big(\inpr{\psi}{ -\Delta_\Lambda \bpsi}_{\Lambda}\Big)^{k_1} \left(	-\sum_{u \in \Lambda} \Delta^g(u,\,g) \left(\psi_g \bpsi_u + \psi_u \bpsi_g\right) \right)^{k_2} 
	\left(c_g \psi_g \bpsi_g\right)^{k_3}.
\end{multline*}
Multiplying this sum by $\psi_g \bpsi_g$ and using \eqref{eq:squares-are-zero}, we obtain
\begin{equation*}
	\psi_g \bpsi_g	
	\e^{ \inpr{ \psi}{ -\Delta^g \bpsi }_{\Lambda^g}}
 = 
	\psi_g \bpsi_g	
	\sum_{k =0}^\infty
	\frac{1}{k!}
		\left(\inpr{\psi}{ -\Delta_\Lambda \bpsi}_{\Lambda}\right)^k
 = 
	\psi_g \bpsi_g	
	\e^{ \inpr{ \psi}{-\Delta_\Lambda \bpsi }_{\Lambda} } .
\end{equation*}
Now, provided that $F \in \Omega^{2\Lambda}$, and in particular that $\psi_g,\,\bpsi_g$ do not appear in $F$, we have that 
\begin{equation*}
	\efp{F}_{\Lambda}
	= 
	\left(\prod_{v\in\Lambda^g}\partial_{\bpsi_v}\partial_{\psi_v}\right)
	\psi_g \bpsi_g	
	\e^{ \inpr{ \psi}{ -\Delta_\Lambda \bpsi}_{\Lambda} } 
	F
    = 
	\left(\prod_{v\in\Lambda}\partial_{\bpsi_v}\partial_{\psi_v}\right)
	\e^{ \inpr{ \psi}{ -\Delta_\Lambda \bpsi }_{\Lambda} }
	F
	= \efd{F}_{\Lambda}.\qedhere
\end{equation*}
\end{proof}

The previous lemma yields a consequence which will be of great importance for our work.

\begin{corollary}\label{cor:equal_mom}
Let $V = \{v_1,\,\ldots,\,v_k\} \subseteq \Lambda^{\mathrm{in}}$ and $\eta(\cdot),\,\eta'(\cdot)$ be any two maps $V\to\{e_1,\,\ldots,\,e_{2d}\}$. Then
\[
\efp{\prod_{j=1}^k\nabla_{\eta(v_j)}\psi(v_j)\nabla_{\eta'(v_j)}\bpsi(v_j)}_\Lambda=\efd{\prod_{j=1}^k\nabla_{\eta(v_j)}\psi(v_j)\nabla_{\eta'(v_j)}\bpsi(v_j)}_\Lambda.
\]
Recalling that we denote the cumulants with respect to $\Efp{\cdot}_\Lambda$ as $\kappa^{\textbf{\textup p}}_\Lambda(\cdot)$ and similarly for $\kappa^{\textbf{\textup 0}}_\Lambda(\cdot)$, with the same notation above we have
\[
\kappa^{\textbf{\textup p}}_\Lambda\left(\nabla_{\eta(v_j)}\psi(v_j)\nabla_{\eta'(v_j)}\bpsi(v_j):\,j=1,\,\ldots,\,k\right) = \kappa^{\textbf{\textup 0}}_\Lambda\left(\nabla_{\eta(v_j)}\psi(v_j)\nabla_{\eta'(v_j)}\bpsi(v_j):\,j=1,\,\ldots,\,k\right).
\]
\end{corollary}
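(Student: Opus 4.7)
The plan is to reduce Corollary~\ref{cor:equal_mom} to a direct application of Lemma~\ref{lem:equiv-fggs}. The only thing to check is that the observable in question lives in $\Omega^{2\Lambda}$ rather than in the larger algebra $\Omega^{2\Lambda^g}$, so that the lemma applies. First I would note that $\Lambda^{\mathrm{in}} = \Lambda \setminus \partial^{\mathrm{in}}\Lambda$ consists exactly of those vertices all of whose nearest neighbors in $\mathbb{Z}^d$ lie in $\Lambda$. Thus for any $v_j \in \Lambda^{\mathrm{in}}$ and any $e \in \{e_1,\ldots,e_{2d}\}$, both $v_j$ and $v_j + e$ belong to $\Lambda$, and therefore the gradients $\nabla_e \psi(v_j) = \psi_{v_j+e} - \psi_{v_j}$ and $\nabla_e \bpsi(v_j)$ are elements of $\Omega^{2\Lambda}$. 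Taking a product of such gradients shows that $F \coloneqq \prod_{j=1}^k \nabla_{\eta(v_j)}\psi(v_j)\,\nabla_{\eta'(v_j)}\bpsi(v_j) \in \Omega^{2\Lambda}$.

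With $F \in \Omega^{2\Lambda}$ established, Lemma~\ref{lem:equiv-fggs} gives directly $\efp{F}_\Lambda = \efd{F}_\Lambda$, which is the first identity in the statement. For the normalized versions (needed when passing to cumulants), one uses that $\zfp_\Lambda = \zfd_\Lambda = \det(-\Delta_\Lambda)$ as recorded right after the definition of the fGFF states; dividing both sides by this common constant yields $\Efp{F}_\Lambda = \Efd{F}_\Lambda$ as well.

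For the cumulant equality, the plan is to apply the above argument block by block in the partition formula. Fix any partition $\pi \in \Pi(V)$ and any block $B \in \pi$. The restricted product $\prod_{v_j \in B} \nabla_{\eta(v_j)}\psi(v_j)\,\nabla_{\eta'(v_j)}\bpsi(v_j)$ is again a product of gradients indexed at points of $\Lambda^{\mathrm{in}}$, hence again an element of $\Omega^{2\Lambda}$. By the previous paragraph its pinned and Dirichlet expectations coincide. Since Definition~\ref{def:cum_mult_Gras} writes $\kappa^{\bullet}_\Lambda$ as a fixed polynomial in the block expectations $\langle \prod_{v \in B} W_v\rangle^{\bullet}_\Lambda$ — and this polynomial is the same for $\bullet = \mathbf{p}$ and $\bullet = \mathbf{0}$ — the equality of every block expectation propagates to the equality of the two cumulants.

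There is no substantive obstacle here; the statement is effectively a bookkeeping corollary of Lemma~\ref{lem:equiv-fggs}. The only subtlety worth isolating is the use of the hypothesis $V \subseteq \Lambda^{\mathrm{in}}$ (rather than merely $V \subseteq \Lambda$), which is precisely what guarantees that all gradient terms avoid the ghost generators $\psi_g, \bpsi_g$ and keep $F$ inside $\Omega^{2\Lambda}$.
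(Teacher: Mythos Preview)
Your proposal is correct and matches the paper's own argument essentially verbatim: the paper simply notes that the corollary follows from Lemma~\ref{lem:equiv-fggs} together with the observation that $V \subseteq \Lambda^{\mathrm{in}}$ forces $\nabla_{e_i}\psi(v),\,\nabla_{e_i}\bpsi(v) \in \Omega^{2\Lambda}$. Your additional remarks on normalization and on propagating the equality through the cumulant partition formula are sound and make explicit what the paper leaves implicit.
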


\noindent
The proof is a simple consequence of the previous lemma and the fact that, if $V \subseteq \Lambda^{\mathrm{in}}$, then for all $v \in V$ we have that $\nabla_{e_i} \psi(v),\, \nabla_{e_i} \bpsi(v) \in \Omega^{2\Lambda}$.

\subsection{Proof of Theorem \ref{thm:height-fgff}}

The proof of Theorem \ref{thm:height-fgff} will consist of three steps.
In the first step (Proposition~\ref{thm:swan_UST_fGFF}), we will relate the probability of having a certain set of edges in a uniform spanning tree with the fGFF state evaluated on fermionic Grassmannian observables. The statement can be found in \citet[Corollary B.3]{BCHS} and we will give its full proof to stress the normalization factor.

Then in the second step we can demonstrate that these states can be written as a determinant of a matrix containing double gradients of the Green's function (Lemma \ref{lem:joint-moments-grad}) and finally, to obtain \eqref{eq:th1} we note that the height-one field can be expressed as the average of certain fermionic observables in the state $\Efd{\cdot}_\Lambda$.

For a set of oriented edges $S$, we abbreviate $\zeta_S$ as
\[
    \zeta_S \coloneqq \prod_{f \in S} \big(\psi_{f^+}-\psi_{f^-}\big)\big(\bpsi_{f^{+}}-\bpsi_{f^-}\big) = \prod_{f\in S}\nabla_{f}\psi(f^-)\nabla_{f}\bpsi(f^-),
\]
where $\nabla_{f}$ is an abuse of notation for $\nabla_{\eta^*(f)}$, being $\eta^*$ as in~\eqref{eq:M}. Note that $\zeta_{f}=\zeta_{-f}$ for any oriented edge $f$. Therefore, we can consider $\zeta$ as defined on {\it unoriented} edges altogether. For the same reason we can also write $\nabla_{f}\psi(f^-)\nabla_{f}\bpsi(f^-)$ without pinning down the exact orientation of $f$.
\begin{proposition}\label{thm:swan_UST_fGFF}
Let $\mathcal G=(\Lambda,\, E)$ be a finite graph. For all subsets of edges $S\subseteq E$
\begin{equation}\label{eq:UST-grad-ferm}
    {\bf P}(T:\, S\subseteq T) = \Efp{\zeta_S}_{\Lambda}.
\end{equation}
\end{proposition}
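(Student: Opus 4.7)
The plan is to show that both sides of~\eqref{eq:UST-grad-ferm} equal the determinant of the transfer current matrix $Y(f,f') \coloneqq \nabla_f^{(1)} \nabla_{f'}^{(2)} G_\Lambda(f^-,f'^-)$, $f,f' \in S$, using the convention $G_\Lambda(\cdot,g)=G_\Lambda(g,\cdot)=0$ to accommodate edges incident to the ghost. On the probabilistic side this is precisely the transfer current theorem of Burton--Pemantle (see e.g.~\cite[Chapter 4]{lyonsperes}), which gives
\[
\mathbf{P}(S \subseteq T) = \det\bigl(Y(f,f')\bigr)_{f,f' \in S}.
\]
The bulk of the work is therefore to evaluate $\Efp{\zeta_S}_\Lambda$ as the same determinant.

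For the fermionic computation the natural move is to recognise $\zeta_S$ as a product of linear combinations of generators and apply Wick's theorem (Theorem~\ref{thm:sportiello}, item~\ref{thm_Wick_two}). Introduce the oriented incidence matrix $C$, indexed by $\Lambda^g\times S$, with entries $C_{v,f} = \delta_{v,f^+} - \delta_{v,f^-}$, and observe that $\zeta_S = \prod_{f\in S} (\bm{\psi}^T C)_f\,(C^T \bm{\bpsi})_f$. Assuming first that no edge of $S$ touches the ghost, $\zeta_S \in \Omega^{2\Lambda}$ and Lemma~\ref{lem:equiv-fggs} gives $\efp{\zeta_S}_\Lambda = \efd{\zeta_S}_\Lambda$. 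Applying Theorem~\ref{thm:sportiello}, item~\ref{thm_Wick_two}, with $A = -\Delta_\Lambda$, $B = C^T$ and the same $C$, then yields
\[
\efp{\zeta_S}_\Lambda = \det(-\Delta_\Lambda)\,\det\bigl(C^T (-\Delta_\Lambda)^{-1} C\bigr)_{S,S},
\]
and a direct unfolding of the matrix product $C^T G_\Lambda C$ identifies its entries with $Y(f,f')$. Normalising by $\zfp_\Lambda = \det(-\Delta_\Lambda)$ yields $\Efp{\zeta_S}_\Lambda = \det(Y(f,f'))_{f,f'\in S}$, matching the Burton--Pemantle determinant.

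To handle edges of $S$ that do touch the ghost, I would expand the prefactor $\psi_g\bpsi_g$ inside the definition of $\efp{\cdot}_\Lambda$ together with Pauli exclusion ($\psi_g^2 = \bpsi_g^2 = 0$); for any $f$ with $f^+ = g$ one checks
\[
\psi_g \bpsi_g\,(\psi_g - \psi_{f^-})(\bpsi_g - \bpsi_{f^-}) = \psi_g \bpsi_g\,\psi_{f^-}\bpsi_{f^-}.
\]
In other words, the pinning effectively enforces $\psi_g=\bpsi_g=0$ inside $\zeta_S$, which is exactly the convention under which $Y(f,f')$ remains well defined on boundary edges. The modified observable then lies in $\Omega^{2\Lambda}$ and the preceding Wick-based argument runs verbatim.

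The main obstacle is precisely this bookkeeping at the boundary: verifying that the $\psi_g\bpsi_g$ pinning in $\efp{\cdot}_\Lambda$ is what converts the Dirichlet Green's function into the \emph{wired} Green's function seen by the UST on $\Lambda^g$, so that the fermionic determinant and the Burton--Pemantle determinant genuinely coincide. Once this identification is in place, the remainder is a routine application of Wick's theorem.
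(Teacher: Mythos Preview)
Your proposal is correct and follows essentially the same route as the paper: compute the fermionic side by packaging $\zeta_S$ as $\prod_f (\bm\psi^T C)_f (C^T\bm\bpsi)_f$ and applying Wick's theorem (Theorem~\ref{thm:sportiello}, item~\ref{thm_Wick_two}) with $A=-\Delta_\Lambda$, then match the resulting $\det(M)_S$ with the transfer-current/Matrix-Tree identity for $\mathbf P(S\subseteq T)$. The paper isolates the fermionic computation as a separate lemma (Lemma~\ref{lem:joint-moments-grad}) stated only for edges with both endpoints in $\Lambda^{\mathrm{in}}$, and then invokes \cite[Equation~(4.5)]{lyonsperes}; your explicit treatment of edges touching the ghost via the pinning identity $\psi_g\bpsi_g(\psi_g-\psi_{f^-})(\bpsi_g-\bpsi_{f^-})=\psi_g\bpsi_g\,\psi_{f^-}\bpsi_{f^-}$ is a detail the paper does not spell out (it is not needed for the later applications, which only involve good sets in the interior).
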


\noindent
The following lemma will be useful to prove \eqref{eq:UST-grad-ferm}.

\begin{lemma}\label{lem:joint-moments-grad}
Let $S=\{f_1,\,\ldots,\,f_k\}\subseteq E$ be edges such that all their endpoints belong to $\Lambda^{\mathrm{in}}$. 
Then
\begin{equation}\label{eq:moments-into-dets} 
	\efp{\prod_{j=1}^k\nabla_{f_j}\psi(f^{-}_j)\nabla_{f_j}\bpsi(f^{-}_j)}_\Lambda = 
	\efp{ \zeta_S }_\Lambda
	=
	\det (-\Delta_\Lambda)
	\det (M)_S.
\end{equation}
A simple consequence of \eqref{eq:moments-into-dets} is
\begin{equation*}
	\Efp{\zeta_S}_\Lambda
	=
	\det (M)_S. 
\end{equation*}
\end{lemma}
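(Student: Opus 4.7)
The plan is to compute the unnormalized fermionic state directly by applying the fermionic Wick theorem (Theorem~\ref{thm:sportiello}, item~\ref{thm_Wick_two}). The first equality in \eqref{eq:moments-into-dets} is just the definition of $\zeta_S$, so the content of the lemma is the second equality. Since every endpoint of every edge in $S$ lies in $\Lambda^{\mathrm{in}}$, each factor $\nabla_{f_j}\psi(f_j^-)\nabla_{f_j}\bpsi(f_j^-)$ belongs to $\Omega^{2\Lambda}$, so Lemma~\ref{lem:equiv-fggs} allows me to replace $\efp{\zeta_S}_\Lambda$ by $\efd{\zeta_S}_\Lambda$ and work with the simpler Dirichlet state.

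The key algebraic step is to rewrite the discrete gradients as linear forms in the generators. Define the $k\times|\Lambda|$ incidence-like matrix $B$ by $B_{j,v}\coloneqq \1_{\{v=f_j^+\}}-\1_{\{v=f_j^-\}}$, and set $C\coloneqq B^{\intercal}$. Then $\nabla_{f_j}\bpsi(f_j^-)=(B\bm\bpsi)_j$ and $\nabla_{f_j}\psi(f_j^-)=(\bm\psi^{\intercal}C)_j$, so the integrand is exactly of the form required by Theorem~\ref{thm:sportiello}, item~\ref{thm_Wick_two}, with quadratic form matrix $A\coloneqq -\Delta_\Lambda$. Since $-\Delta_\Lambda G_\Lambda=I$ on $\Lambda$, one has $A^{-1}=G_\Lambda$, and the theorem yields
\begin{equation*}
\efd{\zeta_S}_\Lambda \;=\; \det(-\Delta_\Lambda)\,\det\!\bigl(B\,G_\Lambda\,B^{\intercal}\bigr).
\end{equation*}

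A direct computation shows that the $(j,j')$-entry of $B\,G_\Lambda\,B^{\intercal}$ is
\begin{equation*}
G_\Lambda(f_j^+,f_{j'}^+) - G_\Lambda(f_j^+,f_{j'}^-) - G_\Lambda(f_j^-,f_{j'}^+) + G_\Lambda(f_j^-,f_{j'}^-),
\end{equation*}
which is exactly $\nabla^{(1)}_{f_j}\nabla^{(2)}_{f_{j'}}G_\Lambda(f_j^-,f_{j'}^-)=M(f_j,f_{j'})$ by the definition \eqref{eq:M}. Hence $\det(B\,G_\Lambda\,B^{\intercal})=\det(M)_S$, which gives the claimed identity. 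The normalized consequence then follows upon dividing by $\mathcal Z^{\mathbf p}_\Lambda=\det(-\Delta_\Lambda)$.

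There is no serious obstacle: the only points requiring care are (i) confirming that the signs and orientations in the definition of $M$ are consistent with the identification $B\,G_\Lambda\,B^{\intercal}=(M(f_j,f_{j'}))_{j,j'}$ (which is symmetric under reversing the orientation of any $f_j$, in line with the remark that $\zeta_f=\zeta_{-f}$), and (ii) verifying that $A^{-1}=G_\Lambda$ rather than $-G_\Lambda$, which follows immediately from the convention $-\Delta_\Lambda G_\Lambda=I$. Everything else is a mechanical application of Theorem~\ref{thm:sportiello} combined with Lemma~\ref{lem:equiv-fggs}.
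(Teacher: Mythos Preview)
Your proposal is correct and follows essentially the same route as the paper: pass from the pinned to the Dirichlet state via Lemma~\ref{lem:equiv-fggs} (the paper invokes its Corollary~\ref{cor:equal_mom}, which is the same reduction), encode the discrete gradients via an incidence matrix, apply item~\ref{thm_Wick_two} of Theorem~\ref{thm:sportiello} with $A=-\Delta_\Lambda$ and $A^{-1}=G_\Lambda$, and identify $B\,G_\Lambda\,B^{\intercal}$ entrywise with $M$. The only cosmetic difference is that the paper also remarks that $\zeta_S$ commutes with the exponential before applying Wick's theorem, but this is implicit in your use of Theorem~\ref{thm:sportiello}.
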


\begin{proof}[Proof of Lemma~\ref{lem:joint-moments-grad}]
Firstly, notice that, due to Corollary~\ref{cor:equal_mom}, we can substitute $\efp{\cdot}_{\Lambda}$ by $\efd{\cdot}_{\Lambda}$ in the left-hand side of \eqref{eq:moments-into-dets}.
It is straightforward to show that $\zeta_S$ is commuting, so that in fact
\begin{align}
\efd{\zeta_S}_\Lambda &=
\left(\prod_{i=1}^n\partial_{\bpsi_i}\partial_{\psi_i}\right) 
\exp\left(\inpr{\bm{\psi}}{-\Delta_\Lambda\bm{\bpsi}}\right)
 \zeta_S  \nonumber\\
&=\left(\prod_{i=1}^n\partial_{\bpsi_i}\partial_{\psi_i}\right) 
\left(\prod_{i=1}^k\nabla_{f_i}\psi(f_i^{-})\nabla_{f_i}\bpsi(f_i^{-})\right)
\exp\left(\inpr{\bm{\psi}}{-\Delta_\Lambda\bm{\bpsi}}\right).\label{eq:aux_one}
\end{align}
Next, observe that
\begin{equation*}	
	\left(\nabla_{f_i}\psi(f^{-}_i):\,i=1,\,\ldots,\,k\right) =
	{\bm\psi}^T\widetilde{C},
	\quad
	\left(\nabla_{f_i}\bpsi(f^{-}_i):\,i=1,\,\ldots,\,k\right) = \widetilde{B}{\bm\bpsi},
\end{equation*}
where $\widetilde B=\widetilde C^T$ and $\widetilde C$ is a $ |\Lambda|\times k$ matrix such that the column corresponding to the $i$-th point is given by 
\[ 
\widetilde C(\cdot,\,i) = (0,\,\dots,\,0,\,-1,\,0,\,\dots,\,0,\,1,\,0,\,\dots,\,0)^T , 
\]
with the $-1$ (resp. $1$) located at the $f^{-}_i$-th position (resp. that of point $f^{+}_{i}$).
Therefore,
\begin{align}   \label{eq:aux_two}
	\eqref{eq:aux_one}=
	\left(\prod_{i=1}^n\partial_{\bpsi_i}\partial_{\psi_i}\right) 
	\left(\prod_{j=1}^k\left({\bm \psi^T \widetilde C}\right)_j \left(\widetilde B\bm\bpsi\right)_j\right)
	\exp\left(\inpr{\bm{\psi}}{-\Delta_\Lambda\bm{\bpsi}}\right).
\end{align}
The matrix $-\Delta_\Lambda$ has inverse given by the Green's function $G_\Lambda(\cdot,\cdot)$. 
The lemma now follows from item \ref{thm_Wick_two} of Theorem~\ref{thm:sportiello} and the simple computation
\[
\left(\widetilde B(G_\Lambda) \widetilde C \right) (f,\,g) = M(f,\,g)
\]
for $f,\,g \in S$.
\qedhere
\end{proof}

\begin{proof}[Proof of Proposition~\ref{thm:swan_UST_fGFF}]
	The proof follows from the previous Lemma \ref{lem:joint-moments-grad} and the Matrix-Tree Theorem:
	\begin{equation*}
    {\bf P}(T:\, S\subseteq T) =
		\det (M)_S,
	\end{equation*}
	see \citet[Equation (4.5)]{lyonsperes}. 
\end{proof}
Once we have established the previous connections between uniform spanning trees and our fermionic variables, we turn our attention to a connection between the height-one field and the uniform spanning tree.
\begin{lemma}\label{lem:height1}
Let $\mathcal{G}=(\Lambda,\, E)$ be a graph and $V\subseteq \Lambda^{\mathrm{in}}$ a good set (see Definition~\ref{def:goodset}).
Let $\eta:\, V  \to E(V)$ be a function such that $\eta(v)$ assigns to every $v\in V$ an edge incident to $v$. Let $\eta(V)$ be its image. 
Then the height-one field of the Abelian sandpile model satisfies
\begin{equation}\label{eq:height1}
\mathbb{E} \left ( \prod_{v\in V} h_{\Lambda}(v)\right ) = {\bf P}\left( e \notin T \ \forall \, e\in E(V) \setminus \eta(V)\right).
\end{equation}
\end{lemma}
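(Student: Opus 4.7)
My plan is to invoke the burning bijection of~\cite{DharMaj}, with a tie-breaking rule tailored to $\eta$, in order to translate the ASM height-one event into an edge-avoidance event for the UST.

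\textbf{Reformulation.} Since the stationary measure of the ASM on $\mathcal R_\Lambda$ is uniform and ${\bf P}$ is the uniform measure on $\mathbb T$, with $|\mathcal R_\Lambda| = |\mathbb T|$ by the burning bijection (see~\cite{DharMaj,redig,jarai}), the first step is to reduce the lemma to exhibiting a bijection
\begin{equation*}
\Phi :\; \{\rho \in \mathcal R_\Lambda:\, \rho(v) = 1 \; \forall v \in V\} \longleftrightarrow \{T \in \mathbb T:\, T \cap (E(V)\setminus\eta(V)) = \emptyset\}.
\end{equation*}

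\textbf{A bijection adapted to $\eta$.} Next, I would fix, for every vertex $v \in \Lambda$, a total order on the $2d$ edges in $E_v$ subject to the requirement that $\eta(v)$ be the maximum whenever $v \in V$. Because $V$ is a good set, distinct vertices of $V$ are non-adjacent, so the prescriptions at different $v\in V$ do not interfere and such an order exists. Associated with this ordering, the burning bijection defines $\Phi:\mathcal R_\Lambda \to \mathbb T$ by declaring, for each non-ghost vertex $v$, that the parent edge of $v$ in $\Phi(\rho)$ is the maximal edge in $E_v$ whose other endpoint has been burnt by the time $v$ itself burns.

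\textbf{Matching the events.} I would then argue as follows. If $\rho(v) = 1$, the burning rule forces every neighbor of $v$ to burn strictly before $v$, so $v$ is a leaf in $\Phi(\rho)$; the tie-breaking rule then selects $\eta(v)$ as the parent edge of $v$, so $T \cap E_v = \{\eta(v)\}$. Conversely, the inverse burning applied to any tree $T$ with $T \cap E_v = \{\eta(v)\}$ for every $v \in V$ produces a recurrent configuration with $\rho(v) = 1$ on $V$; the goodness of $V$ (Definition~\ref{def:goodset}) guarantees that the inverse burning can reach every vertex via paths bypassing $V$. Finally, for distinct $v, w \in V$ one has $\eta(w) \in E_w$ and $E_v \cap E_w = \emptyset$ because edges in $E_v$ all have tail $v$; hence $\eta(V) \cap E_v = \{\eta(v)\}$, and the condition $T \cap E_v = \{\eta(v)\}$ for every $v \in V$ is equivalent to $T \cap (E(V)\setminus\eta(V)) = \emptyset$. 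This yields~\eqref{eq:height1}.

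\textbf{Main difficulty.} The hard part will be verifying rigorously that the modified burning bijection is well-defined and that the height-one condition at $V$ corresponds precisely to the leaf-with-prescribed-parent property. Goodness of $V$ is what makes both the ordering compatible across vertices and the inverse burning propagate correctly; this is consistent with Lemma~\ref{lem:char_heightone}, which already shows that the expectation vanishes otherwise.
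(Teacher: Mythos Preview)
Your approach is essentially the same as the paper's: both rest on the burning bijection to identify the height-one event on $V$ with a leaf-with-prescribed-parent event in the UST, and then rewrite the latter as the edge-avoidance event in~\eqref{eq:height1}. The paper's proof is slightly more economical in that, rather than building an $\eta$-adapted tie-breaking rule by hand, it invokes \cite[Lemma~4]{jarwer} to guarantee a burning sequence that exhausts $\Lambda\setminus V$ before touching $V$ (this is exactly what your ``goodness guarantees the inverse burning propagates'' step is doing), and then cites \cite[Theorem~1]{jarwer} for the $\eta$-independence; your version makes these steps explicit at the cost of the technical verification you flag in your ``Main difficulty'' paragraph. Both routes are correct, and your final equivalence $\{T\cap E_v=\{\eta(v)\}\ \forall v\}=\{T\cap(E(V)\setminus\eta(V))=\emptyset\}$ matches the paper's closing line (note that the inclusion $\eta(v)\in T$ on the left comes for free because $T$ spans $v$, so $T\cap E_v\neq\emptyset$).
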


\begin{proof}
Recall that the burning bijection maps every recurrent configuration of the sandpile model to a spanning tree of the graph. The height-one configuration on the good set $V$ can be considered as a special case of a minimal sandpile configuration (see \citet[Definition 1]{jarwer}).
By \citet[Lemma 4]{jarwer} there is a burning sequence in the burning algorithm that burns all vertices in $\Lambda \setminus V$ before burning any vertex in $V$. We can use this burning sequence to understand what the tree associated to a configuration $\rho$ on $\Lambda$ such that $\rho(v)=1$, $v\in V$, looks like.

Denote by $\mathcal{G}_V$ the subgraph constructed from $\mathcal{G}$ by wiring all edges incident to $V$ into a ghost site (usually called ``sink'' in the ASM language) and removing all loops.
Fix a spanning tree $t_0$ of $\mathcal{G}_V$.
Since $V$ is a good set and thus its vertices have distance at least two between each other, we know that the subgraph $\mathcal{G}_V$ consists of $|V|$ vertices, each connected to the sink by $2d$ edges.
We know that $t_0$ must contain one of those edges for each vertex. For each such $t_0$, we define the map $\eta:\,V \to E(V)$ such that $\eta(v)$ is the only edge in $E(V)\cap t_0$ which is incident to $v$.

Hence, denoting by $T_V$ the edges of $T$ with one endpoint in $V$, we obtain
\[
\mathbb{P}(\rho:\, \rho(v)=1 , \ \forall \, v\in V) = {\bf P}(T: T_V=\eta(V)).
\]
Since this probability does not depend on the choice of $t_0$ we can write
\[
\mathbb{P}(\rho:\, \rho(v)=1, \ \forall \,v\in V) = {\bf P}(T:\, T_V = \eta(V)) = {\bf P}\big(T: \, T \cap (E(V)  \setminus \eta(V))=\emptyset\big),
\]
which yields the result by the Matrix-Tree Theorem.
The fact that the above probability does not depend on $\eta$ follows from \citet[Theorem 1]{jarwer}.
\end{proof}

We will combine the two previous observations in the sequel and prove relation \eqref{eq:th1}.

\begin{proof}[Proof of Theorem~\ref{thm:height-fgff}]
Let us first prove the theorem for $V=\{o\}$, the origin (the choice of the origin is made only to avoid heavy notation as we will see), and then the general case $|V|>1$. Consider the function $\eta:\, \{o\} \to E_o$ such that $\eta(o)= o+e_1$. 
By Lemma \ref{lem:height1} we have that
\[
\mathbb{P}(h_{\Lambda}(o)=1) = {\bf P}(e_2\notin T,\,\dots,\,e_{2d} \notin T) ={\bf P}(e_2\notin T,\,\dots,\,e_{2d} \notin T,\, e_1\in T). 
\]
By the inclusion-exclusion principle we can write
\begin{multline}
\mathbb{P}(h_{\Lambda}(o)=1) = {\bf P} (e_1 \in T) - \sum_{i=2}^{2d} {\bf P}(e_i \in T,\, e_1 \in T) +\\ \sum_{2\le i_1\neq i_2\le 2d} {\bf P}(e_{i_1} \in T, e_{i_2} \in T,\, e_1 \in T) - \cdots - {\bf P}(e_1 \in T,\,\dots,\, e_{2d} \in T).
\end{multline}
We rewrite each summand above in terms of fermionic variables by using Proposition \ref{thm:swan_UST_fGFF}. Slightly abusing notation, we call $e_i$ also the point $o+e_i$. Therefore, we obtain
\begin{multline}
\mathbb{P}(h_{\Lambda}(o)=1) = \Efp{(\psi_{o}-\psi_{e_1})(\bpsi_{o}-\bpsi_{e_1})}_\Lambda - \sum_{i=2}^{2d} \Efp{(\psi_{o}-\psi_{e_1})(\bpsi_{o}-\bpsi_{e_1})(\psi_{o}-\psi_{e_i})(\bpsi_{o}-\bpsi_{e_i})}_\Lambda \\
+ \cdots - \Efp{\prod_{i=1}^{2d}(\psi_{o}-\psi_{e_i})(\bpsi_{o}-\bpsi_{e_i})}_\Lambda.\label{eq:exp_oneminus}
\end{multline}
Now we use the formula
\[
\prod_{i=1}^n\left(1-a_i\right)=\sum_{A\subseteq [n]}(-1)^{|A|}\prod_{j\in A}a_j,\quad a_i\in\R,\,n\in\N
\]
in the right-hand side of~\eqref{eq:exp_oneminus} to conclude that
\begin{align*}
    \mathbb{P}(h_{\Lambda}(o)=1)
    &= \Efp{(\psi_{o}-\psi_{e_1})(\bpsi_{o}-\bpsi_{e_1})\prod_{i=2}^{2d}\left[1-(\psi_{o}-\psi_{e_i})(\bpsi_{o}-\bpsi_{e_i})\right]}_\Lambda\\
    &=\Efp{(\psi_{o}-\psi_{e_1})(\bpsi_{o}-\bpsi_{e_1})\prod_{i=1}^{2d}\left[1-(\psi_{o}-\psi_{e_i})(\bpsi_{o}-\bpsi_{e_i})\right]}_\Lambda.
\end{align*}
 In the last equality, we are using that $\left[(\psi_{u}-\psi_{v})(\bpsi_{u}-\bpsi_{w}) \right ]^2=0$ by the anticommutation property \eqref{eq:anticommute} for all $u,\,v$.

Recall that, by Lemma \ref{lem:height1}, the above probability does not depend on the choice of $\eta(\cdot)$.
Hence, summing over all possible $2d$ functions $\eta:\, \{o\} \to E_o$,  we obtain

\[
    \mathbb{P}(h_{\Lambda}(o)=1) 
	=
	\Efp{\frac{1}{2d}
	\sum_{i=1}^{2d}\left(\psi_{o}-\psi_{e_i}\right)\left(\bpsi_{o}-\bpsi_{e_i}\right) 
	\prod_{j=1}^{2d}\left[1- (\psi_{o}-\psi_{e_j})(\bpsi_{o}-\bpsi_{e_j})\right] }_{\Lambda},
\]
which yields the claim for $V=\{o\}$. With the appropriate change of notation the proof yields the same result for any $o\neq v\in V$.

Let us pass to the general case $|V|>1$. 
The event that, for all $v\in V$,
the edges in $E_v  \setminus \{\eta(v)\}$ are not in the spanning tree $T$ of $G$ and the edge $\{\eta(v)\}$ is in $T$ can be decomposed as
\begin{align*}
  & \bigcap_{v\in V}\left( \left\{\eta(v)\in T \right\}\cap\left(\bigcup_{e\in E_v\setminus\{\eta(v)\}}\{e\in T\}\right)^c\right)=
   \bigcap_{v\in V}\left\{\eta(v)\in T \right\}\cap\left(\bigcup_{e\in E(V)\setminus\{\eta(V)\}}\{e\in T\}\right)^c.
\end{align*}
We obtain, by the above and the same inclusion--exclusion principle in the second equality,

\begin{multline}
{\bf P} \left(\bigcap_{v\in V}\left( \left\{\eta(v)\in T \right\}\cap\left(\bigcup_{e\in E_v\setminus\{\eta(v)\}}\{e\in T\}\right)^c\right)\right) \\
\begin{aligned}
&= {\bf P} \left( \bigcap_{v\in V}\{\eta(v)\in T \}\right) -  {\bf P} \left( \bigcap_{v\in V}\left\{\eta(v) \in T\right\}\cap\bigcup_{e\in E(V)\setminus\{\eta(V)\}}\,\{e\in T\}\right) \\
&= \sum_{S \subseteq E(V) \backslash \eta(V)} (-1)^{|S|}{\bf P} \left(\bigcap_{v\in V}\{\eta(v) \in T\}\cap (S \subseteq T) \right),\label{eq:fermio_height}
\end{aligned}
\end{multline}
where again we sum over the probabilities that the edges of $\eta(V)$ are in the spanning tree $T$ as well as those in $S\subseteq E(V) \backslash \eta(V)$. 
Again, by Proposition \ref{thm:swan_UST_fGFF}, Equation \eqref{eq:fermio_height} becomes

 \begin{align} \label{eq:expansion-XvYv} 
     &\sum_{S \subseteq E(V) \backslash \eta(V)} (-1)^{|S|} \Efp{\prod_{\{r,\,s\} \in \eta(V)}\left(\psi_{r}-\psi_s\right)\left(\bpsi_{r}-\bpsi_s\right)\prod_{\{u,\,w\}\in S}\left(\psi_u-\psi_w\right)\left(\bpsi_u-\bpsi_w\right)}_\Lambda.
\end{align}
We observe that the sets of edges $S$ such that $S \cap \eta(V) \neq \emptyset$ do not contribute  to~\eqref{eq:expansion-XvYv}, since again by the anticommutation relation $\big[\left(\psi_u-\psi_w\right)\left(\bpsi_u-\bpsi_w\right)\big]^2 = 0$ for all $u, \,w \in \Lambda$. Moving the sum into the bracket we obtain
\begin{align}
  \sum_{S \subseteq E(V) } & \Efp{\prod_{\{r,\,s\}\in \eta(V)}\left(\psi_{r}-\psi_s\right)\left(\bpsi_{r}-\bpsi_s\right)\prod_{\{u,\,w\}\in S}(-1)^{|S|}\left(\psi_u-\psi_w\right)\left(\bpsi_u-\bpsi_w\right)}_\Lambda\nonumber\\
  &=\Efp{\prod_{\{r,\,s\}\in \eta(V)}\left(\psi_{r}-\psi_s\right)\left(\bpsi_{r}-\bpsi_s\right)\sum_{S\subseteq E(V) }\prod_{\{u,\,w\}\in S}(-1)^{|S|}\left(\psi_u-\psi_w\right)\left(\bpsi_u-\bpsi_w\right)}_\Lambda\nonumber\\
  &=\Efp{\prod_{\{r,\,s\}\in \eta(V)}\left(\psi_{r}-\psi_s\right)\left(\bpsi_{r}-\bpsi_s\right)\prod_{\{u,\,w\}\in E(V) }\left(1-\left(\psi_u-\psi_w\right)\left(\bpsi_u-\bpsi_w\right)\right)}_\Lambda.\label{eq:same_eta}
\end{align}
Recall that, by Lemma \ref{lem:height1}, the probability above is not depending on the choice of $\eta$.
Therefore, there are $(2d)^{|V|}$ many functions $\eta(\cdot)$ giving the same expression~\eqref{eq:same_eta}. Normalizing the field by $(2d)^{-|V|}$ yields the result.
\end{proof}

\subsection{Proofs of Theorem \ref{thm:main_cum2} and Corollary~\ref{cor:degree_field}}

In the following proof, we will write $\kappa$ instead of $\kappa_\Lambda^{\bzero}$ for simplicity. 

We start with a preliminary lemma which is an immediate consequence of~\citet[Lemma 4]{cipriani2022properties}. This general lemma will be used for the proof of Theorem~\ref{thm:main_cum2} item~\ref{item2_cum2} and will also serve for proving Theorem~\ref{thm:main_cum3_cont}.

\begin{lemma}\label{lem:cip4}
For fixed $\eps>0$ let $V=\{v_\eps,\,v'_\eps\}\subseteq U_\eps$, $E\coloneqq E(U_\eps)$, $f\in E_{v_\eps},\,f'\in E_{v'_\eps}$ such that $\eta^*(f)$ and $\eta^*(f')$ do not depend on $\eps$. Then
    \begin{equation}\label{eq:contlimit_g}
   \lim_{\eps\to 0}\eps^{-d} M\left(f,\,f'\right)= \partial^{(1)}_{\eta^*(f)}\partial^{(2)}_{\eta^*(f')} g_U\left(v,\, v'\right) ,
\end{equation}
where $\eta^*$ is as in~\eqref{eq:M}.
In the right-hand side recall that $\partial_e$, with $e\in E$, denotes a directional derivative.  
\end{lemma}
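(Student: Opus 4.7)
The plan is to invoke (or reprove) the standard scaling of the Dirichlet Green's function and its discrete gradients on a rescaled domain. Writing out the definition of $M$ in~\eqref{eq:M}, we have
\[
M(f, f') = \nabla^{(1)}_{\eta^*(f)}\nabla^{(2)}_{\eta^*(f')} G_{U_\eps}(v_\eps, v'_\eps),
\]
so the task reduces to identifying the continuum limit of a double discrete gradient of $G_{U_\eps}$ after rescaling by $\eps^{-d}$. Since the lemma is stated as an immediate consequence of~\citet[Lemma 4]{cipriani2022properties}, the cleanest route is to quote that result directly, which asserts exactly this convergence; the remaining task is to match notation (our $\eta^*(f), \eta^*(f')\in\{e_1,\,\ldots,\,e_{2d}\}$ being constant in $\eps$ fits the hypotheses) and to note that the limiting right-hand side is unambiguously defined because $v\neq v'$ and $\dist(V,\partial U)>0$ place us in a regime where $g_U$ is smooth.

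If one wants to give a self-contained argument, the plan is as follows. First, recall the classical scaling of the discrete Green's function on a smooth domain: for $d\ge 3$, the uniform convergence $\eps^{d-2}\,G_{U_\eps}(\lfloor x/\eps\rfloor,\,\lfloor y/\eps\rfloor)\to g_U(x,y)$ holds on compact subsets of $\{(x,y)\in U\times U\,:\,x\neq y\}$; for $d=2$, the corresponding statement involves an additive logarithmic correction (the potential kernel) which is harmless since we are about to take two gradients and the constants/logs drop out. This scaling is proven in \cite[Sec.~1.5--1.6]{lawler2013intersections}.

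Next, I would write each discrete gradient as a finite difference:
\[
\nabla^{(1)}_{\eta^*(f)}\nabla^{(2)}_{\eta^*(f')}G_{U_\eps}(v_\eps, v'_\eps)
= \sum_{a,b\in\{0,1\}}(-1)^{a+b} G_{U_\eps}(v_\eps + a\,\eta^*(f),\,v'_\eps + b\,\eta^*(f')).
\]
Substituting the scaling from the previous step gives
\[
\eps^{-d}\nabla^{(1)}_{\eta^*(f)}\nabla^{(2)}_{\eta^*(f')}G_{U_\eps}(v_\eps, v'_\eps)
\approx \eps^{-2}\sum_{a,b}(-1)^{a+b} g_U\bigl(v + a\eps\,\eta^*(f),\,v' + b\eps\,\eta^*(f')\bigr),
\]
and the right-hand side converges to the mixed directional derivative $\partial^{(1)}_{\eta^*(f)}\partial^{(2)}_{\eta^*(f')} g_U(v,v')$ by Taylor's theorem, using that $g_U$ is $C^\infty$ away from the diagonal and the boundary (classical elliptic regularity for the Laplacian on a smooth domain).

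The main obstacle is controlling the error in the scaling of $G_{U_\eps}$ uniformly enough that it survives the two discrete gradients, which amplify oscillations by $\eps^{-2}$; equivalently, one needs the approximation of $g_U$ by $\eps^{d-2}G_{U_\eps}$ in a $C^2$ (not merely $C^0$) sense, on the fixed compact neighborhood of $(v,v')$ cut out by $\dist(V,\partial U)>0$ and $v\neq v'$. This is precisely what~\cite[Lemma 4]{cipriani2022properties} establishes, so invoking that lemma is the most efficient way to close the proof; in $d=2$ the use of the potential kernel from Definition~\ref{def:Green-function-discrete} ensures that the logarithmic divergence does not spoil the argument, since it is annihilated by a single gradient.
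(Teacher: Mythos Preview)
Your proposal is correct and matches the paper's approach exactly: the paper states this lemma as ``an immediate consequence of~\citet[Lemma 4]{cipriani2022properties}'' and gives no further argument, so your direct invocation of that result is precisely what is intended. The self-contained sketch you add is a reasonable elaboration, but the paper itself does not attempt one.
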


\begin{proof}[Proof of Theorem \ref{thm:main_cum2}]$ $%do not remove these two dollars

\begin{enumerate}[wide, labelindent=0pt, label={\bf Part \arabic*.}, ref=\arabic*]
\item\label{part1} Let $B$ be a subset of $V = \{v_1,\,\dots,\,v_n\}$, where $V$ is a good set. 
From Definition \ref{eq:defX}, Lemma \ref{lem:joint-moments-grad} and Lemma~\ref{lem:equiv-fggs}, we have
\begin{equation}\label{eq:alternative}
   \Efd{ \prod_{v\in B}X_v }_\Lambda =
   \frac{1}{(2d)^{|B|}} \sum_{\substack{\eta:\,B\to E(B)\\ \eta(v)\in E_v \; \forall v}} \Efd{\zeta_{\eta(B)}}_\Lambda
   =\frac{1}{(2d)^{|B|}}  \sum_{\substack{\eta:\,B\to E(B)\\ \eta(v)\in E_v \; \forall v}} \det(M)_{\eta(B)},
\end{equation}
where $\zeta_S$ was defined as $\zeta_S\coloneqq \prod_{e \in S} (\psi_{e^+}-\psi_{e^-})(\bpsi_{e^{+}}-\bpsi_{e^-})$ for some
set of edges $S$. In the above we used that if the set $V$ is good, then any subset $B \subseteq V$ is also good. Hence, any edge in $\eta(B)$ is incident to exactly one vertex of $B$. Equation~\eqref{def:cum_Gras} and the expansion of the determinant in terms of permutations yield
\begin{multline*}
    \kappa\left(X_{v_1},\,\dots,\,X_{v_n}\right) \\
	 =
	\frac{1}{(2d)^{n}} 
	\sum_{\substack{\eta:\,V\to E(V)\\ \eta(v)\in E_v \; \forall v}} \sum_{\pi\in\Pi(V)} \left(|\pi|-1\right)\!!\, (-1)^{|\pi|-1} \prod_{B\in\pi} \sum_{\sigma\in S(B)} \sign(\sigma) \prod_{v\in B} M\left(\eta(v),\,\eta(\sigma(v))\right). 
\end{multline*}
Dropping for the moment the summation over $\eta$'s and the constant, the previous expression reads
\begin{equation}\label{eq:swap}
     \sum_{\pi\in\Pi(V)} \left(|\pi|-1\right)\!!\, (-1)^{|\pi|-1} \prod_{B\in\pi} \sum_{\sigma\in S(B)} \sign(\sigma) \prod_{v\in B} M(\eta(v),\,\eta(\sigma(v))).
\end{equation}

Let $\pi = \pi_1\cdots\pi_{|\pi|}$. We would like to swap the sum over $\sigma\in S(B)$ with the product over $B\in\pi$ in~\eqref{eq:swap}. To do this, we first note that for any function $f$ depending on $B$ and $\sigma$,
\[
    \prod_{B\in\pi} \sum_{\sigma\in S(B)} f(B,\,\sigma) = \sum_{\substack{\sigma_{\pi_i}\in S(\pi_i)\\i\in\left[|\pi|\right]}} \,\prod_{B\in\pi} f(B,\,\sigma) .
\]
In this way, we obtain from~\eqref{eq:swap}
\[
\sum_{\pi\in\Pi(V)} \left(|\pi|-1\right)\!!\, (-1)^{|\pi|-1} \sum_{\substack{\sigma_{\pi_i}\in S(\pi_i),\\i\in\left[|\pi|\right]}} \sign(\sigma) \prod_{B\in\pi} \prod_{v\in B} M(\eta(v),\,\eta(\sigma(v))) ,
\]
where $\sigma = \sigma_{\pi_1}\cdots\sigma_{\pi_{|\pi|}}$.

Before we continue, remember the partial order between partitions introduced in Subsection~\ref{subsec:graphs-and-permutations}.
We will use this to sum over the different partitions.

Let us first sum over $\sigma\in S(V)$.
Any such fixed $\sigma$ can always be uniquely decomposed into $m$ disjoint cyclic permutations without fixed points; that is, $\sigma = \sigma_1\cdots\sigma_m$ for some $m\in[n]$. Let us call $\pi_\sigma$ the partition induced by $\sigma$. Now the sum over elements of $\Pi(V)$ will be turned into a sum over $\Pi_\sigma(V)$, being $\Pi_\sigma(V)$ the set of partitions $\pi'\in\Pi(V)$ such that $\pi_\sigma \leq \pi'$.
In this way we obtain
\[
    \sum_{\sigma\in S(V)} \sign(\sigma) \prod_{v\in V} M(\eta(v),\,\eta(\sigma(v))) \sum_{\pi\in\Pi_\sigma(V)} \left(|\pi|-1\right)\!!\, (-1)^{|\pi|-1} .
\]
Notice that any partition $\pi$ such that $\pi \ge \pi_\sigma$ is given by an arbitrary union of elements of $\pi_\sigma$.
Therefore, there is a 1-to-1 correspondence between the partitions $\Pi_\sigma(V)$ and the partitions of the set $\{1,\,\dots,\,m_\sigma\}$, where $m_\sigma$ is the number of cycles in $\sigma$.   
Furthermore, such correspondence preserves the size of the partitions. So we can write our expression as
\[
    \sum_{\sigma\in S(V)} \sign(\sigma) \prod_{v\in V} M(\eta(v),\,\eta(\sigma(v))) \sum_{\pi\in\Pi\left([m_\sigma]\right)} \left(|\pi|-1\right)\!!\, (-1)^{|\pi|-1} .
\]
With this at hand, let us work with the sum over $\Pi\left([m_\sigma]\right)$.
Notice that for any given function $f$ of $|\pi|$ we know that
\begin{equation}\label{eq:sum-card} 
    \sum_{\pi\in\Pi([n])} f\left(|\pi|\right) = \sum_{k=1}^n g(k,\,n) f\left(k\right) ,
\end{equation}
with $g(k,\,n)$ a multiplicity factor.
That is, as long as $f$ depends only on $|\pi|$ and not on the complete permutation $\pi$, we can turn the sum in partitions of $V$ into a sum over number of blocks each partition has, at the expense of introducing a multiplicity factor.
This factor is given by the so-called Stirling numbers of the second kind, which are defined as the number of ways to partition a set of $n$ objects into $k$ non-empty subsets and given by 
\[
    \genfrac{\{}{\}}{0pt}{}{n}{k} \coloneqq \frac{1}{k!} \sum_{i=0}^k (-1)^i \binom{k}{i} (k-i)^n .
\]
Using this, we obtain
\begin{equation}\label{eq:from_part_to_stirl}
    \sum_{\pi\in\Pi\left([m_\sigma]\right)} \left(|\pi|-1\right)\!!\, (-1)^{|\pi|-1} = \sum_{k=1}^{m_\sigma} \genfrac{\{}{\}}{0pt}{}{m_\sigma}{k} (k-1)! \,(-1)^{k-1} .
\end{equation}
Finally, due to algebraic properties of the Stirling numbers of the second kind, it holds that
\begin{equation}\label{eq:magic-stirling} 
    \sum_{k=1}^{m} \genfrac{\{}{\}}{0pt}{}{m}{k} (k-1)!\, (-1)^{k-1} = 
    \begin{dcases}
    \hfil 1 & \text{if } m=1 , \\
    0 & \text{if } m\geq2 .
    \end{dcases}
\end{equation}
Hence, we obtain
\begin{align*}
    \sum_{\sigma\in S(V)} \sign(\sigma) \prod_{v\in V} M(\eta(v),\,\eta(\sigma(v))) &\sum_{k=1}^{m_\sigma} \genfrac{\{}{\}}{0pt}{}{m_\sigma}{k} (k-1)!\, (-1)^{k-1} \\
    &=  \sum_{\sigma\in S(V):\,m_\sigma=1} \sign(\sigma) \prod_{v\in V} M(\eta(v),\,\eta(\sigma(v))) \\
    &=  \sum_{\sigma\in S_{\mathrm{cycl}}(V)} \sign(\sigma) \prod_{v\in V} M(\eta(v),\,\eta(\sigma(v))) ,
\end{align*}
where the last equality comes from the fact that $m_\sigma=1$ if and only if $\sigma$ is a full cyclic permutation of $V$ without fixed points.
Now, for a full cycle $\sigma$ of length $n$, we have $\sign(\sigma) = (-1)^{n-1}$.
This way, we arrive to
\[
(-1)^{n-1} \sum_{\sigma\in S_{\mathrm{cycl}}(V)} \prod_{v\in V} M(\eta(v),\,\eta(\sigma(v))) = (-1)^{n-1}\sum_{\sigma\in S_{\mathrm{cycl}}(V)} \prod_{v\in V} M(\eta(v),\,\eta(\sigma(v))) .
\]
Finally, reintroducing the sum over directions of differentiation, we obtain
\[
\kappa\left(X_{v_1},\,\dots,\,X_{v_n}\right) = - \left(\frac{-1}{2d}\right)^n\sum_{\sigma\in S_{\mathrm{cycl}}(V)} \sum_{\substack{\eta:\,V\to E(V)\\ \eta(v)\in E_v \; \forall v}} \prod_{v\in V} M(\eta(v),\,\eta(\sigma(v))) .
\]
By taking the field $-X_v$, and using homogeneity of cumulants, this concludes the proof of the first statement.
\qedhere

\item\label{part2} As for the second statement of the Theorem, note that in view of~\eqref{eq:thm:maincum2} and by setting $\Lambda=\Lambda_\eps\coloneqq U_\eps$ the left-hand side of~\eqref{eq:cum_limit} can be equivalently written as
\begin{multline}\label{eq:limit_cum_X} 
% \nonumber
	\lim_{\eps\to 0} 
	\left(- \frac{1}{(2d)^n}
	\sum_{\sigma\in S_{\text{cycl}}(V)} \sum_{\substack{\eta:\,V\to E(V)\\ \eta(v)\in E_{v}\; \forall v}} 
	\prod_{v\in V} \eps^{-d} M\left(\eta(v_\eps),\,\eta(\sigma(v_\eps))\right)\right)
 \\ =
	\sum_{\sigma\in S_{\text{cycl}}(V)} \sum_{\substack{\eta:\,V\to E(V)\\ \eta(v)\in E_{v}\; \forall v}} 
	\lim_{\eps\to 0}
	\left(- \frac{1}{(2d)^n}
	\prod_{v\in V} \eps^{-d} M\left(\eta(v_\eps),\,\eta(\sigma(v_\eps))\right)\right),
\end{multline} 
where now $M\coloneqq M_{U_\eps}$.

\begin{remark}\label{rem:different-eps}
    Notice that, in principle, we should take $\sigma^\eps \in S_{\cycl}(V_\eps)$ and $\eta^\eps:\, V_\eps \to E(V_\eps)$ in~\eqref{eq:limit_cum_X}.
    However, there is a natural bijection between $S_{\cycl}(V)$ an $S_{\cycl}(V_\eps)$: for $\sigma \in S_{\cycl}(V)$, define $\sigma^\eps(v_\eps) \coloneqq \left\lfloor \frac{\sigma(v)}{\eps} \right\rfloor$. Likewise, we have a natural bijection between $\eta$'s and $\eta^\eps$'s. 
    Notice that this bijection works for every $\eps >0$ (which is not the case in the hexagonal lattice, as it will be discussed in Subsection~\ref{subsec:general_graph}). Therefore, we will simply write $\sigma$ and $\eta$ without the dependence in $\eps$ for the remainder of this proof. This also allows us to import the notion of $\eta(v)$ when $\eps \to 0$ as direction rather than directed edge, since the graph structure disappears.
    We will use similar bijections in the proof of Theorem~\ref{thm:main_cum3_cont}.
\end{remark}

We can now continue with the proof. By~\eqref{eq:limit_cum_X}  it suffices to study
\[
\lim_{\eps\to 0}\eps^{-d} M\left(\eta(v_\eps),\,\eta(\sigma(v_\eps))\right) .
\]
Using Lemma~\ref{lem:cip4}, this expression converges to
\begin{equation}\label{eq:contlimit_g2}
     \partial^{(1)}_{\eta^*(\eta(v))}\partial^{(2)}_{\eta^*(\eta(\sigma(v)))} g_U\left(v,\, \sigma(v)\right) .
\end{equation}
Remember that, as $g_U$ is differentiable off-diagonal, for $x \neq y$ we have 
\begin{equation*}
     \partial^{(1)}_{e}\partial^{(2)}_{f} g_U(x,y) 
	 =
     -\partial^{(1)}_{-e}\partial^{(2)}_{f} g_U(x,y) 
	 =
     -\partial^{(1)}_{e}\partial^{(2)}_{-f} g_U(x,y) .
\end{equation*}
However, as $\sigma$ is cyclic, any such negative signs above will appear twice in the product \eqref{eq:limit_cum_X} and therefore will cancel, so that
\[
\eqref{eq:limit_cum_X}=- \frac{1}{(2d)^n}
\sum_{\sigma\in S_{\mathrm{cycl}}(V)} \sum_{\eta':\,V\to\{e_1,\,\ldots,\,e_{2d}\}} 
\prod_{v\in V} \partial^{(1)}_{\eta'(v)}\partial^{(2)}_{\eta'(\sigma(v))} g_U\left(v,\,\sigma(v)\right).
\]
Now note that
\begin{equation*}
	\prod_{v\in V} \partial^{(1)}_{\eta'(v)}\partial^{(2)}_{\eta'(\sigma(v))} g_U\left(v,\,\sigma(v)\right)
	=
	\prod_{v\in V} \partial^{(1)}_{\eta(v)}\partial^{(2)}_{\eta(\sigma(v))} g_U\left(v,\,\sigma(v)\right),
\end{equation*}
for any ${\eta},\, {\eta}'$ such that the scalar product $\inpr{{\eta}(v)}{ {\eta}'(v)} = \pm 1$ for all $v$.
Therefore, considering $2^n$ choices of ${\eta'}:\,V \to \{e_1,\,\dots,\,e_{2d}\}$ for a single $\eta:\, V\to \{e_1,\,\dots,\,e_{d}\}$, we arrive at
\begin{align}
\lim_{\eps\to 0}\eps^{-dn} \kappa\left(X_{v_1}^\eps,\,\dots,\,X_{v_n}^\eps\right)
& = 
- \frac{1}{d^n}
\sum_{\sigma\in S_{\mathrm{cycl}}(V)} \sum_{\eta:\,V\to\{e_1,\,\ldots,\,e_{d}\}} 
\prod_{v\in V} \partial^{(1)}_{\eta(v)}\partial^{(2)}_{\eta(\sigma(v))} g_U\left(v,\,\sigma(v)\right).\label{eq:lim_abs}
\qedhere
\end{align}

\end{enumerate}
\end{proof}
\begin{proof}[Proof of Corollary~\ref{cor:degree_field}]
 The proof of Theorem~\ref{thm:main_cum2} can be carried through as long as one has a field whose cumulants satisfy~\eqref{eq:alternative}. Therefore by Proposition~\ref{thm:swan_UST_fGFF} we can also state the results of Theorem~\ref{thm:main_cum2} for the degree field.
\end{proof}

\subsection{Proof of Theorem~\ref{thm:main_cum3_discrete}}

In the following proof, we will write $\kappa$ instead of $\kappa_\Lambda^{\bzero}$ for simplicity as before. This proof follows the same structure of Theorem~\ref{thm:main_cum2}. However, we have to account for the more intricate set of permutations of edges.
 
Call $Z_v\coloneqq X_v Y_v$. Let $v_1,\,\ldots,\,v_n$ be as in the statement of the theorem. Once again, we write the joint moments of the field of interest as a sum. Using \eqref{eq:expansion-XvYv}, we have 
\begin{equation}\label{eq:alternative2}
   \Efd{ \prod_{v\in B}Z_v }_\Lambda = 
    \frac{1}{(2d)^{|B|}}
    \sum_{\substack{\eta:\,B \to E(B)\\ \eta(v)\in E_v \; \forall v}}
    \sum_{A \subseteq E(B) \setminus \eta(B)}
    (-1)^{|A|}
    \Efd{\zeta_{\eta(B) \cup A}}_\Lambda,
\end{equation}
where, from Lemma \ref{lem:joint-moments-grad},
\[
   \Efd{\zeta_{\eta(B) \cup A}}_\Lambda = \det(M)_{\eta(B) \cup A}.
\]
This determinant can be written in terms of permutations of edges as
\[
    \det(M)_{\eta(B) \cup A} = \sum_{\tau\in S(\eta(B) \cup A)} \sign(\tau) \prod_{v\in  \eta(B) \cup A} M\left(v,\,\tau(v)\right).
\]
Note that, as there can be multiple edges attached to the same site, there is no longer a correspondence between permutation of edges in $\eta(B) \cup A$ and permutations in $B$. Using~\eqref{def:cum_Gras}, we get
\begin{multline*}
\kappa\left(Z_{v_1},\,\dots,\,Z_{v_n}\right) 
	= \\
	\left(\frac{1}{2d}\right)^{n}
	\sum_{\eta}
	\sum_{A} (-1)^{|A|}
	\sum_{\pi\in\Pi(V)} \left(|\pi|-1\right)\!!\, (-1)^{|\pi|-1} \prod_{B\in\pi} \sum_{\tau\in S(\mcE_B)} \sign(\tau) \prod_{f\in \mcE_B} M\left(f,\,\tau(f)\right),
\end{multline*}
where, once again, the sum over $\eta$'s is over all functions $\eta:\, V \to E(V)$ with $\eta(v)\in E_v $ for all $v$, the sum over $A$'s is over the subsets of $A \subseteq {E}(V) \setminus \eta(V)$, and $\mcE_B=\mcE_B(\eta,\,A)$ is the set of edges in $\eta(V) \cup A$ that intersect sites of $B$.  In the above, we are using that $V$ is a good set, and therefore $\{\mcE_B(\eta,A):\, B \in \pi\}$ provides a partition of $\mcE \coloneqq\eta(V) \cup A$. In the following, we will write $\mcE_v$ to denote $\mcE_{\{v\}}$.

Before proceeding to analyze the sum over the partitions, as we did in the proof of Theorem~\ref{thm:main_cum2}, notice that $|A|=|\mcE| - n$. Therefore, the sum above only depends on $\eta$ and $A$ through $\mcE$. 
We notice that for a fixed $\mcE$ there are $ \prod_{v \in V} |\mcE_{v}|$ choices for the pair $(\eta(V),A)$ yielding the same $\mcE$, so the sum above can be written as
\begin{multline*} 
\kappa\left(Z_{v_1},\,\dots,\,Z_{v_n}\right) 
= \\
\left(\frac{-1}{2d}\right)^{n} \sum_{\mcE:\, |\mcE_{v}| \ge 1\; \forall v} K(\mcE) \sum_{\pi\in\Pi(V)} 
\left(|\pi|-1\right)\!!\, (-1)^{|\pi|-1} \prod_{B\in\pi} \sum_{\tau\in S(\mcE_B)} \sign(\tau) 
\prod_{f\in \mcE_B} M\left(f,\,\tau(f)\right).
\end{multline*}

We now concentrate on the sum over the partitions again, dropping the sum over $\mcE$ for the moment. That is, we examine
\begin{equation}\label{eq:swap2}
	\sum_{\pi\in\Pi(V)} \left(|\pi|-1\right)\!!\, (-1)^{|\pi|-1} \prod_{B\in\pi} \sum_{\tau\in S(\mcE_B)} \sign(\tau) 
	\prod_{f\in \mcE_B} M\left(f,\,\tau(f)\right) .
\end{equation}
Once again, we write $\pi = \{\pi_1,\,\dots,\, \pi_{|\pi|}\}$ and use \eqref{eq:sum-card} to swap the sum over $\tau\in S(\mcE_B)$ with the product over $B\in\pi$ in~\eqref{eq:swap2}. This way, we obtain
\[
    \sum_{\pi\in\Pi(V)} \left(|\pi|-1\right)\!!\, (-1)^{|\pi|-1} 
    \sum_{\substack{\tau_i\in S(\mcE_{\pi_i}),\\i\in\left[|\pi|\right]}} 
    \sign(\tau) \prod_{B\in\pi} \prod_{f\in \mcE_B} M\left(f,\,\tau(f)\right) ,
\]
where $\tau = \tau_1\cdots\tau_{|\pi|}$.

Now, we wish to swap the sum over $\pi\in\Pi(V)$ with the sum over permutations $\tau$.
Let us first sum over $\tau\in S(\mcE)$. Any such $\tau$ can always be uniquely decomposed into $m(\tau)$ disjoint cyclic permutations, that is, $\tau = \tau_1\cdots\tau_{m(\tau)}$ for some $m(\tau)\in \{1,\,\dots,\,|\mcE|\}$. Recall the definitions of $\pi^V_\tau$ and the set of connected permutations $S_{\co}(\mcE)$ of $\mcE$ as defined in Subsection~\ref{subsec:graphs-and-permutations}. Denote by $\Pi_\tau(V) \subseteq \Pi(V)$ the subset of partitions of $V$ that are coarser than $\pi^V_\tau$. With this notation in place, we can rewrite the last expression as 
\begin{equation}\label{eq:stirling_2}
    \sum_{\sigma\in S(\mcE)} \sign(\sigma) \prod_{f\in \mcE} M\left(f,\,\sigma(f)\right) \sum_{\pi\in\Pi_\tau(V)} \left(|\pi|-1\right)\!!\, (-1)^{|\pi|-1} .
\end{equation}
Notice that $\pi\in\Pi_{\tau}(V)$ has at most $m(\tau)$ blocks. Using again expression \eqref{eq:from_part_to_stirl} in~\eqref{eq:stirling_2}, we get

\begin{equation*}
   \sum_{\tau\in S(\mcE)} \sign(\tau) 
   \prod_{f\in \mcE} M\left(f,\,\tau(f)\right) 
   \sum_{k=1}^{{m}(\tau)} \genfrac{\{}{\}}{0pt}{}{{m}(\tau)}{k} (k-1)!\, (-1)^{k-1} 
    =
	\sum_{\tau\in S_{\text{co}}(E)} \sign(\tau) \prod_{f\in \mcE} M\left(f,\,\tau(f)\right).
\end{equation*}
The equality is due to the fact that~\eqref{eq:magic-stirling} forces $ m(\tau)=1$, which in turn means that $\tau$ is connected. Reintroducing the sum over $\mcE$ (with multiplicity), we obtain
\[
	\kappa\left(Z_v:\, v \in V\right) 
	= 
	\left(\frac{-1}{2d}\right)^n
	\sum_{\mcE:\, |\mcE_{v}|\ge 1 \; \forall v} K(\mcE) 
	\sum_{\tau\in S_{\co}(\mcE)} \sign(\tau) \prod_{f\in \mcE} M\left(f,\,\tau(f)\right).
\]
This concludes the proof of the theorem.

\subsection{Proof of Theorem~\ref{thm:main_cum3_cont}}

The proof is divided into four steps.
In \ref{step1}, we start from the final expression obtained in Theorem~\ref{thm:main_cum3_discrete} and show that it suffices to sum over only bare permutations $\tau$, instead of the bigger set of connected permutations.
In~\ref{step2}, we simplify the expression even further, showing that only permutations that enter and exit through parallel edges on every point will give a non-zero contribution in the scaling limit as $\eps\rightarrow 0$.
In~\ref{step3}, we write the expression in terms of contributions of the permutations acting locally in the vicinity of a vertex and globally mapping an edge incident to one vertex to an edge which is incident to another vertex.
Finally, in~\ref{step4}, we identify the global multiplicative constant of the cumulants.

\begin{enumerate}[wide, labelindent=0pt, label={\bf Step \arabic*.}, ref=Step \arabic*]
\item\label{step1}
From Theorem~\ref{thm:main_cum3_discrete}, we start with the expression
\[
	\kappa\left(Z^\eps_v:\, v \in V\right) 
= 
	\left(\frac{-1}{2d}\right)^n
	\sum_{\mcE:\, |\mcE_{v_\eps}|\ge 1 \; \forall v} K(\mcE) 
	\sum_{\tau\in S_{\co}(\mcE)} \sign(\tau) \prod_{f\in \mcE} M\left(f,\,\tau(f)\right),
\]
where now $\mcE$ is a subset of edges of $U_\eps$ and $M=M_{U_\eps}$.
Throughout this proof, we will often omit the dependence of $\mcE$, $\tau$ (and later of other functions as well) on $\eps$ by using the same idea as in Remark~\ref{rem:different-eps}. This allows us not only to simplify the expressions, but also to use the notions of connected/bare permutations as $\eps \to 0$.

We will reduce the summation over $\tau \in S_{\co}$ to a summation over $\tau \in S_{\bare}$ (recall the definitions in Section \ref{subsec:graphs-and-permutations}).
Note that, for the edges $E_{\tau}(V)$ of the induced graph $V_{\tau}$, we have that $|V| \leq |E_{\tau}(V)| \leq |E(V)|$ as $\tau$ is connected.
On the other hand, for each $\tau \in S_{\co}$,

\begin{equation}\label{eq:prod_M}
	\prod_{f\in \mcE} M\left(f,\,\tau(f)\right) 
	=
	\mathcal{O} \left(\prod_{f\in \tilde E_{\tau}(V)}\varepsilon^{d} \, \partial^{(1)}_{\eta^*(f)}\partial^{(2)}_{\eta^*(\tau(f))} g_U\left(f^-,\, \tau(f)^-\right) \right), 
\end{equation}
being $\tilde E_\tau(V)$ those edges in $E_v$ such that $\tau(f)\in E_w$, with $w\neq v$, and $\eta^*$ is the direction induced by $f$ resp. $\tau(f)$ on the point $f^-$ resp. $\tau(f)^-$ as defined in Lemma~\ref{lem:cip4}. 
To show~\eqref{eq:prod_M} notice that, by Lemma~\ref{lem:cip4},
\begin{equation}\label{eq:order_M}
\overline{M}(f,\,\tau(f))
=
\lim_{\eps\to 0}\eps^{-d}M_{U_\eps}(f,\,\tau(f))
=
\partial^{(1)}_{\eta^*(f)}\partial^{(2)}_{\eta^*(\tau(f))} g_U\left(v, \, v'\right)
\end{equation}
whenever $f^-=v_\eps \neq \tau(f)^-=v'_\eps$, for some $v,\,v'\in V$. Once again, we are disregarding the dependence of $f$ in $\eps$, by using the natural bijection of Remark~\ref{rem:different-eps}. Using \citet[Lemma 5]{cipriani2022properties},
\begin{equation}
\overline{M}(f,\,\tau(f)) =
\lim_{\eps\to 0}M_{U_\eps}(f,\,\tau(f)) =
\begin{dcases}
    \nabla^{(1)}_{e_i}\nabla^{(2)}_{e_j}G_0(o,\,o)&\text{if } f^- = \tau(f)^-,\\
    \hfil 0 & \text{if } f^- \neq \tau(f)^-,
\end{dcases}\label{eq:cases_M}
\end{equation}
being $\eta^*(f)=e_i$ and $\eta^*(\tau(f)) = e_j$ for some $i,\,j\in[2d]$. Therefore, we can split the product $\prod_{f\in \mcE} M\left(f,\,\tau(f)\right)$ into one contribution where the permutation maps edges incident to one vertex $v_\eps$ to edges which are incident to another vertex $v'_\eps\neq v_\eps$, and another contribution with edges incident to vertices $v_\eps$ which are mapped by $\tau$ to vertices incident to the same vertex $v_\eps$. For the former, $M(f,\,\tau(f))$ is of order $\eps^d$ by~\eqref{eq:order_M}; for the latter, it is of order one by~\eqref{eq:cases_M}. This shows~\eqref{eq:prod_M}.

Now, remember that we rescale the cumulants by $\eps^{-d|V|}$, hence the expression in~\eqref{eq:prod_M} will be non-zero when taking the limit $\eps \to 0$ if and only if $|V|=|E_{\tau}(V)|$ (it can never diverge since $\tau$ is connected and hence $|V| \leq |E_{\tau}(V)|$). This implies that we can consider only permutations $\tau$ which are bare. Once again, following the idea of Remark~\ref{rem:different-eps}, we are ignoring the dependence of $\tau$ in $\eps$, allowing us to take the limit as $\eps \to 0$.

\item\label{step2} Now, we examine the expression
\begin{equation}\label{eq:now_bare}
	\left(\frac{-1}{2d}\right)^n
	\sum_{\mcE:\, |\mcE_{v_\eps}|\ge 1 \; \forall v_\eps}  K(\mcE) 
	\sum_{\tau\in S_{\bare}(\mcE)} \sign(\tau) \prod_{f\in \mcE} M\left(f,\,\tau(f)\right).
\end{equation}
Recall that each bare $\tau$ defines an entry and an exit edge in $v_\eps$, as stated in Subsection~\ref{subsec:graphs-and-permutations}.
Consider a permutation $\tau \in S_{\bare}(\mcE)$ which enters $v_\eps$ through the edge $f$ and exits through the edge $f'$ such that $\inpr{f}{f'} \neq \pm 1$.
We construct another permutation $\rho \in S_{\bare}(\mcE^\prime)$ for a possibly different $\mcE'$, such that $\rho (e)=\tau(e)$ for all $e \not\in \mcE_{v_\eps}$, and such that it will cancel the contribution of $\tau$ in \eqref{eq:now_bare}. 

To construct such a permutation $\rho$, we take $\mcE^\prime \coloneqq \left( \cup_{v' \neq v} \mathcal{E}_{v'_\eps} \right)\cup \mcE''_{v_\eps}$, where
\[
    \mcE''_u \coloneqq \{e\in \mcE_u:\,\inpr{e}{f}=\pm 1\}\cup \{-e\in \mcE_u:\,\inpr{e}{f}\neq \pm 1\},\quad u\in U_\eps .
\] 
Remember that for $e = (u,\,u+e_i)$, we write $-e$ to denote $(u,\,u-e_i)$. In words, $\mcE''_{v_\eps}$ is the reflection of $\mcE_{v_\eps}$ with respect to the direction induced by $f$. See Figure~\ref{fig:EtoEprime} for two examples in $d=2$.

\begin{figure}[H]
    \centering
    \includegraphics[scale=.7]{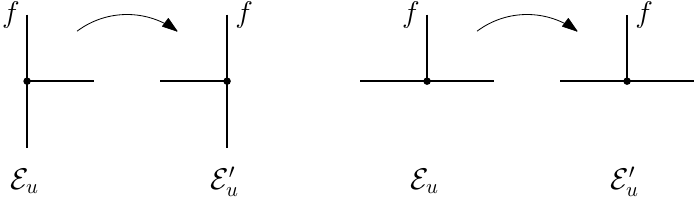}
    \caption{Two examples of $\mcE_u$ and its corresponding $\mcE'_u$ in $d=2$.}
    \label{fig:EtoEprime}
\end{figure}

\noindent
Then we set $\rho (e)=\tau(e)$ for all $e \not\in \mcE_{v_\eps}$ and $\rho(f)=-\tau(f)$ as well as $\rho(-e)=-\tau(-e)$ for all $e \in \mcE''_{v_\eps}$.
That is, we invert every edge of $\mathcal{E}_{v_\eps}$ that is not $f$. See Figure~\ref{fig:opposite} for an example of $\tau$ and $\rho$ in $d=2$. 

\begin{figure}[ht!]
    \centering
    \includegraphics[scale=.6]{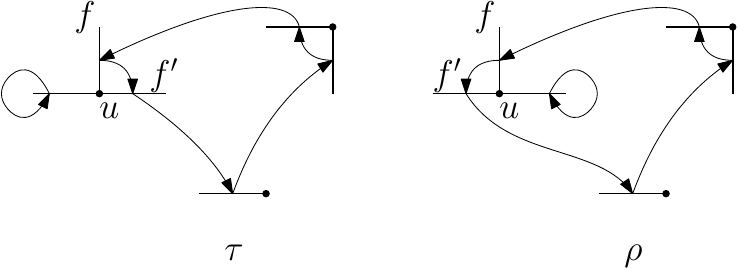}
    \caption{Example of two permutations $\tau$ (left) and $\rho$ (right) in $d=2$ whose exit edges from $u$ are opposite.
    The contributions of these two permutations will cancel each other in the limit.}
    \label{fig:opposite}
\end{figure}

\noindent
Under these conditions, by~\eqref{eq:cases_M} and the translation/rotation invariance of the discrete Green's function in $\mathbb{Z}^d$, we have
\begin{equation}\label{eq:perp1} 
\overline{M}(e,\tau(e))
=
\nabla^{(1)}_{e_2}\nabla^{(2)}_{e_1}G_0(o,\,o)
=
\nabla^{(1)}_{e_2}\nabla^{(2)}_{-e_1}G_0(o,\,o) 
\end{equation}
for all $e \in \mathcal{E}_{v_\eps}$ such that $e^{-}=\tau(e)^-$ and $\langle e, \tau(e)\rangle =0$. 

Now, remember that we assumed that $\tau$ leaves $v_\eps$ through an edge $f'$ such that $\inpr{f}{f'} \neq \pm 1$.
Therefore, $\rho$ leaves $v_\eps$ through $-f'$. Let us call $v'$ the point such that $\tau(f')\in \mcE_{v'_\eps}$. Using Lemma~\ref{lem:cip4}, we have that
\begin{align}\label{eq:perp2} 
\overline{M}(f',\,\tau(f'))
& = \nonumber
   \partial^{(1)}_{\eta^*(f')}\partial^{(2)}_{\eta^*(\tau(f'))} g_U\left(v,\, v'\right)
\\&  = \nonumber
   -\partial^{(1)}_{\eta^*(-f')}\partial^{(2)}_{\eta^*(\tau(f'))} g_U\left(v,\, v'\right)
\\&  = \nonumber
   -\partial^{(1)}_{\eta^*(-f')}\partial^{(2)}_{\eta^*(\rho(f'))} g_U\left(v,\, v'\right)
\\&  = 
\overline{M}(-f',\,\rho(f')).
\end{align}
Furthermore, notice that $K(\mathcal{E})=K(\mathcal{E}^\prime )$ and $\sign(\tau)=\sign(\rho)$.
Now, examining the product on the rightmost part of \eqref{eq:now_bare}, 
we have that $\rho$ and $\tau$ coincide for all edges outside of $\mathcal{E}_{v_\eps}$.
As for the contributions given by factors of edges in $\mathcal{E}_{v_\eps}$, \eqref{eq:perp1} and \eqref{eq:perp2} imply that their product gives the same value under $\tau$ and $\rho$, except for the opposite sign of~\eqref{eq:perp2}.
Therefore, for any permutation $\tau$ which exits $v_\eps$ through an edge that is orthogonal to the entry edge, there exists a permutation $\rho$ such that
\begin{equation}\label{eq:cancel}
K(\mcE)\sign(\tau)  \prod_{f\in \mcE} \overline{M}\left(f,\,\tau(f)\right) 
=
-K(\mcE')\sign(\rho) \prod_{f\in \mcE^\prime} \overline{M}\left(f,\,\rho(f)\right).
\end{equation}
Thus the only bare permutations which give a contribution to the limit of~\eqref{eq:now_bare} as $\eps\to 0$ are those which enter $v_\eps$ through an edge $f$ and exit $v_\eps$ through either $f$ itself or $-f$.
\end{enumerate}

For the remainder of the proof, we will use the notation
\begin{equation}\label{eq:def_bar_M}
\overline{M}(f,\,\tau(f)) = 
\begin{dcases}
    \hfil \nabla^{(1)}_{e_i}\nabla^{(2)}_{e_j}G_0(o,\,o)&\text{if } f^- = \tau(f)^-,\\ 
    \partial^{(1)}_{e_i}\partial^{(2)}_{e_j} g_U\left(v,\, v'\right)  &\text{if } f^-=v_\eps \neq v'_\eps=\tau(f)^-,\ v,\,v'\in V
\end{dcases}
\end{equation}
whenever $\eta^*(f)=e_i$ and $\eta^*(\tau(f)) = e_j$ for some $i,\,j\in[2d]$.
We now need to further expand the expression
\begin{equation}\label{eq:now_internal_ext}
	\left(\frac{-1}{2d}\right)^n
	\sum_{\mcE:\,|\mcE_v|\ge 1\,\forall v\in V}  K(\mcE) 
	\sum_{\tau\in S^*_{\bare}(\mcE)}\sign(\tau) \prod_{f\in \mcE} \overline{M}\left(f,\,\tau(f)\right)
\end{equation}
where $S^*_{\bare}(\mcE)$ indicates bare permutations that exit a point through an edge parallel to the entry one (it can be the same). Note that now $\mcE_v\subseteq E_o+v $, $v\in V$.

\begin{enumerate}[wide, labelindent=0pt, label={\bf Step \arabic*.}, ref=Step \arabic*]
\setcounter{enumi}{2}
\item\label{step3} As stated in Subsection~\ref{subsec:graphs-and-permutations}, any bare $\tau$ induces a permutation $\sigma\in S_{\cycl}(V)$ on vertices. We will extract from $\tau$ a permutation $\sigma$ among vertices and a choice of edges $\eta$ (as in Theorem~\ref{thm:main_cum2}) and we will separate it from what $\tau$ does ``locally'' in the edges corresponding to a given point. To do this, we need to introduce, for fixed $\tau$, the functions
\[
\eta:\,V\to E(V)\text{ such that }\eta(v)\in E_{v}\text{ for all }v
\]
such that $\eta(v)$ is the edge from which $\tau$ enters $v$, and
\[
\gamma:\,V\to\{-1,\,1\}
\]
which equals $1$ at $v$ if $\tau$ enters and exits $v$ through the same edge $\eta(v)$, and equals $-1$ if $\tau$ exits $v$ from $-\eta(v)$. In short, the exit edge from $v$ according to $\tau$ is $\gamma(v)\eta(v)$ (from now on written as $\gamma\eta(v)$ to relieve notation). Note that $\eta$, $\sigma$ and $\gamma$ determine $E_\tau(V)$ and are functions of $\tau$ (we will not write this to avoid heavy notation). With the above definitions we have that~\eqref{eq:now_internal_ext} becomes
\begin{multline}
\left(\frac{-1}{2d}\right)^n
	\sum_{\mcE:\, |\mcE_{v}|\ge 1 \; \forall v} 
	\sum_{\substack{\eta:\,V\to E(V)\\ \eta(v)\in E_{v}\; \forall v}}\;
	\sum_{\sigma\in S_{\cycl}(V)} \sum_{\gamma:\,V\to\{-1,\,1\}} 
	\left(\prod_{v\in V} K(\mcE_v)\overline{M}\left(\gamma\eta(v),\,\eta(\sigma(v))\right)\right)\times 
	\\
	\times\sum_{\tau\in S^*_{\bare}(\mcE;\,\eta,\,\sigma,\,\gamma)}\sign(\tau) \prod_{f\in \mcE\setminus \{\gamma\eta(V)\}} 
	\overline{M}\left(f,\,\tau(f)\right),\label{eq:first_lim}
\end{multline}
where $\gamma\eta(V) \coloneqq \{\gamma\eta(v):\,v\in V\}$, and $S^*_{\bare}(\mcE;\,\eta,\,\sigma,\,\gamma)$ is the set of bare permutations as in~\eqref{eq:now_internal_ext} which now enter and exit each point $v$ through the edges prescribed by $\eta$, $\sigma$ and $\gamma$. In this case we will say that $\tau$ is compatible with $(\mcE;\,\eta,\,\sigma,\,\gamma)$. Figures~\ref{fig:eta_sigma_gamma_2_figures_compatible} and \ref{fig:eta_sigma_gamma_incompatible_4_figures} give examples of compatible resp. non-compatible pairs of permutations in $d=2$.

\begin{figure}[ht!]
    \centering
    \includegraphics[scale=.6]{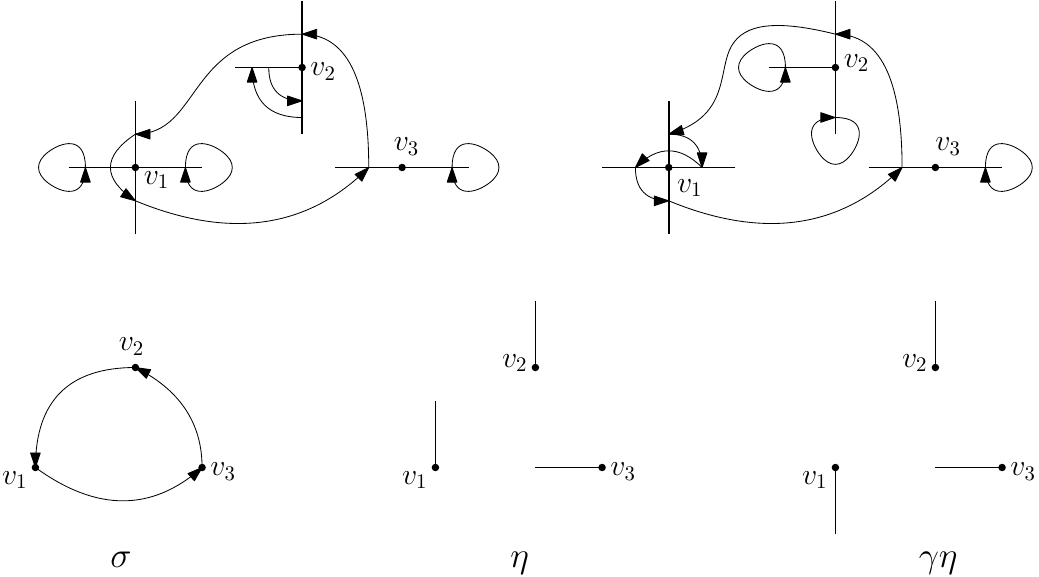}
    \caption{Top: two different compatible permutations in $d=2$. Bottom: their corresponding $\sigma$, $\eta$ and $\gamma$.}
    \label{fig:eta_sigma_gamma_2_figures_compatible}
\end{figure}

\begin{figure}[ht!]
    \centering
    \includegraphics[scale=.6]{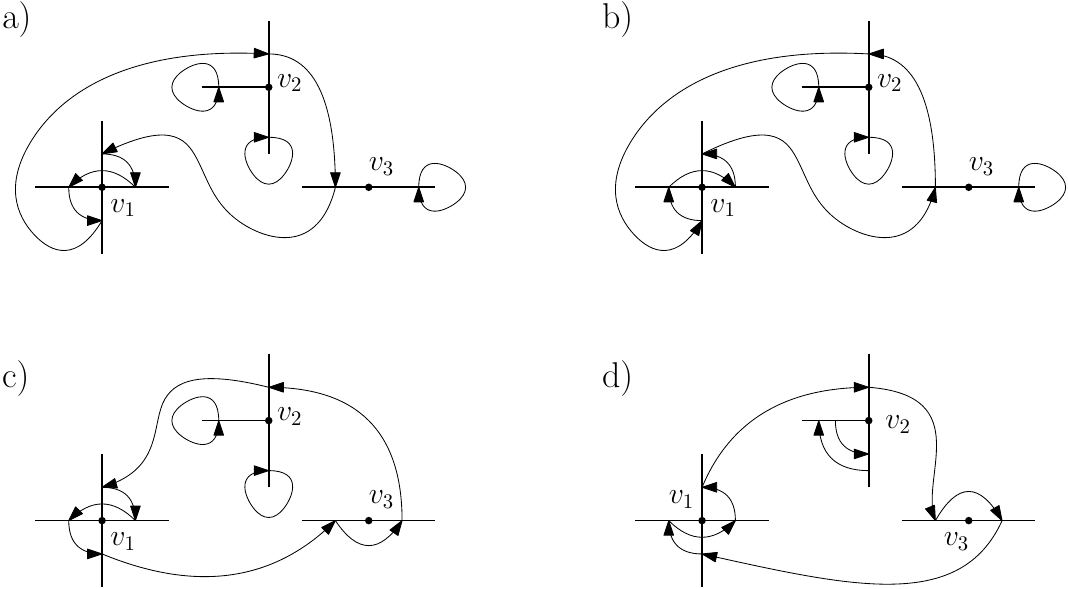}
    \caption{Four different permutations that are not compatible with those in Figure~\ref{fig:eta_sigma_gamma_2_figures_compatible}. a) Permutation that respects $\eta$ and $\gamma$ but not $\sigma$. b) Permutation that respects $\sigma$ and $\gamma$ but not $\eta(v_1)$. c) Permutation that respects $\sigma$ and $\eta$ but not $\gamma(w_3)$. d) Permutation that does not respect $\sigma$, nor $\eta(v_1)$, nor $\gamma(w_3)$.}
    \label{fig:eta_sigma_gamma_incompatible_4_figures}
\end{figure}

We have that \eqref{eq:first_lim} is equal to
\begin{align}
\left(\frac{-1}{2d}\right)^n
	&\sum_{\mcE:\, |\mcE_v|\ge 1 \; \forall v} \sum_{\substack{\eta:\,V\to E(V)\\ \eta(v)\in \mcE_v\; \forall v}}\;\sum_{\sigma\in S_{\cycl}(V)} \sum_{\gamma:\,V\to\{-1,\,1\}} \left(\prod_{v\in V} \gamma(v)K(\mcE_v)\overline M\left(\eta(v),\,\eta(\sigma(v))\right)\right)\nonumber\times \\
 &
	\times \sum_{\tau\in S^*_{\bare}(\mcE;\,\eta,\,\sigma,\,\gamma)}\sign(\tau) \prod_{f\in \mcE\setminus \{\gamma\eta(V)\}} \overline M\left(f,\,\tau(f)\right)
\nonumber\\
=- & \sum_\eta\sum_{\sigma}\left(\prod_{v\in V}\overline M\left(\eta(v),\,\eta(\sigma(v))\right)\right) \sum_{\mcE:\, \mcE_v\ni\eta(v)\,\forall v }K(\mcE) \sum_{\gamma} \sum_{\tau\in S^*_{\bare}(\mcE;\,\eta,\,\sigma,\,\gamma)}\frac{\sign(\tau)}{(-1)^{n-1}}\, \times \nonumber\\
&\times \prod_{v\in V}\left[\frac{1}{2d} \gamma(v) \prod_{f\in \mcE_v\setminus \{\gamma\eta(v)\}} \overline M\left(f,\,\tau(f)\right)\right].\label{eq:second_lim}
\end{align}
Note that $\gamma(v)$ is accounted for since, by Lemma~\ref{lem:cip4},
\[
\overline M\left(\gamma\eta(v),\,\eta(\sigma(v))\right) = \gamma(v) \, \overline M\left(\eta(v),\,\eta(\sigma(v))\right).
\]
In the next step, we will fix $\eta$ and $\sigma$, and prove that 
\begin{equation}\label{eq:at_constant}
	\sum_{\mcE:\, \mcE_v\ni \eta(v)\,\forall v } K(\mcE) \sum_{\gamma} 
	\sum_{\tau\in S^*_{\bare}(\mcE;\,\eta,\,\sigma,\,\gamma)}\frac{\sign(\tau)}{(-1)^{n-1}} \prod_{v\in V}\left[\frac{1}{2d} \gamma(v) \prod_{f\in \mcE_v\setminus \{\gamma\eta(v)\}} \overline M\left(f,\,\tau(f)\right)\right]
\end{equation}
is a constant independent of $v$, $\eta$ and $\sigma$.

\item\label{step4} Using $\sigma$, $\eta$ and $\gamma$, we have been able to recover in~\eqref{eq:second_lim} an expression that depends on permutations of vertices and directions similar to that of Theorem~\ref{thm:main_cum2} item~\ref{item2_cum2}. To complete the proof we will perform a ``surgery'' to better understand expression~\eqref{eq:at_constant}. This surgery aims at decoupling the local behavior of $\tau$ at a vertex versus the jumps of $\tau$ between different vertices. To do this, we define
\begin{equation}\label{eq:def_omega}
\omega^\tau_v(f)\coloneqq \begin{cases}
    \tau(f)&\text{if }f\neq \gamma\eta(v)\\
    \tau(\eta(v))&\text{if }f= \gamma\eta(v),\,\gamma(v)= -1
\end{cases},\quad f\in \mcE_v\setminus\{\eta(v)\}
\end{equation}
and
\begin{equation}\label{eq:def:tau_minus}
\tau\setminus\omega^\tau_v(f)\coloneqq \begin{cases}
    \tau(f)&\text{if }f\notin \mcE_v\\
    \eta(\sigma(v))&\text{if }f= \eta(v)
\end{cases},\quad f\in (\mcE\setminus \mcE_v) \cup \{\eta(v)\}.
\end{equation}
In words, $\omega^\tau_v$ is the permutation induced by $\tau$ on $\mcE_v\setminus\{\eta(v)\}$ by identifying the entry and the exit edges. On the other hand, $\tau\setminus\omega^\tau_v(f)$ follows $\tau$ globally until it reaches the edges incident to $v_\eps$, from where it departs reaching the edges of the next point. In Figure~\ref{fig:omega} we can see some examples for $\gamma(v)=-1$ in $d=2$, as the action of the surgery is trivial when $\gamma(v)=1$. 
\begin{figure}[H]
    \centering
    \includegraphics[scale=.7]{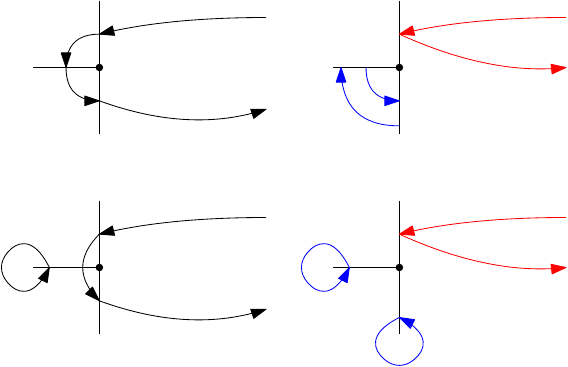}
    \caption{Example of two permutations $\tau$ on the left and their respective $\omega_v^\tau$ (in blue) and $\tau\setminus\omega_v^\tau$ (in red) on the right, both for the case $\gamma(v)=-1$, in $d=2$.}
    \label{fig:omega}
\end{figure}
In the following, we state two technical lemmas the we need to complete the proof of Theorem~\ref{thm:main_cum3_cont}. Their proofs will be given on page~\pageref{ref:pg_proof}.

\begin{lemma}\label{lem:bij_tau}
Let $\mcE\subseteq E(V)$, $ \eta:\,V\to E(V)$ such that $\eta(v)\in \mcE_v$ for all $v\in V$, $\sigma\in S_{\cycl}(V)$, $\gamma:V\to\{-1,\,1\}$ and let $\tau$ be compatible with $(\mcE;\,\eta,\,\sigma,\,\gamma)$. For every $v\in V$ there is a bijection between $S(\mcE_v\!\setminus\!\{\eta(v)\})$ and $\{\omega^\tau_v:\,\tau\text{\normalfont{ compatible with }} (\mcE;\,\eta,\,\sigma,\,\gamma)\}$.
\end{lemma}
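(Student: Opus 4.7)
My plan is to establish the lemma by showing that the set of $\omega^\tau_v$'s arising from compatible $\tau$'s is exactly $S(\mcE_v \setminus \{\eta(v)\})$; the bijection is then the tautological identification of two equal sets of permutations. This reduces the task to two assertions: first, well-definedness, namely that $\omega^\tau_v$ always belongs to $S(\mcE_v \setminus \{\eta(v)\})$; second, surjectivity, namely that every element of $S(\mcE_v \setminus \{\eta(v)\})$ is of the form $\omega^\tau_v$ for some compatible $\tau$.

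For well-definedness I would split into the cases $\gamma(v) = \pm 1$. When $\gamma(v) = 1$, entry and exit at $v$ coincide at $\eta(v)$; compatibility forces $\tau(\eta(v)) = \eta(\sigma(v)) \notin \mcE_v$ and $\tau^{-1}(\eta(v)) \notin \mcE_v$, so $\tau$ sends $\mcE_v \setminus \{\eta(v)\}$ bijectively onto itself and $\omega^\tau_v$ is simply the restriction of $\tau$. When $\gamma(v) = -1$, compatibility gives $\tau(-\eta(v)) = \eta(\sigma(v)) \notin \mcE_v$ and $\tau^{-1}(\eta(v)) \notin \mcE_v$, so $\tau$ restricts to a bijection from $\mcE_v \setminus \{-\eta(v)\}$ onto $\mcE_v \setminus \{\eta(v)\}$; the short-cut prescribed in~\eqref{eq:def_omega}, which sends $-\eta(v)$ to $\tau(\eta(v))$ while leaving $\tau$ unchanged on $\mcE_v \setminus \{\pm\eta(v)\}$, then produces a genuine permutation of $\mcE_v \setminus \{\eta(v)\}$.

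For surjectivity, given $\pi \in S(\mcE_v \setminus \{\eta(v)\})$ I would construct a compatible $\tau$ realizing $\omega^\tau_v = \pi$. At $v$ I set $\tau(\gamma(v) \eta(v)) = \eta(\sigma(v))$ and $\tau(f) = \pi(f)$ for every $f \in \mcE_v \setminus \{\pm \eta(v)\}$; in the case $\gamma(v) = -1$ I additionally set $\tau(\eta(v)) = \pi(-\eta(v))$. At every other vertex $w \neq v$ one has freedom: fix $\tau(\gamma(w)\eta(w)) = \eta(\sigma(w))$ and extend $\tau$ on the remaining edges of $\mcE_w$ to any bijection onto $\mcE_w \setminus \{\eta(w)\}$. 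Direct substitution into~\eqref{eq:def_omega} then verifies $\omega^\tau_v = \pi$.

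The main (but routine) obstacle will be verifying that the reconstructed $\tau$ is a bona fide permutation of $\mcE$ and is bare and compatible with $(\mcE;\,\eta,\,\sigma,\,\gamma)$. Bijectivity on each $\mcE_w$ is immediate: the images are $\eta(\sigma(w)) \notin \mcE_w$ together with a bijection onto $\mcE_w \setminus \{\eta(w)\}$, for a total of $|\mcE_w|$ distinct targets. Bareness follows from $\sigma \in S_{\cycl}(V)$ having no fixed points, since each vertex then contributes exactly one crossing pair $(\gamma(w)\eta(w),\,\eta(\sigma(w)))$ to $V_\tau$, yielding $|E_\tau(V)| = |V|$ and a single cycle on $V$. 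The same construction in fact proves the stronger statement that $\tau \mapsto (\omega^\tau_v)_{v \in V}$ is a bijection between compatible $\tau$'s and $\prod_{v \in V} S(\mcE_v \setminus \{\eta(v)\})$, the factorization which is presumably what is needed in the subsequent sum manipulations.
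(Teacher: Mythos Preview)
Your proposal is correct and follows essentially the same approach as the paper: both argue well-definedness of $\omega^\tau_v$ from the definition and then, splitting on the value of $\gamma(v)$, explicitly reconstruct the local action of $\tau$ at $v$ from a given $\omega\in S(\mcE_v\setminus\{\eta(v)\})$. Your write-up is in fact more careful than the paper's, which records the reconstruction for $\gamma(v)=-1$ somewhat tersely; your additional remark that the argument upgrades to a bijection $\tau\mapsto(\omega^\tau_v)_{v\in V}$ onto $\prod_{v\in V}S(\mcE_v\setminus\{\eta(v)\})$ is exactly the form in which the lemma is used in the subsequent computation.
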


See Figure~\ref{fig:lemma47} for an instance of Lemma~\ref{lem:bij_tau}.
\begin{figure}[htb!]
    \centering
    \includegraphics[scale=.8]{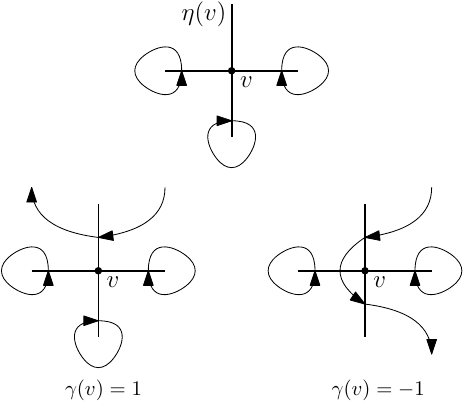}
    \caption{Top: a permutation $\omega\in S(\mcE_{v}\setminus\{\eta(v)\})$. Bottom: there is only one $\tau$ compatible with $\eta,\,\gamma$ that can induce $\omega^\tau_v=\omega$: $\tau$ is depicted on the left for $\gamma(v)=1$ and on the right for $\gamma(v)=-1$.}
    \label{fig:lemma47}
\end{figure}

\begin{lemma}[Surgery of $\tau$]\label{lem:surg_tau}
Fix $v\in V$, $\mcE$, $\eta$, $\sigma$, $\gamma$ as above. Let $\tau$ be compatible with $(\mcE;\,\eta,\,\sigma,\,\gamma)$. Then for $\mcE_v \ni \eta(v)$ we have that
\begin{equation}\label{eq:sgn_tau}
\sign(\tau)=\gamma(v)\sign(\tau\setminus\omega^\tau_v)\sign(\omega_v^\tau)
\end{equation}
and
\begin{equation}\label{eq:surg_M_original}
\prod_{f\in \mcE_v\setminus  \{\gamma\eta(v)\}} \overline M\left(f,\,\tau(f)\right) = \frac{\overline{M}\left(\eta(v),\,\omega^\tau_v(\gamma\eta(v))\right)}{\overline{M}\left(\gamma\eta(v),\,\omega^\tau_v(\gamma\eta(v))\right)} \prod_{f\in \mcE_v\setminus \{\eta(v)\}} \overline M\left(f,\,\omega^\tau_v(f)\right).
\end{equation}
Equivalently we can write
\begin{equation}\label{eq:surg_M}
\prod_{f\in \mcE_v\setminus  \{\gamma\eta(v)\}} \overline M\left(f,\,\tau(f)\right) = \prod_{f\in \mcE_v\setminus  \{\eta(v)\}} \overline M^{\gamma}\left(f,\,\omega^\tau_v(f)\right),
\end{equation}
where for any $g \in \mcE_v\setminus\{\eta(v\}$
\[
\overline M^{\gamma}(f,\,g)= \begin{cases} \overline{M}(\eta(v),\,g) & \text{ if } f= \gamma\eta(v), \\ \overline{M}(f,\,g) & \text{ if } f \neq \gamma \eta(v).
\end{cases} 
\]

\end{lemma}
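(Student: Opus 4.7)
The plan is to establish the sign identity~\eqref{eq:sgn_tau} and the product identity~\eqref{eq:surg_M_original} separately, each by splitting into the two cases $\gamma(v) = +1$ and $\gamma(v) = -1$; the surgery is essentially trivial in the first case and introduces a single rewiring in the second. Throughout I will rely on the bare structure of $\tau$, which pins down exactly how $\tau$ enters and exits $\mcE_v$: by the definitions of $\eta$ and $\gamma$, the only preimage under $\tau$ of an edge of $\mcE_v$ lying outside $\mcE_v$ is the exit edge of $\sigma^{-1}(v)$, which maps to the entry edge $\eta(v)$, and the only image under $\tau$ of an edge of $\mcE_v$ lying outside $\mcE_v$ is $\tau(\gamma\eta(v)) = \eta(\sigma(v))$.

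For the sign identity with $\gamma(v) = +1$, one has $\eta(v) = \gamma\eta(v)$, so the domains $\mcE_v \setminus \{\eta(v)\}$ of $\omega^\tau_v$ and $(\mcE \setminus \mcE_v) \cup \{\eta(v)\}$ of $\tau \setminus \omega^\tau_v$ partition $\mcE$. The bare structure above guarantees that $\tau$ preserves each of these two sets and agrees with the corresponding restrictions of $\omega^\tau_v$ and $\tau \setminus \omega^\tau_v$; extending each by the identity and multiplying yields $\tau$ as a product of two permutations with disjoint supports, so $\sign(\tau) = \sign(\omega^\tau_v)\sign(\tau \setminus \omega^\tau_v)$, which is the claimed identity since $\gamma(v) = 1$. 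For $\gamma(v) = -1$, define $\tau' \coloneqq \omega^\tau_v \cdot (\tau \setminus \omega^\tau_v)$, again extending each factor by the identity on all of $\mcE$. A direct comparison using~\eqref{eq:def_omega} and~\eqref{eq:def:tau_minus} shows that $\tau$ and $\tau'$ agree on every edge except $\eta(v)$ and $-\eta(v)$, where their values are swapped: $\tau'(\eta(v)) = \eta(\sigma(v)) = \tau(-\eta(v))$ and $\tau'(-\eta(v)) = \tau(\eta(v))$. Hence $\tau = \tau' \circ (\eta(v),\,-\eta(v))$, and picking up the sign of one transposition yields $\sign(\tau) = -\sign(\tau') = \gamma(v)\sign(\omega^\tau_v)\sign(\tau \setminus \omega^\tau_v)$.

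For the product identity I would verify the equivalent reformulation~\eqref{eq:surg_M} by matching factors on each side. When $\gamma(v) = +1$, both sides equal $\prod_{f \in \mcE_v \setminus \{\eta(v)\}} \overline{M}(f,\,\tau(f))$, since $\omega^\tau_v = \tau$ on its domain and the special rule defining $\overline{M}^\gamma$ never triggers. When $\gamma(v) = -1$, on the common subset $\mcE_v \setminus \{\eta(v),\,-\eta(v)\}$ one has $\tau(f) = \omega^\tau_v(f)$ and $\overline{M}^\gamma(f,\cdot) = \overline{M}(f,\cdot)$, so those contributions are identical factor by factor. The remaining factor on the left, at $f = \eta(v)$, contributes $\overline{M}(\eta(v),\,\tau(\eta(v)))$, while the remaining factor on the right, at $f = -\eta(v) = \gamma\eta(v)$, contributes $\overline{M}^\gamma(-\eta(v),\,\omega^\tau_v(-\eta(v))) = \overline{M}(\eta(v),\,\tau(\eta(v)))$ by the very definitions of $\overline{M}^\gamma$ and of $\omega^\tau_v$ at $-\eta(v)$. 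This matches the two sides.

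The main obstacle is organizational rather than conceptual: one must carefully track which edges lie in $\mcE_v$ versus outside, and stay consistent about the roles of $\eta(v)$ as entry edge and $\gamma\eta(v)$ as exit edge. Once the bare structure of $\tau$ at $v$ is correctly unpacked, both identities follow from direct case analysis, and the sign factor $\gamma(v)$ appears precisely as the single transposition needed to repair the rewiring when the entry and exit edges are distinct.
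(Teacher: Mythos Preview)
Your proposal is correct and follows essentially the same approach as the paper: both arguments dispose of the case $\gamma(v)=+1$ as trivial (where $\tau$ factors as $\omega^\tau_v\cdot(\tau\setminus\omega^\tau_v)$ with disjoint supports), and in the case $\gamma(v)=-1$ both exhibit $\tau$ and the product $\omega^\tau_v\cdot(\tau\setminus\omega^\tau_v)$ as differing by a single transposition, while verifying the product identity by matching factors using $\omega^\tau_v(-\eta(v))=\tau(\eta(v))$. Your write-up is in fact slightly more explicit than the paper's (you prove~\eqref{eq:surg_M} directly rather than deducing it from~\eqref{eq:surg_M_original}), but the underlying idea is identical.
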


\noindent
Note that $\overline{M}^{\gamma}$ is not necessarily a symmetric matrix anymore.
In Figure~\ref{fig:example_omega_complete}, we can see an example in $d=2$ of the surgery from $\tau$ to $\omega_{v_2}^\tau$, $\omega_{v_1}^\tau$, $\omega_{v_3}^\tau$ and $((\tau\setminus\omega_{v_1}^\tau)\setminus\omega_{v_2}^\tau)\setminus\omega_{v_3}^\tau$. 
\begin{figure}[ht!]
    \centering
    \includegraphics[scale=.7]{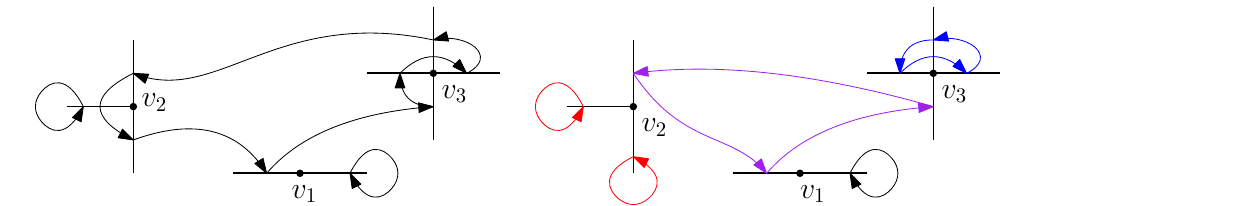}
    \caption{Example of a permutation $\tau$ on the left and its respective $\omega_{v_1}^\tau$ (in black), $\omega_{v_2}^\tau$ (in red), $\omega_{v_3}^\tau$ (in blue) and $((\tau\setminus\omega_{v_1}^\tau)\setminus\omega_{v_2}^\tau)\setminus\omega_{v_3}^\tau$ (in purple) on the right, in $d=2$.}
    \label{fig:example_omega_complete}
\end{figure}

We will now use these lemmas to rewrite~\eqref{eq:at_constant} in a more compact form.
Using~\eqref{eq:sgn_tau} recursively, we get
\[
\sign(\tau) = \left(\prod_{v\in V}\gamma(v)\sign(\omega^\tau_v)\right) \sign((((\tau\setminus\omega^\tau_{v_1})\setminus\omega^\tau_{v_2})\setminus\ldots)\setminus\omega^\tau_{v_n}).
\]
Note that the permutation $ (((\tau\setminus\omega^\tau_{v_1})\setminus\omega^\tau_{v_2})\setminus\ldots)\setminus\omega^\tau_{v_n}$ equals the permutation 
\[
(\eta(v_1),\,\eta(\sigma(v_1)),\,\eta(\sigma(\sigma(v_1))),\,\ldots,\,\eta(\sigma^{n-1}(v_1)))
\]
and, as such, it constitutes a cyclic permutation on $n$ edges in $\mcE$, so that
\[
\frac{\sign((((\tau\setminus\omega^\tau_{v_1})\setminus\omega^\tau_{v_2})\setminus\ldots)\setminus\omega^\tau_{v_n})}{(-1)^{n-1}}=1.
\]
With this in mind, applying~\eqref{eq:surg_M} at every $v$ we can rewrite~\eqref{eq:at_constant} as
\begin{align*}%\label{eq:make_const}
\sum_{\mcE:\, \mcE_v\ni \eta(v)\,\forall v }  K(\mcE) \sum_{\gamma} \sum_{\tau\in S^*_{\bare}(\mcE;\,\eta,\,\sigma,\,\gamma)} \left(\prod_{v\in V} \frac{1}{2d} \gamma^2(v) \sign(\omega^\tau_v)\right)  \prod_{v\in V} \1_{\{\mcE_v \ni \gamma\eta(v)\}}  \prod_{f\in \mcE_v\setminus \{\eta(v)\}} \overline M^{\gamma}\left(f,\,\tau(f)\right).
\end{align*}
Furthermore, recall that, given $\gamma(v)$, $\omega_v^\tau(\gamma\eta(v))=\tau(\eta(v))$, which means that now the dependence on $\tau$ is only through $\omega_v^\tau$ and $\gamma(v)$. This, together with Lemma~\ref{lem:bij_tau}, allows us to obtain
\begin{align}\label{eq:surg_fac}
\sum_{\mcE:\, \mcE\ni \eta(V) } & K(\mcE) \sum_{\gamma} \sum_{\omega_v\in S(\mcE_{v}\setminus\{\eta(v)\}), \, v\in V}\left(\prod_{v\in V} \frac{1}{2d} \sign(\omega_v)\right)\prod_{v\in V}\1_{\{\mcE_v \ni \gamma\eta(v)\}}  \prod_{f\in \mcE_v\setminus \{\eta(v)\}} \overline M^{\gamma}\left(f,\,\tau(f)\right). 
\end{align}
At this point, we note that the expression above does not depend on $v$, $\sigma$ or $\eta$ anymore.
In fact, as $\omega_v(f)^-=f^{-}=v$, we have that $\overline{M}\left(f,\,\omega_v(f)\right)$ is a constant by definition (see~\eqref{eq:def_bar_M}). 
Therefore, without loss of generality, we can take $v=o$, $\eta(v)=e_1$ to get that \eqref{eq:surg_fac} is equal to the $n$-th power of
\begin{align}
\frac{1}{2d}\sum_{\mcE_o:\, \mcE_o\ni e_1 }K(\mcE_o) \sum_{\gamma\in\{-1,\,1\}} &\left[\1_{\{\gamma=1\}} \sum_{\omega\in S(\mcE_o\setminus\{e_1\})}\sign(\omega) \prod_{f\in \mcE_o\setminus  \{e_1\}} \overline {M}^{\gamma}\left(f,\,\omega(f)\right)\right.\nonumber \\
&\left.+\,\1_{\{\gamma=-1\}}\1_{\{\mcE_o\ni -e_1\}} \sum_{\omega\in S(\mcE_o\setminus\{e_1\})}\sign(\omega)\, \prod_{f\in \mcE_o\setminus  \{e_1\}} \overline M^{\gamma}\left(f,\,\omega(f)\right) \right]. \label{eq:not_v_2}
\end{align}
Using the definition of determinant of a matrix, after applying the sum on $\gamma\in\{-1,\,1\}$ the first term in the square brackets above is equal to $\det\left(\overline M\right)$, while for $\gamma=-1$ the second one yields $\1_{\{\mcE_o\ni -e_1\}} \det\big(\overline M'\big)$, with $\overline M'$ as in~\eqref{eq:Mprime}. Summing these contributions we obtain
\begin{equation}\label{eq:final_k}
 \tilde\kappa_2(v_1,\,\dots,\,v_n)= - (C_d)^n \sum_{\sigma\in S_{\cycl}(V)} \sum_{\eta:\,V\to \{e_1,\,\ldots,\,e_d\}} \prod_{v\in V} \partial_{\eta(v)}^{(1)}\partial_{\eta(\sigma(v))}^{(2)} g_U\left(v,\, \sigma(v)\right),
\end{equation}
with
\begin{equation}\label{eq:Cd}
    C_d = \frac{1}{d} \sum_{\mcE_o:\, \mcE_o\ni e_1} (-1)^{|\mcE_o|}|\mcE_o| \left[ \det\left(\overline M\right)_{\mcE_o\setminus{\{e_1\}}} + \1_{\{\mcE_o\ni-e_1\}} \det\big(\overline M'\big)_{\mcE_o\setminus{\{e_1\}}}\right],
\end{equation}
where the factor $1/2$ was canceled from~\eqref{eq:not_v_2} since we now take in~\eqref{eq:final_k} the sum over $d$ directions $\eta$, introducing a multiplicity of $2$ for each point $v\in V$. Using~\eqref{eq:def_bar_M} this concludes the proof of Theorem~\ref{thm:main_cum3_cont}.\qedhere
\end{enumerate}
In the final part of the section we give the proofs for Lemmas~\ref{lem:bij_tau} and \ref{lem:surg_tau}.
\label{ref:pg_proof}
\begin{proof}[Proof of Lemma~\ref{lem:bij_tau}]
The fact that every $\tau$ induces a permutation on $\mcE_{v}\setminus\{\eta(v)\}$ follows from the definition~\eqref{eq:def_omega}.
For the converse, consider a permutation $\omega\in S(\mcE_{v}\setminus\{\eta(v)\})$. Using the fact that $\gamma$ is fixed in our assumptions, we can reconstruct $\tau$ locally, and in turn $\omega^\tau_v$, according to the values of $\gamma$. If $\gamma(v)=1$ then the only $\tau$ which satisfies $\omega_v^\tau=\omega$ in $\mcE_{v}\setminus\{\eta(v)\}$ is $\tau(f)=\omega(f)$. If instead $\gamma(v)=-1$, the only $\tau$ which satisfies $\omega_v^\tau=\omega$ is $\tau(f)=\omega(f)$ for $f\in \mcE_{v}\setminus\{\eta(v)\}$ and $\tau(\eta(v))=\omega(\gamma\eta(v)).$
\end{proof}
\begin{proof}[Proof of Lemma~\ref{lem:surg_tau}]
We will assume without loss of generality that $\gamma = -1$, as for $\gamma=1$ we have trivially that $\tau=\omega_v^\tau\circ(\tau\setminus \omega_v^\tau)$. For $\eta(v) \neq \gamma\eta(v)$, we expand the left-hand side of \eqref{eq:surg_M_original} to get
\begin{align*}
\prod_{f\in \mcE_v\setminus  \{\gamma\eta(v)\}} \overline M\left(f,\,\tau(f)\right) 
& = 
\overline M\left(\eta(v),\,\tau(\eta(v))\right) 
\prod_{f\in \mcE_v\setminus  \{\gamma\eta(v),\, \eta(v)\}} \overline{M} \left(f,\,\tau(f)\right) 
\\&  = 
\overline{M}\left(\eta(v),\,\omega^\tau_v(\gamma\eta(v))\right) \prod_{f\in \mcE_v\setminus \{\eta(v),\, \gamma\eta(v)\}} \overline M\left(f,\,\omega^\tau_v(f)\right)
\\&  = 
\frac{\overline{M}\left(\eta(v),\,\omega^\tau_v(\gamma\eta(v))\right)}{\overline{M}\left(\gamma\eta(v),\,\omega^\tau_v(\gamma\eta(v))\right)} \prod_{f\in \mcE_v\setminus \{\eta(v)\}} \overline M\left(f,\,\omega^\tau_v(f)\right).
\end{align*}
In the second line we used that $\omega^\tau_v (\gamma\eta(v))=\tau(\eta(v))$ and $\tau(f)=\omega^\tau_v(f)$ for all $f \in \mathcal{E}_v \setminus \{\eta(v)\}$. 
Expression \eqref{eq:surg_M} follows from \eqref{eq:surg_M_original} and the definition of $\overline{M}^\gamma$.

As for the signs, the permutation $(\tau\setminus \omega^\tau_v)\circ \omega^\tau_v$ can be written in terms of the decomposition (in transpositions) of $\tau$ by suppressing the transposition $(\eta(v),\, \tau(-\eta(v))$.
Therefore, their parities differ by a negative sign, and we get $\sign(\tau)=-\sign(\tau\setminus\omega^\tau_v)\sign(\omega_v^\tau)$.
\qedhere
\end{proof}

\subsection{Proof of Theorem \ref{thm:scaling}}
From \citet[Lemma 5]{cipriani2022properties} we know that
\begin{equation}\label{eq:recall_G}
    \left\lvert\nabla^{(1)}_{\delta_1} \nabla^{(2)}_{\delta_2} G_{U_\eps}(v_\eps,\,w_\eps)\right\rvert \leq C\cdot
    \begin{dcases}
        |v_\eps-w_\eps|^{-d} \ & \text{if } v_\eps\neq w_\eps , \\
        \hfil 1 & \text{if } v_\eps = w_\eps ,
    \end{dcases}
\end{equation}
for some $C=C(D)>0$, any $\delta_1,\,\delta_2 \in \mathcal E$ and $v_\eps,\,w_\eps\in D_\eps$, with $D\subseteq U$ such that $\dist (D,\,\partial U) > 0$. The functions $f_1,\,\dots,\,f_n$ are bounded and have disjoint and compact supports, respectively referred to as $D_1,\,\dots,\,D_n$. Let
\[
\mathfrak D\coloneqq\min_{i\neq j\in[n]}\dist(D_i,\,D_j)>0,\qquad \mathfrak F\coloneqq\max_{i\in[n]} (\sup_{x\in D}\|f_i(x)\| )<\infty.
\]
Now, the integrand in~\eqref{eq:thm:scaling}, namely
\[
    \tilde\kappa_1\left(\floor{x_1/\eps},\,\dots,\,\floor{x_n/\eps}\right) \prod_{i\in[n]} f_i(x_i) ,
\]
can be bounded by~\eqref{eq:recall_G} by some constant multiple of

\begin{equation*}
   \left( \max_{\substack{x_1\in D_1, \,\ldots,\,x_n\in D_n}} \big\lvert\floor{x_n/\eps} - \floor{x_1/\eps}\big\rvert^{-d} \prod_{i=1}^{n-1} \big\lvert\floor{x_i/\eps} - \floor{x_{i+1}/\eps}\big\rvert^{-d}\right)\mathfrak{F}^n \le\eps^{dn}\mathfrak{D}^{-dn}\mathfrak{F}^n
\end{equation*}
whenever $\eps$ is small enough (so that $\floor{x_i/\eps} \neq \floor{x_j/\eps}$ for all $i\neq j$). Using dominated convergence we can introduce the limit inside the integral in the left-hand side of~\eqref{eq:thm:scaling}, obtaining the desired result from~\eqref{eq:cum_limit}.
\qedhere

\section{Towards universality}\label{sec:towards}

In this section, we will discuss generalizations of our results regarding the cumulants and the scaling limits for the height-one and degree fields to other lattices. This indicates universal behavior of those two fields. In particular, we will prove the analogous results of Corollary~\ref{cor:degree_field} and Theorem~\ref{thm:main_cum3_discrete} for the triangular lattice. 

We will mostly focus on the differences of the proofs and discuss the key assumptions that we believe to be sufficient to extend our results to certain general families of graphs. In particular, all these assumptions also apply to the hexagonal lattice. This will be further discussed in Section~\ref{subsec:general_graph}.

\subsection{Triangular lattice}

Let us first define the coordinate directions of the triangular lattice, that is,
\begin{equation*}
	\tilde{e}_j=\left(\cos\left(\frac{\pi (j-1)}{3}\right),\, \sin\left(\frac{\pi (j-1)}{3}\right)\right), \quad j = 1,\,\dots,\,6.
\end{equation*}
Notice that $\tilde{e}_{j+3}=-\tilde{e}_j$ for $j = 1,\,2,\,3$.
We also consider the set of directions
\begin{equation*}
	\tilde{E}_o\coloneqq \{\tilde{e}_j:\, j =1,\,\dots,\,6\}.
\end{equation*}
Similarly to the hypercubic lattice case, we will use $\tilde{E}_o$ to denote the directed edges leaving the origin as well as the undirected edges containing the origin, or simply the unit vectors in their respective directions.
The set $\tilde{E}_v$ will be analogously defined as the set of edges incident to a site $v$ and $\tilde{E}(V) \coloneqq \cup_{v \in V} \tilde{E}_v$. 

The triangular lattice in dimension $2$ (see Figure~\ref{fig:triangular_lattice}) is then given by
\begin{equation*}
	\mathbf{T} \coloneqq \left\{ a_1 \tilde{e}_1 +a_2 \tilde{e}_2 :\, a_1,\,a_2 \in \mathbb{Z}\right\}.
\end{equation*}
\begin{figure}[ht!]
    \centering
    \includegraphics[scale=.8]{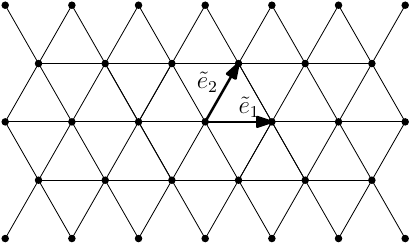}
    \caption{Triangular lattice and its spanning vectors $\tilde e_1$ and $\tilde e_2$.}
    \label{fig:triangular_lattice}
\end{figure}

For any finite connected set $\Lambda \subseteq \mathbf{T}$ we can define the discrete Laplacian $\Delta_\Lambda$ analogously to \eqref{eq:Delta_Lam}, with $\Delta_\Lambda(u, v) = -6$ if $u=v$, for $u,\,v \in \Lambda$.
Likewise, we can define the Green's function (or potential kernel) via Definition~\ref{def:Green-function-discrete}.
We can also extend the notion of good set of points Definition~\ref{def:goodset} to subsets of the triangular lattice by simply using its graph distance, and we can define both the UST and the ASM in $\Lambda \subseteq \mathbf{T}$, noticing that the burning algorithm and Lemma~\ref{lem:height1} still hold in this lattice. The fGFF with Dirichlet boundary conditions can be defined again by simply using the graph Laplacian of $\Lambda \subseteq \mathbf{T}$. Trivially, Lemma~\ref{lem:equiv-fggs} still holds in this setting.

The fermionic observables can be taken as
\begin{equation*}
	X_v:= 
    \frac{1}{\deg_{\mathbf{T}}(v) }\sum_{e:\, e^-=v}\nabla_{e}\psi(v)\nabla_{e}\bpsi(v),\quad v\in \Lambda,
\end{equation*}
and
\begin{equation*}
	Y_v\coloneqq \prod_{e:\, e^-=v}\left(1-\nabla_{e}\psi(v)\nabla_{e}\bpsi(v)\right),\quad v\in \Lambda.
\end{equation*}
It follows that if the set $V \subseteq \Lambda \subseteq \mathbf{T}$ is good, then trivially Equation~\eqref{eq:ferm-degree} and Theorem~\ref{thm:height-fgff} (with Dirichlet boundary conditions, rather than pinned) still hold. In fact, Theorem~\ref{thm:height-fgff} is valid for any finite subset of a translation invariant graph.

We will now state the results for cumulants of the degree field and the height-one field on the triangular lattice $\mathbf{T}$. Recall the definition of the average degree field of the UST in Corollary~\ref{cor:degree_field}. 

\begin{theorem}\label{thm:trig-discrete}$ $
\begin{enumerate}
\item\label{item1_cumtri_dist} Let $n \in \mathbb{N}$ and let the set of points $V\coloneqq \left\{v_1,\,\dots,\,v_n\right\}\subseteq \Lambda$ be a good set.
The  joint cumulants of the average degree field of the UST with wired boundary conditions $(\mathcal{X}_v)_{v\in V}$ are given by %
\begin{align}\label{eq:trig-UST-disc} 
	\nonumber
	\kappa_\Lambda^{\mathbf 0}\left({\mathcal{X}}_v:\,v\in V\right) 
& = 
	\kappa_\Lambda^{\mathbf 0}\left(X_v:\,v\in V\right)
\\	&=
  - \left (\frac{-1}{6} \right )^n\sum_{\sigma\in S_{\cycl}(V)} \sum_{\eta:\, V\to \tilde{E}_o} \prod_{v\in V} \nabla_{\eta(v)}^{(1)}\nabla_{\eta(\sigma(v))}^{(2)} G_\Lambda\left(v,\, \sigma(v)\right).
\end{align}

\item\label{item2_cumtri_dist} Let $n\geq 1$, $V\coloneqq \left\{v_1,\,\dots,\,v_n\right\}\subseteq \Lambda^{\mathrm{in}}$ be given. For a set of edges $\mcE\subseteq \tilde{E}(V)$ and $v\in V$ denote $\mcE_v\coloneqq \{f\in\mcE:\,f^- = v\}\subseteq \tilde{E}_v$. The $n$-th joint cumulants of the field $(X_vY_v)_{v\in V}$ are given by
\begin{align}\label{eq:trig-ASM-disc}
    \kappa_\Lambda\left(h_\Lambda(v):\,v\in V\right) 
	&= 
    \kappa_\Lambda^{\mathbf 0}\left(X_v Y_v:\,v\in V\right) 
	\\&= 
	\left(\frac{-1}{6}\right)^n
	\sum_{\mcE\subseteq \tilde{E}(V):\, |\mcE_v|\ge 1 \ \forall v} K(\mcE) 
	\sum_{\tau\in S_{\co}(\mcE)} \sign(\tau) \prod_{f\in \mcE} M\left(f,\,\tau(f)\right)
\end{align}
where $K(\mcE)\coloneqq\prod_{v\in V}K(\mcE_v)$ and $K(\mcE_v)\coloneqq (-1)^{|\mcE_v|}|\mcE_v|$.
\end{enumerate}
\end{theorem}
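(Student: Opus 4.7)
The plan is to mirror, on the triangular lattice, the proofs of Corollary~\ref{cor:degree_field} and Theorem~\ref{thm:main_cum2} item~\ref{item1_cum2} for item~\ref{item1_cumtri_dist}, and those of Theorem~\ref{thm:height-fgff} and Theorem~\ref{thm:main_cum3_discrete} for item~\ref{item2_cumtri_dist}. These proofs rely on four ingredients that are either lattice-agnostic or adapt straightforwardly: the determinantal expression for fGFF moments of gradient products (Lemma~\ref{lem:joint-moments-grad}, derived from Wick's Theorem~\ref{thm:sportiello}); the UST-fGFF link (Proposition~\ref{thm:swan_UST_fGFF}); the ASM-UST link via the burning algorithm (Lemma~\ref{lem:height1}) together with the fermionic representation of Theorem~\ref{thm:height-fgff}; the equivalence of pinned and Dirichlet states on interior observables (Lemma~\ref{lem:equiv-fggs}); and the Stirling-number collapse~\eqref{eq:magic-stirling}. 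Under the substitutions $2d \to 6$ and $E_o \to \tilde{E}_o$, each of these carries over to $\mathbf{T}$ unchanged, so the task is essentially bookkeeping.

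For item~\ref{item1_cumtri_dist}, I would first invoke Proposition~\ref{thm:swan_UST_fGFF}---whose proof uses only the Matrix-Tree theorem and Lemma~\ref{lem:joint-moments-grad}, both valid on any finite graph---to obtain $\kappa^{\mathbf 0}_\Lambda(\mathcal X_v:v\in V) = \kappa^{\mathbf 0}_\Lambda(X_v:v\in V)$, exactly as in Corollary~\ref{cor:degree_field}. I would then expand $\Efd{\prod_{v\in B} X_v}_\Lambda$ along the lines of~\eqref{eq:alternative}, replacing $1/(2d)^{|B|}$ by $1/6^{|B|}$ and letting $\eta$ range over maps $B\to \tilde{E}(B)$ with $\eta(v)\in \tilde{E}_v$. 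Plugging this into Definition~\ref{def:cum_mult_Gras}, expanding each $\det(M)_{\eta(B)}$ as a sum over permutations, and swapping the sums over partitions and permutations, one arrives via~\eqref{eq:magic-stirling} at a sum supported on cyclic permutations $\sigma \in S_{\cycl}(V)$, which is precisely the right-hand side of~\eqref{eq:trig-UST-disc}.

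For item~\ref{item2_cumtri_dist}, the first equality $\mathbb E[\prod_v h_\Lambda(v)] = \Efd{\prod_v X_v Y_v}_\Lambda$ is immediate from the triangular-lattice version of Theorem~\ref{thm:height-fgff}: as the excerpt records, Lemma~\ref{lem:height1} and the inclusion-exclusion expansion leading to~\eqref{eq:expansion-XvYv} are valid on any translation-invariant graph, so the burning-algorithm proof transfers verbatim. For the second equality I would replicate the proof of Theorem~\ref{thm:main_cum3_discrete}: expand $\Efd{\prod_{v\in B} X_v Y_v}_\Lambda$ as in~\eqref{eq:alternative2}, convert each resulting fGFF state into $\det(M)_{\eta(B)\cup A}$ through Lemma~\ref{lem:joint-moments-grad}, reindex the pair $(\eta,A)$ as a single edge set $\mcE \subseteq \tilde{E}(V)$ with multiplicity $\prod_v |\mcE_v|$, and collapse the partition sum to a sum over connected permutations via~\eqref{eq:magic-stirling}.

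I do not foresee a substantive obstacle: the argument is essentially a transcription of the $\mathbb{Z}^d$ proofs once $E_o$ is replaced by $\tilde{E}_o$ and the degree $2d$ by $6$. The most delicate step, if any, is to check that signs and normalization constants track correctly through the substitution---in particular that the bijection between pairs $(\eta,A)$ and edge sets $\mcE$ in the height-one case still yields multiplicity $\prod_v|\mcE_v|$ despite the loss of the coordinate-axis orthogonality of $\mathbb{Z}^d$. Both are routine verifications rather than new conceptual steps, which is why the proof of Theorem~\ref{thm:trig-discrete} should in fact be shorter than those of its $\mathbb{Z}^d$ counterparts.
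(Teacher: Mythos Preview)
Your proposal is correct and matches the paper's approach exactly: the paper simply states that the proof of Theorem~\ref{thm:trig-discrete} ``follows in an analogous way to the proof of Corollary~\ref{cor:degree_field}, item~\ref{item1_corDF} and the proof of Theorem~\ref{thm:main_cum3_discrete}, hence we will skip it.'' Your outline spells out precisely this transcription, and the one point you flag as delicate---the multiplicity $\prod_v|\mcE_v|$ in the $(\eta,A)\mapsto\mcE$ reindexing---does not use orthogonality at all, only that $\eta(v)$ is chosen from $\mcE_v$.
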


\noindent
In the above, we use the notation $\nabla_{\tilde{e}_j } f(v) \coloneqq f(v+\tilde{e}_j )-f(v)$ for $v \in \mathbf{T}$ and $j \in \{1,\,\dots,\,6\}$. 
The proof of Theorem~\ref{thm:trig-discrete} follows in an analogous way to the proof of Corollary~\ref{cor:degree_field}, item~\ref{item1_corDF} and the proof of Theorem~\ref{thm:main_cum3_discrete}, hence we will skip it. In the following, we will compute the scaling limits.

\begin{theorem}\label{thm:trig-cont} Let $n \ge 2$ and $V \coloneqq \{v_1,\,\dots,\,v_n\} \subseteq U$ a good set such $\dist(V,\,\partial U) >0$, where $U\subset \R^2$ is smooth, connected and bounded. 
\begin{enumerate}
\item\label{item1_limit_triang} For the average degree field $(\mathcal{X}_v)_{v\in \Lambda}$ of the UST, we have that
\begin{align}\label{eq:cum_limit_trig}
	\lim_{\eps\to 0} \eps^{-2n} \kappa_\Lambda^{\mathbf 0}\left(\mathcal{X}_v^\eps:\,v\in V\right) =  
	- \left(-\frac{1}{2}\right)^n  \sum_{\sigma\in S_{\cycl}(V)} \sum_{\eta:\,V\to \{e_1,\,e_2\}} \prod_{v\in V} \partial_{\eta(v)}^{(1)}\partial_{\eta(\sigma(v))}^{(2)} g_U\left(v,\, \sigma(v)\right).
\end{align}

\item\label{item2_limit_triang} For the ASM height-one field $(h^\varepsilon_v)_{v\in \Lambda}$, we have that 
\begin{align*}
    \lim_{\eps\to 0} \eps^{-2n} \kappa_\Lambda\left(h_v^\eps:\,v\in V\right) 
& =
    \lim_{\eps\to 0} \eps^{-2n} \kappa_\Lambda^{\mathbf 0}\left(X_v^\eps \, Y_v^\eps:\,v\in V\right) 
\\& = - (C_\mathbf{T})^n \sum_{\sigma\in S_{\cycl}(V)} \sum_{\eta:\,V\to \{e_1,\,e_2\}} \prod_{v\in V} \partial_{\eta(v)}^{(1)}\partial_{\eta(\sigma(v))}^{(2)} g_U\left(v,\, \sigma(v)\right),
\end{align*}
with
\begin{equation}\label{eq:const_triang}
    C_\mathbf{T} = -\frac{25}{36} + \frac{162}{\pi^4} - \frac{99 \sqrt{3}}{\pi^3} + \frac{99}{2 \pi^2} - \frac{5}{4\sqrt{3} \pi} \approx 0.2241 .
\end{equation}
\end{enumerate}
\end{theorem}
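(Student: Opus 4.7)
The plan is to follow the same templates used on $\Z^d$: the proof of Theorem~\ref{thm:main_cum2} item~\ref{item2_cum2} for item~\ref{item1_limit_triang}, and the proof of Theorem~\ref{thm:main_cum3_cont} for item~\ref{item2_limit_triang}, starting from the discrete identities~\eqref{eq:trig-UST-disc} and~\eqref{eq:trig-ASM-disc}. Two lattice-specific ingredients are required. The first is the analog of Lemma~\ref{lem:cip4} on $\mathbf{T}$, namely
\[
    \lim_{\eps\to 0} \eps^{-2} \nabla_{\tilde{e}_i}^{(1)} \nabla_{\tilde{e}_j}^{(2)} G_{U_\eps}\left(v_\eps,\,w_\eps\right) = \partial_{\tilde{e}_i}^{(1)} \partial_{\tilde{e}_j}^{(2)} g_U\left(v,\,w\right),
\]
which follows from the same argument as in \citet[Lemma 4]{cipriani2022properties} applied to the triangular-lattice Green's function. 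The second is the reflection symmetry of $G_0$ across every line through the origin spanned by a $\tilde{e}_j$: these exist because $\mathbf{T}$ carries the dihedral symmetry group $D_6$, and they preserve $G_0$, so they can play the role of the coordinate-hyperplane reflections used in Step~\ref{step2} of the proof of Theorem~\ref{thm:main_cum3_cont}.

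For item~\ref{item1_limit_triang} I would apply the above limit termwise in~\eqref{eq:trig-UST-disc} and then reduce the sum over $\eta:\,V \to \tilde{E}_o$ to a sum over $\eta:\,V \to \{e_1,\,e_2\}$ by using the identity
\[
    \sum_{j=1}^{6} \tilde{e}_j \otimes \tilde{e}_j = 3\, I_2,
\]
which is a direct computation. Writing each directional derivative in the canonical basis and observing that each $\eta(v)$ appears as a first-slot derivative in the factor at $v$ and, after the reindexing $w = \sigma(v)$, as a second-slot derivative in the factor at $\sigma^{-1}(v)$, the sum over $\tilde{E}_o$ factorises vertex by vertex and produces a multiplicative factor of $3^n$; combining with the prefactor $(-1/6)^n$ yields the claimed $(-1/2)^n$.

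For item~\ref{item2_limit_triang}, I would replicate Steps~\ref{step1}--\ref{step4} of the proof of Theorem~\ref{thm:main_cum3_cont}. Steps~\ref{step1}, \ref{step3}~and~\ref{step4} rely only on the combinatorics of bare permutations and on the surgery Lemmas~\ref{lem:bij_tau}--\ref{lem:surg_tau}, and therefore carry over verbatim once $G_0$ is replaced by the triangular-lattice potential kernel. Step~\ref{step2} uses the $D_6$-reflections described above to pair each $\tau\in S^*_{\bare}$ whose entry and exit directions at some vertex are non-parallel with a sign-opposite companion $\rho$; consequently only permutations entering and exiting every $v$ through $\pm\eta(v)$ survive the limit. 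The output is an analog of~\eqref{eq:const_C} with $E_o$ replaced by $\tilde{E}_o$,
\[
    C_{\mathbf{T}} = A \sum_{\substack{\mcE_o\subseteq \tilde{E}_o \\ \mcE_o\ni \tilde{e}_1}} (-1)^{|\mcE_o|}\,|\mcE_o| \left[\det\!\left(\overline M\right)_{\mcE_o\setminus\{\tilde{e}_1\}} + \1_{\{\mcE_o\ni -\tilde{e}_1\}} \det\!\left(\overline M'\right)_{\mcE_o\setminus\{\tilde{e}_1\}}\right],
\]
for an explicit prefactor $A$ determined by the same bookkeeping as in the $\Z^d$ case, with $\overline M(f,g) = \nabla_{\eta^*(f)}^{(1)}\nabla_{\eta^*(g)}^{(2)} G_0(o,o)$ the triangular-lattice potential-kernel double gradient.

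The main obstacle is the explicit numerical evaluation of $C_{\mathbf{T}}$ into the closed form~\eqref{eq:const_triang}. This requires the values of $\nabla_{\tilde{e}_i}^{(1)}\nabla_{\tilde{e}_j}^{(2)} G_0(o,o)$ on $\mathbf{T}$, obtained by discrete Fourier inversion of the triangular-lattice Laplacian, and the evaluation of the sum of $2^5$ determinants indexed by subsets $\mcE_o \ni \tilde{e}_1$. The algebraic simplification of the resulting trigonometric integrals into the combination of $\pi^{-2}$, $\pi^{-3}$, $\pi^{-4}$ and $(\sqrt{3}\,\pi)^{-1}$ in~\eqref{eq:const_triang} is mechanical but lengthy; by analogy with $C_2 = \pi\,\mathbb{P}(\rho(o)=1)$ on $\Z^2$, one expects $C_{\mathbf{T}}$ to equal $\pi$ times the height-one probability on $\mathbf{T}$ as computed in \cite{JengPirouxRuelle,RuelleTriang}, which provides a convenient sanity check.
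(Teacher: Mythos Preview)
Your treatment of item~\ref{item1_limit_triang} is correct and coincides with the paper's: both take the termwise limit in~\eqref{eq:trig-UST-disc} via the triangular analog of Lemma~\ref{lem:cip4} and then use the isotropy identity $\sum_{j=1}^6 \tilde e_j\otimes\tilde e_j=3I_2$ (the paper states it as~\eqref{eq:isotropic}) vertex by vertex to convert the prefactor $(-1/6)^n$ into $(-1/2)^n$.

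Item~\ref{item2_limit_triang}, however, contains a genuine gap in your Step~\ref{step2}. On $\Z^d$ the reflection across the entry direction cancels any $\tau$ whose exit edge is orthogonal to the entry edge, because $\cos(\pi/2)=0$; this is what forces the exit to lie in $\{\eta(v),-\eta(v)\}$. On $\mathbf T$ the angle between the entry edge $\eta(v)$ and the exit edge $\eta^\alpha(v)$ is $\alpha\pi/3$ with $\alpha\in\{0,\dots,5\}$, and $\cos(\alpha\pi/3)\in\{\pm 1,\pm\tfrac12\}$ is never zero. The reflection $R_{v,\eta}$ therefore does \emph{not} produce a sign-opposite companion; instead, summing $\tau$ with its reflection gives
\[
\overline M(\eta^\alpha(v),\eta(\sigma(v)))+\overline M(R_{v,\eta}(\eta^\alpha(v)),\eta(\sigma(v)))=2\cos\!\Big(\frac{\alpha\pi}{3}\Big)\,\overline M(\eta(v),\eta(\sigma(v))),
\]
so every exit angle $\alpha$ survives with weight $\gamma_\alpha=\cos(\alpha\pi/3)$. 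Consequently the surgery in Step~\ref{step4} must be carried out for all $\alpha\in\{0,\dots,5\}$, and the constant is
\[
C_{\mathbf T}=\frac12\sum_{\mcE_o\ni\tilde e_1}(-1)^{|\mcE_o|}|\mcE_o|\Big[\det(\overline M)_{\mcE_o\setminus\{\tilde e_1\}}-\sum_{\alpha=1}^{5}\gamma_\alpha\,\1_{\{\mcE_o\ni\tilde e_{1+\alpha}\}}\det(\overline M^\alpha)_{\mcE_o\setminus\{\tilde e_1\}}\Big],
\]
where $\overline M^\alpha$ replaces the row $\tilde e_{1+\alpha}$ by the row $\tilde e_1$ (the analog of~\eqref{eq:Mprime} for each $\alpha$). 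Your proposed formula keeps only $\alpha\in\{0,3\}$, which is exactly the square-lattice reduction ($\gamma_\alpha=\cos(\alpha\pi/2)$ vanishes for $\alpha=1,3$ there) and gives the wrong value on $\mathbf T$. As a side remark, the relation to the height-one probability is $C_{\mathbf T}=\big(\tfrac{1}{18}+\tfrac{1}{\sqrt3\,\pi}\big)^{-1}\mathbb P(\rho(o)=1)$, not $\pi\,\mathbb P(\rho(o)=1)$.
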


\noindent
Notice that the expression in \eqref{eq:cum_limit_trig} for the degree field appears with the same constant as in the square lattice, although this is not the case for the height-one field in~\eqref{eq:const_triang}. Moreover, the sum over the directions of derivation is over $e_1$ and $e_2$, and not $\tilde{e}_1$, $\tilde{e}_2$ and $\tilde{e}_3$. 

\begin{proof}[Proof of Theorem~\ref{thm:trig-cont} item~\ref{item1_limit_triang}]
Using Lemma~\ref{lem:cip4} for the triangular lattice (see \citet[Theorem 1]{kassel2015transfer}) and expression~\eqref{eq:trig-UST-disc}, we have 
\begin{multline*}
\lim_{ \varepsilon \to 0}-\eps^{2n}\left (\frac{-1}{6} \right )^n \sum_{\sigma\in S_{\cycl}(V)} \sum_{\eta:\, V\to \tilde{E}_o} \prod_{v\in V} \nabla_{\eta(v)}^{(1)}\nabla_{\eta(\sigma(v))}^{(2)} G_\Lambda\left(v,\, \sigma(v)\right)
\\
\begin{aligned}[t]
&= 
  -\left (\frac{-1}{6} \right )^n \sum_{\sigma\in S_{\cycl}(V)} \sum_{\eta:\, V\to \tilde{E}_o} \prod_{v\in V} \partial_{\eta(v)}^{(1)}\partial_{\eta(\sigma(v))}^{(2)} g_{U}\left(v,\, \sigma(v)\right)
\\
&
\begin{aligned}[t]
=
  -\left (\frac{-1}{6} \right )^n \sum_{\sigma\in S_{\cycl}(V)}
\sum_{\eta:\,V \setminus \{v_1\} \to \tilde{E}_o }
\left(\prod_{v\in V\setminus\{ v_1,\,\sigma^{-1}(v_1)\}} 
\partial_{\eta(v)}^{(1)}\partial_{\eta(\sigma(v))}^{(2)} g_{U}\left(v,\, \sigma(v)
\right)\right) \times
\\ 
\times
\sum_{\eta(v_1) \in \tilde{E}_o} 
\partial_{\eta(\sigma^{-1}(v_1))}^{(1)}\partial_{\eta(v_1)}^{(2)} g_{U}\left(v_1,\, \sigma(v_1)\right)
\partial_{\eta(v_1)}^{(1)}\partial_{\eta(\sigma(v_1))}^{(2)} g_{U}\left(v_1,\, \sigma(v_1)\right),
\end{aligned}
\end{aligned}
\end{multline*}
where in the last expression we simply isolated the factors that depend on $\eta(v_1)$.
For any two differentiable functions $f_1,\,f_2:\, U \to \mathbb{R}$, and $\eta = (\eta_1,\,\eta_2) = (\cos((j-1)\pi/3),\,\sin((j-1)\pi/3))$ with $j=1,\,\ldots,\,6$, we trivially have that
\begin{align}\label{eq:isotropic}
\sum_{\eta \in \tilde{E}_o} 
\partial_{\eta} f_{1}(x,\,y)
\partial_{\eta} f_{2}(x,\,y)
= 3\sum_{\eta' \in \{e_1,e_2\}} 
\partial_{\eta'} f_{1}(x,\,y)
\partial_{\eta'} f_{2}(x,\,y),
\end{align}
where $\eta'$ is a canonical unit vector in $\R^2$.
By iterating and combining the last two expressions, we are able to change the sum from the directions $\tilde{E}(V)$ to $\{e_1,\,e_2\}$, the usual axis directions.
It follows that
\begin{multline*}
-\left (\frac{-1}{6} \right )^n\sum_{\sigma\in S_{\cycl}(V)} \sum_{\eta:\, V\to \tilde{E}_o} \prod_{v\in V} \partial_{\eta(v)}^{(1)}\partial_{\eta(\sigma(v))}^{(2)} g_{U}\left(v,\, \sigma(v)\right)
\\ = 
-\left (\frac{-1}{2} \right )^n\sum_{\sigma\in S_{\cycl}(V)} \sum_{\eta:\, V\to  \{e_1,\,e_2\}} \prod_{v\in V} \partial_{\eta(v)}^{(1)}\partial_{\eta(\sigma(v))}^{(2)} g_{U}\left(v,\, \sigma(v)\right),
\end{multline*}
which concludes the claim of the first statement.
\end{proof}

\begin{proof}[Proof of Theorem~\ref{thm:trig-cont} item~\ref{item2_limit_triang}] $ $

\begin{enumerate}[wide, labelindent=0pt, label={\bf Step \arabic*.}, ref=Step \arabic*]
\item\label{step1_triang}
Setting $Z_v$ as in the proof of Theorem~\ref{thm:main_cum3_discrete},  Theorem~\ref{thm:trig-discrete} item~\ref{item2_cumtri_dist} yields the expression
\[
	\kappa\left(Z^\eps_v:\, v \in V\right) 
= 
	\left(\frac{-1}{6}\right)^n
	\sum_{\mcE \subseteq \tilde{E}(V):\, |\mcE_{v_\eps}|\ge 1 \; \forall v_\eps} K(\mcE) 
	\sum_{\tau\in S_{\co}(\mcE)} \sign(\tau) \prod_{f\in \mcE} M\left(f,\,\tau(f)\right).
\]
The set $S_{\co}(\mathcal{E})$ is defined analogously to the square lattice case. Once again, using the equivalent of Lemma~\ref{lem:cip4} for the triangular lattice, we get that only bare permutations make contributions to the limiting expression,
that is
\begin{equation}\label{eq:bare_trig} 
	\lim_{\varepsilon \to 0}
	\varepsilon^{-2n}
	\kappa\left(Z^\eps_v:\, v \in V\right) 
	=
	\left(\frac{-1}{6}\right)^n
	\sum_{\mcE:\, |\mcE_{v}|\ge 1 \; \forall v}  K(\mcE) 
	\sum_{\tau\in S_{\bare}(\mcE)} \sign(\tau) \prod_{f\in \mcE} \overline{M}\left(f,\,\tau(f)\right).
\end{equation}
The definitions of $S_{\bare}(\mathcal{E})$ and $\overline{M}$ are also analogous to the square lattice case. We will again abuse notation here by referring to $\mcE$ as edges instead of directions of derivation.

\item\label{step2_triang}
As we will see, this step is more delicate than its square counterpart, as in the triangular lattice  we will have fewer cancellations.

Given $\tau \in S_{\bare}(\mcE)$, fix $v\in V$, and let $\eta(v)=\eta(v,\,\tau)$ be the edge through which $\tau$ enters $v$. Let  $\alpha(v) \in \{0,\,\dots,\,5\}$. We define $\eta^\alpha(v)$ as the edge through which $\tau$ exists $v$, where $\alpha(v){\pi}/{3}$ denotes the angle between the entry and exit edges. Let $\gamma_\alpha(v) \coloneqq \cos \left(\alpha(v){\pi}/{3}\right)$, 
 so that $\gamma_{\alpha}(v) \in \{-1,\,-1/2,\,0,\,1/2,\,1\}$ and overall
 \[
 \langle\eta(v),\,\eta^\alpha(v)\rangle=\|\eta(v)\|\,\|\eta^\alpha(v)\|\,\gamma_\alpha(v).
 \]
As benchmark recall that in the square lattice the angles between entry and exit edges are multiples of $\pi/2$, hence their cosines belong to $\{-1,\,0,\,1\}$.

Define $R_{v,\,\eta}:\, \mathbb{R}^2 \to \mathbb{R}^2$ to be the reflection on the line given by $\eta(v)$. We then define
\[
    \mathcal{E}' \coloneqq R_{v,\,\eta}(\mcE)\coloneqq \left(\bigcup_{v' \neq v} \mcE_{v'}\right) \cup \left\{R_{v,\eta}(e):\, e \in \mcE_v\right\}
\]
and, for $\tau \in S_{\bare}(\mathcal{E})$, define $\rho \in S_{\bare}(\mathcal{E}')$ as
\begin{equation*}
	\rho(e) 	
	=
	\begin{cases}
	\tau(e), &	 \text{ if } e \in \cup_{v' \neq v}\mathcal{E}_{v'},\\
	\tau(\eta^\alpha (v)), &	 \text{ if } e = R_{v,\,\eta}(\eta^\alpha (v)),\\
	R_{v,\,\eta}(\tau(e')), &	 \text{ if } e=R_{v,\,\eta}(e') \text{ for some } e'\in\mcE_v\setminus\{\eta^\alpha(v)\} .
	\end{cases} 
\end{equation*}
See Figure~\ref{fig:reflection_triang} for an example of the reflected permutation  $\rho$.
\begin{figure}[ht!]
    \centering
    \includegraphics[scale=.8]{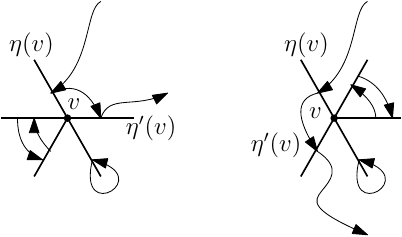}
    \caption{Left: a permutation $\tau$ on $v$. Right: its reflection $\rho$.}
    \label{fig:reflection_triang}
\end{figure}
We can then see that, once again, $K(\mathcal{E})=K(\mathcal{E}')$ and $\sign(\tau)=\sign(\rho)$.
Furthermore, with simple calculations of inner products we have
\begin{equation}\label{eq:cancel-trig-1} 
\overline{M}\left(\eta^\alpha(v),\,\eta(\sigma(v))\right)
+
\overline{M}\left(R_{v,\eta}(\eta^\alpha(v)),\,\eta(\sigma(v))\right)
=
2\cos\left(\frac{\alpha(v) \pi}{3}\right)\,\overline{M}(\eta(v),\,\eta(\sigma(v))).
\end{equation}
This equation will play the role of~\eqref{eq:cancel}, because now~\eqref{eq:bare_trig} is equal to (again we remind of our abuse of notation by using $\eta(v)$ to denote directions)
\begin{multline}\label{eq:cancel-trig-2} 
	\left(\frac{-1}{6}\right)^n
	\sum_{\mcE :\, |\mcE_v|\ge 1 \; \forall v}  K(\mcE)\sum_{\substack{\eta:\,V\to \tilde{E}(V)\\ \eta(v)\in \mcE_v\; \forall v}}\;\sum_{\sigma\in S_{\cycl}(V)} \sum_{\alpha:V\rightarrow \{0,\,\ldots,\,5\}} 
	\sum_{\tau\in S^*_{\bare}(\mcE,\eta,\sigma,\alpha)} \sign(\tau) \times
\\ \times 
\prod_{f\in \mcE \setminus \eta^{\alpha}(V)} \overline{M}\left(f,\,\tau(f)\right) \prod_{v\in V} \gamma_\alpha(v) 	\underbrace{\prod_{v\in V} 
	\partial^{(1)}_{\eta(v)}\partial^{(2)}_{\eta(\sigma(v))}
	g_{U}\left(\eta (v),\,\eta(\sigma(v))\right)}_{(\star)}.
\end{multline}
Here $S^*_{\bare}(\mcE;\,\eta,\,\sigma,\,\alpha)$ is the set of permutations compatible with $\mcE,\,\eta,\,\sigma,\,\alpha$ modulo the equivalence relation $$\tau\sim\rho\iff \rho=R_{v,\,\eta}(\tau).$$
Notice that $\gamma_\alpha(v) \neq 0$ since the corresponding term does not contribute to~\eqref{eq:cancel-trig-2}.

The factor $(\star)$, which accounts for the interactions between different points, only depends on the entry directions given by $\eta$, not on the exit directions $\eta^{\alpha}$.
This is the key cancellation to obtain expressions of the form \eqref{eq:thm:maincum3}, up to constant.

\item\label{step3_triang}

We rewrite expression \eqref{eq:cancel-trig-2} as
\begin{multline}\label{eq:first_lim_triang}
\left(\frac{-1}{6}\right)^n
\sum_{\substack{\eta:\,V\to \tilde{E}(V)\\ \eta(v)\in E_{v}\; \forall v}}\;
\sum_{\sigma\in S_{\cycl}(V)} 
\prod_{v\in V} 
\partial^{(1)}_{\eta(v)}\partial^{(2)}_{\eta(\sigma(v))}
	g_{U}\left(\eta (v),\,\eta(\sigma(v))\right)
\times \\ \times
\underbrace{
	\sum_{\mcE:\, \mcE \supseteq \eta(V)} 
	\sum_{\alpha:\,V\to \{0,\,\dots,\,5\}} 
			 \sum_{\tau\in S^*_{\bare}(\mcE;\,\eta,\,\sigma,\,\alpha)}\sign(\tau) 
			\prod_{v\in V}\left( K(\mcE_{v}) \gamma_\alpha(v) \prod_{f\in\mcE_v\setminus\{\eta^\alpha(v)\}}\overline{M}\left(f,\,\tau(f)\right)\right)}_{(\star\star)}.
\end{multline}
Remark that if $\eta^\alpha(v) \not \in {\mathcal{E}}$, the set  $S^*_{\bare}(\mcE;\,\eta,\,\sigma,\,\alpha)$ is empty, and therefore not contributing to the sum.

Notice that all entries of the type $\overline{M}(e,\,\tau(e))$ in $(\star \star)$ are discrete double gradients of the Green function of the full triangular lattice $\mathbf{T}$ (see Equation \eqref{eq:def_bar_M}).
In the following we will prove that $(\star \star)$ does not depend on the choice of $\eta$ nor $\sigma$.
The value of the term $(\star \star)$ will give the $n$-th power of the constant $C_\mathbf{T}$.

\item\label{step4_triang} 
Following the approach we used in the hypercubic lattice, we once again proceed with ``surgeries'' that will help us evaluate the local constant.

For this, given $\eta:\, V \to \tilde{E}(V)$, $\alpha: V \to \{0,\,\dots,\,5\}$, $\mathcal{E} \subseteq \tilde{E}(V)$ with $\eta(v),\, \eta^\alpha(v)  \in \mathcal{E}_v$, and $\tau \in S^*_{\bare}(\mcE;\,\eta,\,\sigma,\,\alpha)$, we define $\omega_v^\tau (\mathcal{E}_v \setminus \{\eta(v)\})$ and $\tau \setminus \omega^\tau_v ( (\mathcal{E} \setminus \mathcal{E}_v )\cup \{\eta(v)\})$  as

\begin{equation}\label{eq:def_omega_trig}
\omega^\tau_v(f)\coloneqq
\begin{cases}
    \tau(f)&\text{if }f\neq \eta^\alpha(v)\\
    \tau(\eta(v))&\text{if }f= \eta^{\alpha}(v),\,\alpha(v) \neq 0
\end{cases},\quad f\in \mcE_v\setminus\{\eta(v)\}
\end{equation}
and
\begin{equation*}\label{eq:def:tau_minus_trig}
\tau\setminus\omega^\tau_v(f)\coloneqq \begin{cases}
    \tau(f)&\text{if }f\notin \mcE_v\\
    \eta(\sigma(v))&\text{if }f= \eta(v)
\end{cases},\quad f\in (\mcE\setminus \mcE_v) \cup \{\eta(v)\}.
\end{equation*}

An example of $\omega_v^{\tau}$ can be found in Figure~\ref{fig:surgery_triangular}.
\begin{figure}[htb!]
    \centering
    \includegraphics[scale=.8]{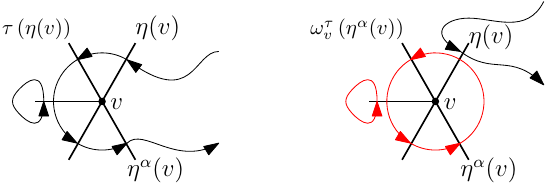}
    \caption{Left: a permutation $\tau$ at the point $v$. Right: the surgery applied to $\tau$, with $\omega^\tau_v$ denoted in red. }
    \label{fig:surgery_triangular}
\end{figure}

In this context, the analogous statement of Lemma~\ref{lem:bij_tau} still holds.
However, there is a subtle difference in the analogous statement of Lemma~\ref{lem:surg_tau}. 

\begin{lemma}[Surgery of $\tau$]\label{lem:surg_tau_trig}
Fix $v\in V$ and $\mcE$, $\eta$, $\sigma$, $\alpha$ as above.
Let $\tau$ be compatible with $\mcE$, $\eta$, $\sigma$ and $\alpha$.
Then 
\begin{equation*}
   \sign(\tau)=(-1)^{\1_{\{\alpha(v) \neq 0\}}}\sign(\tau\setminus\omega^\tau_v(f))\sign(\omega_v^\tau).
\end{equation*}
Furthermore let $\mcE_v$ be such that $\mcE \ni \eta(v)$. Then
\[
\prod_{f\in \mcE_{v}\setminus  \{\eta^\alpha(v)\}} \overline M\left(f,\,\tau(f)\right) = \frac{\overline{M}\left(\eta(v),\,\omega^\tau_v(\eta^\alpha(v))\right)}{\overline{M}\left(\eta^\alpha(v),\,\omega^\tau_v(\eta^\alpha(v))\right)}  \prod_{f\in \mcE_{v}\setminus \{\eta(v)\}} \overline M\left(f,\,\tau(f)\right).
\]
Equivalently, we can write that
\[
\prod_{f\in \mcE_v\setminus \{\eta^\alpha(v)\}} \overline M\left(f,\,\tau(f)\right) = \prod_{f\in \mcE_v\setminus  \{\eta(v)\}} \overline M^{\alpha}\left(f,\,\omega^\tau_v(f)\right),
\]
where for any $g\in \mcE_v$
\begin{equation}\label{eq:defmalp}
\overline M^{\alpha}\left(f,\,g\right) :=\begin{cases}
\overline{M}(\eta(v),\,g) & \text{ if } f=\eta^{\alpha}(v), \\
\overline{M}(f,\,g) & \text{ if } f\neq \eta^{\alpha}(v).
\end{cases}
\end{equation}
\end{lemma}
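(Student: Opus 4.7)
The plan is to mirror the proof of Lemma~\ref{lem:surg_tau}, splitting according to whether $\alpha(v)=0$ (trivial case) or $\alpha(v)\neq 0$ (nontrivial case). When $\alpha(v)=0$, the entry and exit edges at $v$ coincide, so the surgery is trivial: $\omega_v^\tau$ is simply the restriction of $\tau$ to $\mcE_v\setminus\{\eta(v)\}$, and $\tau\setminus\omega_v^\tau$ is just the restriction of $\tau$ to $(\mcE\setminus\mcE_v)\cup\{\eta(v)\}$, using that $\tau(\eta(v))=\eta(\sigma(v))$ in this case. Both halves of the lemma would then follow immediately from the decomposition $\tau=\omega_v^\tau\circ(\tau\setminus\omega_v^\tau)$ on disjoint supports, with the sign factor $(-1)^{\1_{\{\alpha(v)\neq 0\}}}=1$ and $\overline M^\alpha\equiv\overline M$ on the relevant indices.

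For $\alpha(v)\neq 0$, I would establish the product identity by direct expansion: isolate the factor $\overline M(\eta(v),\tau(\eta(v)))$ from the left-hand side, then use the defining relation $\omega_v^\tau(\eta^\alpha(v))=\tau(\eta(v))$ from~\eqref{eq:def_omega_trig} together with the equality $\tau(f)=\omega_v^\tau(f)$ for every $f\in\mcE_v\setminus\{\eta(v),\eta^\alpha(v)\}$ to rewrite the product in terms of $\omega_v^\tau$. Multiplying and dividing by $\overline M(\eta^\alpha(v),\omega_v^\tau(\eta^\alpha(v)))$ would then produce the first displayed form of the identity, and reindexing via~\eqref{eq:defmalp} would deliver the second form, since $\overline M^\alpha$ differs from $\overline M$ only by substituting the row at $\eta(v)$ for the row at $\eta^\alpha(v)$.

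For the sign identity when $\alpha(v)\neq 0$, the key is to inspect the cycle structure of $\tau$ on $\mcE$. The cycle through $\eta(v)$ (which also contains $\eta^\alpha(v)$ by bareness) would be shown to split under the surgery into two disjoint cycles: the local segment $\tau(\eta(v))\to\tau^2(\eta(v))\to\dots\to\eta^\alpha(v)$ closes under $\omega_v^\tau$ via $\omega_v^\tau(\eta^\alpha(v))=\tau(\eta(v))$, while the complementary global portion closes under $\tau\setminus\omega_v^\tau$ via the shortcut $\eta(v)\to\eta(\sigma(v))$. Any auxiliary cycles of $\tau$ contained entirely within some $\mcE_w$ are preserved and redistributed (to $\omega_v^\tau$ if $w=v$, to $\tau\setminus\omega_v^\tau$ otherwise), so the total number of cycles increases by exactly one and the parity flips, yielding $\sign(\tau)=-\sign(\tau\setminus\omega_v^\tau)\sign(\omega_v^\tau)$. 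The main technical obstacle lies precisely in this cycle bookkeeping in the presence of potentially many auxiliary cycles within each $\mcE_v$; structurally, however, the argument is identical to that of Lemma~\ref{lem:surg_tau}, the only novelty being that the binary parameter $\gamma\in\{-1,1\}$ there is replaced here by the indicator $\1_{\{\alpha(v)\neq 0\}}$ collapsing the six exit angles $\alpha(v)\in\{0,\dots,5\}$.
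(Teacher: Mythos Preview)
Your proposal is correct and follows essentially the same approach as the paper, which omits the proof of Lemma~\ref{lem:surg_tau_trig} and simply refers back to Lemma~\ref{lem:surg_tau}. Your treatment of the product identity (isolating $\overline M(\eta(v),\tau(\eta(v)))$, invoking $\omega_v^\tau(\eta^\alpha(v))=\tau(\eta(v))$ and $\tau=\omega_v^\tau$ on $\mcE_v\setminus\{\eta(v),\eta^\alpha(v)\}$, then multiplying and dividing) reproduces the paper's computation verbatim.

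The only stylistic difference is in the sign computation: the paper argues via transpositions (noting that $(\tau\setminus\omega_v^\tau)\circ\omega_v^\tau$ differs from $\tau$ by suppressing a single transposition), whereas you argue via cycle counting (the surgery splits the unique cycle through $\eta(v)$ and $\eta^\alpha(v)$ into two, while preserving all auxiliary cycles). Your observation that $\eta(v)$ and $\eta^\alpha(v)$ necessarily lie in the same $\tau$-cycle---because iterating $\tau$ forward from $\eta(v)$ stays in $\mcE_v$ until the unique exit edge $\eta^\alpha(v)$ is reached---is the key point, and your explicit handling of the auxiliary cycles within each $\mcE_w$ is in fact more careful than the paper's one-line justification. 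The two arguments are equivalent and equally valid.
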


\noindent
Remark that the matrix $\overline{M}^{\alpha}$ is not symmetric anymore. Lemma~\ref{lem:surg_tau_trig} reads almost the same as its hypercubic counterpart Lemma~\ref{lem:surg_tau}, and its proof follows in the same manner, so it will be omitted. Using the translation invariance of $\overline{M}$, and setting $\eta(v)=\tilde{e}_1, \, \eta^{\alpha}(v)=\tilde{e}_{1+\alpha}, \, \omega^{\tau}_v =\omega$, it follows that $(\star\star)$ in \ref{step3_triang}
is equal to the $n$-th power of
\begin{align*}
\frac{1}{6}
\sum_{\mcE_o:\, \mcE_o\ni \tilde e_1 }K(\mcE_o) \sum_{\alpha = 0}^5 \biggl[ &\1_{\{\alpha=0\}} 
\sum_{\omega\in S(\mcE_o\setminus\{\tilde e_1\})}\sign(\omega) \prod_{f\in \mcE_o\setminus \{\tilde{e}_1\}} 
\overline M\left(f,\,\omega(f)\right) \nonumber\\ 
&- \gamma_{\alpha}\1_{\{\alpha \neq 0\}} \1_{\{\mcE_o \ni \tilde{e}_{1+\alpha}\}} \sum_{\omega\in S(\mcE_o\setminus\{\tilde e_1\})}\sign(\omega)\prod_{f\in \mcE_o \setminus \{\tilde{e}_1\}} \overline{M}^{\alpha}(f,\, \omega(f))\biggr].
\end{align*}
Using Equation~\ref{eq:isotropic}, we obtain the cumulants
\begin{equation}
- (C_\mathbf{T})^n \sum_{\sigma\in S_{\cycl}(V)} \sum_{\eta:\,V\to \{e_1,\,e_2\}} \prod_{v\in V} \partial_{\eta(v)}^{(1)}\partial_{\eta(\sigma(v))}^{(2)} g_U\left(v,\, \sigma(v)\right),
\end{equation}
with

\begin{equation}\label{eq:def-Ct}
    C_\mathbf{T} = \frac{1}{2} \sum_{\mcE_o:\, \mcE_o\ni \tilde e_1} (-1)^{|\mcE_o|}|\mcE_o| \left[\det\left(\overline M\right)_{\mcE_o\setminus{\{\tilde e_1\}}} - \sum_{\alpha=1}^5 \gamma_\alpha \1_{\{\mcE_o\ni \tilde{e}_{1+\alpha}\}} \det\big(\overline{M}^\alpha\big)_{\mcE_o\setminus\{\tilde e_1\}} \right].
\end{equation}
Plugging in the values of the potential kernel of the triangular lattice (see e.g. \cite{KenyonWilson} or \cite{RuelleTriang}), this concludes the proof.\qedhere
\end{enumerate}
\end{proof}

\begin{remark}
Note that, for the triangular lattice, we have that
\[
C_\mathbf{T} = \left( \frac{1}{18} + \frac{1}{\sqrt{3}\pi}\right)^{-1} \mathbb{P}(h(o)=1),
\]
where $\mathbb{P}(h(o)=1)$ was computed in \citet[Equation 4.3]{RuelleTriang}.
\end{remark}

\begin{remark}
As a safety check, expression~\eqref{eq:def-Ct} with $\gamma_\alpha = \cos\left(\alpha \pi/2\right)$ and $\alpha\in\{0,\,1,\,2,\,3\}$ yields the same value of the square lattice in $d=2$ as in~\eqref{eq:Cd}. 
\end{remark}

\subsection{General graphs}\label{subsec:general_graph}
\label{subsec:what-about-more-general-graphs-}

A natural question is whether our approach would work on general graphs $\mathcal{G}$ embedded in $\mathbb{R}^d$. 
In this section, we would like to  highlight the key ingredients we needed working on $\mathbb{Z}^d$ or $\mathbf{T}$ to prove our results.

\begin{enumerate}[wide, labelindent=0pt, label={\bf Ingredient \arabic* -- }, ref=Ingredient \arabic*]
\item \textbf{Matrix-Tree Theorem:}
as previously mentioned, Theorem~\ref{thm:height-fgff}, Theorem~\ref{thm:main_cum2} item~\ref{item1_cum2} and Theorem~\ref{thm:main_cum3_discrete} work on any finite graph for which the Matrix-Tree Theorem and the burning algorithm are valid, which includes subsets of any transitive, regular graph with bounded degree.

\item \textbf{Good approximation of the Green's function:}
for the scaling limits, we need that the equivalent of Lemma~\ref{lem:cip4} holds. For example, this is the case for graphs $\mathcal G$ such that the sequence $(\mathcal{G}_\eps)_\eps$ with $\mathcal G_\eps=\eps\mathcal G$ is a ``good approximation of $\mathbb{R}^d$'' in the sense of~\citet[Theorem 1]{kassel2015transfer}. They give a sufficient criterion to obtain such convergence, which in dimension $2$ includes transient isoradial graphs (and therefore the triangular and hexagonal lattices).

\item \textbf{Isotropic neighborhoods:}
we believe that the neighborhood of each vertex needs to be ``isotropic'', in the sense that for $v \in \mathcal{G}$, we need
\begin{equation*}
	\sum_{u:\, u \sim v} (u_i-v_i) (u_j-v_j)
	=
	c_{\mathcal{G}} \, \delta_{i,\,j}, \quad\forall\, i,\,j \in [d],
\end{equation*}
for some constant $c_{\mathcal{G}}$, $v=(v_1,\,\dots,\,v_d)$, $u=(u_1,\,\dots,\,u_d)$, and $\delta_{i,\,j}$ the Kronecker's delta function.
This is needed to substitute \eqref{eq:isotropic} and replace the reflection cancellations used in Lemma~\ref{lem:surg_tau_trig}.

With these ingredients in place, we believe that the scaling limit of the cumulants of the degree field of the UST should have the same form we obtained, up to constants that only depend on $c_{\mathcal{G}}$. The same applies to the height-one of the ASM field. However in this case the global constant in front of the cumulants would also depend on the values of the double discrete derivative of the Green's function on $\mathcal{G}$ in a neighborhood of the origin.
Furthermore, such a constant will be very similar to the expression for $C_\mathbf{T}$ given in \eqref{eq:def-Ct}. 

The reader might have noticed that all the conditions above are satisfied by the hexagonal lattice $\mathbf{H}$, on which the height-one of the ASM has also been studied (see \cite{RuelleTriang}).
The main difficulty for such a lattice is the lack of translation invariance, leading to the set of space ``directions'' depending on the points.
This means that the sum in $\eta$'s, say for example in \eqref{eq:limit_cum_X}, will depend on $\eps$, so that the convergence of the cumulants (either for the UST or the ASM) cannot be done as we perform in this article.
However with minor technical modifications to our proofs we should recover similar results. In fact the UST degree field should have the same global constant $c_{\mathbf H}=-1/2$ as in~\eqref{eq:cum_limit_trig}, whereas for the ASM $C_\mathbf{T}$ in~\eqref{eq:const_triang} should be replaced by $C_{\mathbf{H}}=1/8$.

Finally, we also expect that these results can be extended to other graphs that are not translation-invariant, although an extra site-dependent scaling might be necessary in this case.
\end{enumerate}

\section*{Funding and data availability statement}
LC and AR are funded by the grant OCENW.KLEIN.083 and WR by OCENW.KLEIN.083 and the Vidi grant VI.Vidi.213.112 from the Dutch Research Council. AC acknowledges the hospitality of Utrecht University and AR acknowledges the hospitality of UCL.

We do not analyze or generate any dataset since our work proceeds within a theoretical and mathematical approach.

\bibliographystyle{abbrvnat}
\bibliography{references}

% \printunsrtglossary[title={Notation summary}]

\end{document}